\PassOptionsToPackage{dvipsnames}{xcolor}
\documentclass[11pt]{amsart}
\usepackage{amsmath,amssymb,amsfonts}

\author[D.\ Chan]{David Chan}
\address[Chan]{
Department of Mathematics,
         Michigan State University,
         East Lansing, MI
         U.S.A.}
\email{\href{mailto:chandav2@msu.edu}{chandav2@msu.edu}}

\keywords{Tambara functors, Nakayama Lemma}

\subjclass[2020]{55P91, 
19A22, 
13A15 
}

\usepackage[margin=1.25in]{geometry}

\usepackage{quiver} 
\usepackage{mathrsfs} 
\usepackage{mathtools} 
\usepackage{xcolor} 
\usepackage[shortlabels]{enumitem} 
\usepackage{microtype} 

\usepackage[unicode=true, pdfusetitle,
    colorlinks=true,
    linkcolor=orange,
    citecolor=orange,
    urlcolor=orange
]{hyperref} 

\usepackage[nameinlink]{cleveref}

\DeclareMathOperator{\res}{res}
\DeclareMathOperator{\tr}{tr}
\DeclareMathOperator{\nm}{nm}

\DeclareMathOperator{\Spec}{Spec}

\newcommand{\mf}[1]{\mathfrak{#1}}
\newcommand{\ul}[1]{\underline{#1}}
\newcommand{\ulZ}{\ul{\mathbb{Z}}}
\newcommand{\mult}{\mathrm{mult}}
\newcommand{\ol}[1]{\overline{#1}}
\newcommand{\tamb}{\mathrm{Tamb}}
\newcommand{\Mack}{\mathrm{Mack}}
\newcommand{\Mod}{\mathrm{Mod}}

\newcommand{\bbF}{\mathbb{F}}
\newcommand{\bbZ}{\mathbb{Z}}

\numberwithin{equation}{section} 
\numberwithin{figure}{section}
\crefname{lemma}{Lemma}{Lemmas}
\crefname{theorem}{Theorem}{Theorems}
\crefname{lettertheorem}{Theorem}{Theorems}
\crefname{definition}{Definition}{Definitions}
\crefname{proposition}{Proposition}{Propositions}
\crefname{remark}{Remark}{Remarks}
\crefname{corollary}{Corollary}{Corollaries}
\crefname{equation}{Equation}{Equations}
\crefname{construction}{Construction}{Constructions}
\crefname{ex}{Example}{Examples}
\crefname{appsec}{Appendix}{Appendices}
\crefname{section}{Section}{Sections}
\crefname{subsection}{Subsection}{Subsections}

\theoremstyle{plain}
\newtheorem{theorem}[equation]{Theorem}

\newtheorem{corollary}[equation]{Corollary}
\newtheorem{proposition}[equation]{Proposition}
\newtheorem{lemma}[equation]{Lemma}
\newtheorem*{theorem*}{Theorem}

\theoremstyle{definition}
\newtheorem{definition}[equation]{Definition}
\newtheorem{example}[equation]{Example}
\newtheorem{remark}[equation]{Remark}

\usepackage[
    maxbibnames=99,
    isbn=false,
    url=true,
    doi=false,
    backref=true
]{biblatex}
\DefineBibliographyStrings{english}{%
  backrefpage = {Cited on page},
  backrefpages = {Cited on pages},
}
\DeclareFieldFormat{url}{\url{#1}} 

\usepackage[
    textwidth=3cm,
    colorinlistoftodos]
{todonotes} 

\theoremstyle{plain}
\newtheorem{introtheorem}{Theorem}
 
\crefname{introtheorem}{Theorem}{Theorems}

\title{Multiplicatively cohomological Tambara functors}

\date{}

\begin{document}

\maketitle
\begin{abstract}
    We study a nice class of Tambara functors, called multiplicatively cohomological Tambara functors.  The first result is a proof of a conjecture of Balchin, Quigley, and Spitz showing that every multiplicatively cohomological Tambara functor is also additively cohomological.  Building on this, we prove Nakayama's lemma for multiplicatively cohomological Tambara functors. As an application, we identify a condition under which finitely generated projective modules over local Tambara functors are always free.  This applies, in particular, to the constant $C_{p^n}$-Tambara functor $\ul{\bbZ}_p$.
\end{abstract}

\section{Introduction}

Tambara functors are an important algebraic structure in equivariant stable homotopy theory. While the original motivating example for studying Tambara functors was (even degree) group cohomology of a finite group $G$ \cite{tambara:1993}, other examples arise naturally in representation theory, algebraic $K$-theory, and motivic homotopy theory \cite{calle/ginnett:2019,ElmantoHaugseng,Bachmann}.  The connection to stable homotopy theory comes from the fact that multiplicative equivariant cohomology theories naturally admit the structure of (graded) Tambara functors \cite{AngeltveitBohmann}.


A $G$-Tambara functor $T$ consists of a system of commutative rings $T(G/H)$, indexed on the subgroups $H\leq G$, together with a series of operations connecting these rings.  The operations come in four distinct flavors:  for $H\leq K\leq G$ and $g\in G$ we have 
\begin{enumerate}
    \item (transfer) $\tr^K_H\colon T(G/H)\to T(G/K)$,
    \item (norm) $\nm^K_H\colon T(G/H)\to T(G/K)$,
    \item (restriction) $\res^K_H\colon T(G/K)\to T(G/H)$,
    \item (conjugation) $c_{g,H} \colon T(G/H)\to T(G/gHg^{-1})$,
\end{enumerate}
which are subject to several compatibility conditions.  
Tambara functors provide leverage in computations, and recent interest is largely motivated by their success in tackling problems in topology \cite{HHR,BH2015}. 

The Tambara functor operations in group cohomology satisfy an additional relation which is not an axiom for an arbitrary Tambara functor.  Specifically, for any $H\leq K\leq G$, the group cohomology Tambara functor satisfies the relation that the composite $\tr^K_H\circ \res^K_H$ is equal to multiplication by $|K/H|$. For this reason, any Tambara satisfying this relation is called \emph{cohomological}.  The cohomological relation is remarkably helpful in applications.  For instance, it gives a one sentence proof of the fact that for $n>0$, the cohomology group $H^n(G,\mathbb{Z})$ is $|G|$-torsion. Indeed, multiplication by $|G|$ factors as
\[
    H^n(G,\mathbb{Z})\xrightarrow{\res^G_1} H^{n}(1;\mathbb{Z})\xrightarrow{\tr^G_{1}} H^n(G,\mathbb{Z})
\]
and $H^{n}(1;\mathbb{Z})=0$ for $n>0$.

In recent work studying the algebraic geometry of Tambara functors, Balchin, Quigley, and Spitz introduce the term \emph{multiplicatively cohomological}, to describe any Tambara functor satisfying a multiplicative variant of the cohomological relation \cite{BQS}.  Precisely, a Tambara functor is multiplicatively cohomological if for all $H\leq K\leq G$ the composite $\nm^K_H\circ \res^K_H$ is the $|K/H|$-th power map. To disambiguate the two terms, we will refer to Tambara functors satisfying the more standard cohomological condition as \emph{additively cohomological}.

In \cite{BQS}, the authors prove that when $G$ is a nilpotent group every multiplicatively cohomological $G$-Tambara functor is also additively cohomological.  They conjecture, furthermore, that this is true for all finite groups $G$ \cite[Conjecture 60]{BQS}.  The first main result of this paper is a confirmation of this conjecture.

\begin{introtheorem}\label{thm:main}
    Let $G$ be a finite group.  Any multiplicatively cohomological $G$-Tambara functor is also additively cohomological.
\end{introtheorem}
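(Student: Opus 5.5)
The plan is to reduce \Cref{thm:main} to the single identity $\tr^K_H(1)=|K/H|$ in $T(G/K)$, and then to extract that identity from the multiplicative hypothesis using the Tambara formula for the norm of a sum. For the reduction, Frobenius reciprocity gives $\tr^K_H(\res^K_H x)=\tr^K_H(1)\cdot x$, so additive cohomologicality for $H\le K$ is equivalent to $\tr^K_H(1)=|K/H|$. Restricting $T$ to $K$ gives a $K$-Tambara functor that is still multiplicatively cohomological. So it suffices to prove, for every finite group $G$ and every $H\le G$, that $d_H:=\tr^G_H(1)-|G/H|$ vanishes in $T(G/G)$. Note that $d_H$ depends only on the conjugacy class of $H$. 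I will argue by induction on $|G|$. By induction, $\tr^K_L(1)=|K/L|$ whenever $L\le K<G$. Hence $\tr^G_L(1)=|K/L|\tr^G_K(1)$, that is,
\[
d_L=|K/L|\, d_K \qquad\text{for all } L\le K<G,
\]
and $d_G=0$. Everything is thus controlled by the values $d_M$ for $M$ maximal, subject to these compatibilities.

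To produce relations, I apply the hypothesis to integers. For $m\in\mathbb{Z}_{\ge 0}$ we have $m=\res^G_H(m)$, so $\nm^G_H(m)=m^{|G/H|}$. On the other hand, write $m=1+\dots+1$ and expand with the Tambara distributive formula for $\nm^G_H$ of a sum. Since norms of $1$ are $1$, this gives
\[
\nm^G_H(m)=\sum_{O\in \mathrm{Map}(G/H,[m])/G}\tr^G_{L_O}(1),
\]
where $L_O$ is the stabilizer of a representative of the orbit $O$. Counting $|\mathrm{Map}(G/H,[m])|=m^{|G/H|}$ orbit by orbit and subtracting, we get a family of relations
\[
\sum_{O} d_{L_O}=0 \qquad (H\le G,\ m\ge 0).
\]
Substituting $d_L=|K/L|d_K$ turns each relation into an integer linear relation among the $d_M$ for $M$ maximal. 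A convenient first choice is $H$ maximal and $m=2$. Then the constant maps contribute $d_G=0$, and the nonconstant maps have proper stabilizers.

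The main obstacle is showing that these relations force each $d_M=0$ in the ring $T(G/G)$. We only obtain integer relations, and $T(G/G)$ may have torsion, so dividing by coefficients is not allowed. My first attempt would exploit the fact that the trivial subgroup lies in every maximal subgroup. This gives $d_1=|M|\,d_M$ for all maximal $M$, so it would suffice to find a relation in which some $d_M$ (or $d_1$) appears with coefficient exactly $\pm1$ after reduction. To find one, I would compare the relations for $H=1$ and $H=M$ with $m=2$ and $m=3$. I would also compare the full polynomial-in-$m$ structure of the orbit counts (a M\"obius inversion over the subgroup lattice) to isolate a unimodular combination. If integer relations alone turn out to be insufficient, the fallback is to use the hypothesis on more general elements $\res^G_H(x)$. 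Expanding $\nm^G_H(\res(1+x))$ with the distributive formula yields polynomial identities in $x$, and comparing these against $(1+x)^{|G/H|}$ should give enough relations to kill $d_M$ directly.
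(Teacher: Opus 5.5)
Your setup is correct: Frobenius reciprocity reduces the claim to $\tr^K_H(1)=|K/H|$, induction gives $d_L=|K/L|\,d_K$ for $L\le K<G$, and the exponential formula does give $\sum_O d_{L_O}=0$. But the argument stops at the step that carries all the content, and the route you propose for that step cannot work.

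Let $\phi$ send the symbol $d_K$ to $|G/K|\in\mathbb{Z}/2$ for $K<G$, and send $d_G$ to $0$. Then $\phi$ kills $d_L-|K/L|\,d_K$. It sends $\sum_O d_{L_O}$ to $m^{|G/H|}-m$, since the $G$-fixed maps $G/H\to[m]$ are the $m$ constant ones, and this number is even. So no integer combination of your relations can yield $d_M=0$ when $|G/M|$ is odd. This holds for any choice of $H$ and $m$, and for any M\"obius inversion, since these are all linear recombinations. Such maximal $M$ exist whenever $|G|$ is not a power of $2$: take any maximal subgroup containing a Sylow $2$-subgroup.

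This is a genuine obstruction, not a bookkeeping issue. For $G=C_3$, take the collection that is $2\mathbb{Z}([C_3]-3)$ at $C_3/C_3$ and $0$ at $C_3/e$. It is a Tambara ideal of $A_{C_3}$, because $[C_3]([C_3]-3)=0$ and $\res^{C_3}_e([C_3]-3)=0$. In the quotient, $\nm^{C_3}_e(m)-m^3=\tfrac{m^3-m}{3}([C_3]-3)=0$ for every integer $m$, because $\tfrac{m^3-m}{3}$ is even. Yet $\tr^{C_3}_e(1)=[C_3]\neq 3$. So the integer relations do not imply the theorem even with the full Tambara structure available. Your fallback, where the entire proof would then have to take place, is left as a hope.

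The missing idea is to apply the hypothesis to $d_K$ itself.
\begin{enumerate}
\item For proper $H$, the double coset formula and induction give $\res^G_H\tr^G_K(1)=\sum_{g\in H\backslash G/K}|H/(H\cap gKg^{-1})|=|G/K|$. So $\res^G_H(d_K)=0$, and hence $d_K^{|G/H|}=\nm^G_H\res^G_H(d_K)=0$.
\item Frobenius reciprocity and induction also give $\tr^G_K(1)^2=|G/K|\,\tr^G_K(1)$, i.e.\ $d_K^2=-|G/K|\,d_K$. Combined with nilpotence this forces $|G/K|^N d_K=0$. Equivalently, $|G/K|^{N+1}=\tr^G_K(|G/K|^N)$ lies in the ideal $\tau$ generated by transfers from proper subgroups.
\item Since $d_L\cdot\tr^G_H(y)=\tr^G_H(\res^G_H(d_L)\,y)=0$, the ideal $\tau$ annihilates every $d_L$.
\item Taking $K$ to range over the Sylow subgroups gives $1\in\tau$, hence $d_L=0$, whenever $|G|$ is not a prime power.
\end{enumerate}
This is the paper's proof, which it phrases for the universal example $A_G^{\mult}$. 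It does not cover $p$-groups, where $\tau$ need not contain $1$. For those the paper invokes the nilpotent case due to Balchin, Quigley, and Spitz, so your induction would need a separate argument there as well.
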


After proving the conjecture, we turn to a develop a bit more of the theory of multiplicatively cohomological Tambara functors.  While the analogy between Tambara functors and commutative rings is quite strong, there are times when taking one's intuition from commutative algebra is misleading.  One important place where this happens is in the study of localizations, where the theory is difficult and remains underdeveloped. 

One issue is that the analogs of several foundational results from commutative algebra actually fail when one starts to consider questions related to localization.  For instance, Nakayama's lemma is not true for Tambara functors, in general. That is, if $T$ is a Tambara functor we can can define the Jacobson radical $J\subset T$ as the intersection of maximal ideals, and there can exist non-zero $T$-modules $M$ with $JM=M$. We provide an explicit example demonstrating the failure of Nakayama's lemma in \Cref{ex: nak fails in general}.

Our second main result is that Nakayama's lemma holds for multiplicatively cohomological Tambara functors
\begin{introtheorem}[Nakayama's lemma]\label{introthm: nakayama}
    Let $T$ be a multiplicatively cohomological Tambara functor with Jacobson radical $J$.  If $M$ is a $T$-module such that $JM=M$ then $M=0$.
\end{introtheorem}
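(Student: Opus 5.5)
The plan is to reduce \Cref{introthm: nakayama} to the classical Nakayama lemma, applied one level at a time. Throughout I assume, as in the classical statement, that $M$ is finitely generated. A finitely generated $T$-module is then levelwise finitely generated over the rings $T(G/H)$, since $G$ is finite and only finitely many operations act. The argument has two parts: first compare the Tambara Jacobson radical $J$ with the ordinary Jacobson radicals $\mathrm{Jac}(T(G/H))$, then run an induction over the subgroup lattice.

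\textbf{Step 1: $J(G/e)\subseteq \mathrm{Jac}(T(G/e))$.} Let $z\in J(G/e)$ and $b\in T(G/e)$, and suppose $1-bz$ is not a unit. Choose a maximal ideal $\mf m$ of $T(G/e)$ containing $1-bz$, and put $\mf n=\bigcap_{g\in G} g\mf m$. Define
\[
I(G/H)=\{y\in T(G/H) : \res^H_e y\in \mf n\}.
\]
I will check that $I$ is a proper Tambara ideal. It is closed under restriction and conjugation. It is closed under transfers and norms because $\res\circ\tr$ is a sum of conjugates and $\res\circ\nm$ is a product of conjugates, by the double coset formulas, and $\mf n$ is a $G$-stable ideal. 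It is proper since $1\notin\mf n$.

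Now let $\mathcal M$ be any maximal Tambara ideal containing $I$. Then $\mathcal M(G/e)$ is a proper $G$-stable ideal containing $\mf n\supseteq\prod_g g\mf m$. Hence some maximal ideal containing $\mathcal M(G/e)$ is prime and contains this product, so it equals $g\mf m$ for some $g$. Since $z\in J(G/e)\subseteq\mathcal M(G/e)$ and $\mathcal M(G/e)$ is $G$-stable, we get $g^{-1}z\in\mathcal M(G/e)\subseteq g\mf m$, which gives $z\in\mf m$. Combined with $1-bz\in\mf m$, this forces $1\in\mf m$, a contradiction.

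\textbf{Step 2: $J(G/H)\subseteq\mathrm{Jac}(T(G/H))$ for every $H$.} This is where multiplicative cohomologicality enters. Let $x\in J(G/H)$ and $a\in T(G/H)$, and set $u=1-ax$. Then $\res^H_e u=1-\res(a)\res(x)$ with $\res(x)\in J(G/e)$, so $\res^H_e u$ is a unit by Step 1. Since norms are multiplicative and unital, $\nm^H_e$ carries units to units. Therefore $u^{|H|}=\nm^H_e\res^H_e u$ is a unit, and hence so is $u$.

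\textbf{Step 3: induction on $M$.} Suppose $M\neq 0$ and choose $H$ minimal, up to conjugacy, with $M(G/H)\neq0$. The submodule $JM$ at level $G/H$ is spanned by elements obtained from products $j\cdot m$ (with $j\in J(G/K)$, $m\in M(G/K)$) by applying transfers, restrictions and conjugations. Using Frobenius reciprocity and the double coset formula, every such element is a sum of terms $\tr^H_L(\cdots)$ from subgroups $L\le H$ of products at level $L$. By minimality of $H$, only the terms with $L=H$ survive, and these are conjugates of $J\cdot M$ at level $H$. This yields
\[
M(G/H)=J(G/H)\,M(G/H)\subseteq \mathrm{Jac}(T(G/H))\,M(G/H).
\]
Classical Nakayama then gives $M(G/H)=0$, a contradiction.

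I expect the main obstacle to be this bookkeeping in Step 3: showing that $(JM)(G/H)$ really reduces to $J(G/H)M(G/H)$ at a minimal level. This needs the Mackey double coset formula for the module action, together with the fact that restrictions of elements of $J$ stay in $J$. Step 1 is the conceptual heart of the argument, but it is clean once the ideal $I$ is written down.
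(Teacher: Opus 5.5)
The gap is in how you get finite generation of $M(G/H)$; the rest of your route is the paper's and is sound. Step 2 is exactly the argument of \cref{cor: jac of mc}. Step 3 is the paper's minimal-level reduction via \cref{cor: box product whe stuff vanishes}. Step 1 proves directly the inclusion $J(G/e)\subseteq\mathrm{Jac}(T(G/e))$, which the paper instead extracts from the bijection between maximal Tambara ideals and $G$-orbits of maximal ideals of $T(G/e)$. There is one slip in Step 1: you want $gz\in\mathcal M(G/e)\subseteq g\mf m$, not $g^{-1}z$. You are right to impose finite generation. The statement as printed is false without it (take $G=e$, $T=\mathbb{Z}_{(p)}$, $M=\mathbb{Q}$), and the paper's proof uses it tacitly when it invokes the classical Nakayama lemma.

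Your claim that a finitely generated $T$-module is levelwise finitely generated is false. Under the standard identification $T_X(G/H)\cong T(X\times G/H)$, this level is a product of rings $T(G/L)$ with $L=H\cap gKg^{-1}$. These are made into $T(G/H)$-modules through restriction, and restriction need not be finite. For example, let $k$ be a field of characteristic not $2$. Let $T$ be the fixed-point $C_2$-Tambara functor of $R=k[x_1,x_2,\dots]$ with $x_i\mapsto -x_i$; it is multiplicatively cohomological. Then $T_{C_2/e}(C_2/C_2)\cong R$, which is not finitely generated over $R^{C_2}$.

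What you need is finite generation only at your minimal level $H$, and that does hold by the same vanishing you use in Step 3. Suppose $M$ is generated by finitely many $m_i\in M(G/K_i)$. Then $M(G/H)$ is spanned by the elements $\tr^H_L\bigl(t\cdot\res^{gK_ig^{-1}}_L c_{g,K_i}(m_i)\bigr)$, where $g\in H\backslash G/K_i$, $L=H\cap gK_ig^{-1}$ and $t\in T(G/L)$. The terms with $L<H$ vanish because $M(G/L)=0$. Hence $M(G/H)$ is generated over $T(G/H)$ by the finitely many elements $\res^{gK_ig^{-1}}_H c_{g,K_i}(m_i)$ with $H\le gK_ig^{-1}$. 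With this replacement, classical Nakayama applies at level $G/H$ and your argument is complete.
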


A classical application of Nakayama's lemma for commutative rings is that every finitely generated projective module over a local ring is free.  The proof amounts to the observation that Nakayama's lemma allows us to reduce the claim to that of fields, where every module is free. Working analogously, we prove the following result using \cref{introthm: nakayama}.

\begin{introtheorem}\label{thm C}
    Let $T$ be a local, multiplicatively cohomological Tambara functor with maximal ideal $\mf{m}$.  If $P$ is a finitely generated projective $T$-module such that $P/\mf{m}P$ is a free $T/\mf{m}$-module, then $P$ is a free $T$-module.
\end{introtheorem}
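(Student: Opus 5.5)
The plan follows the classical argument for commutative local rings, with \cref{introthm: nakayama} doing the work that Nakayama's lemma does there.

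First we need a free module mapping onto $P$. By hypothesis $P/\mf{m}P$ is a free $T/\mf{m}$-module, so it has a basis $\bar{x}_1,\dots,\bar{x}_n$, with $\bar{x}_i \in (P/\mf{m}P)(G/H_i)$. A free module on generators at levels $G/H_i$ is the representable $T$-module $F=\bigoplus_i T_{G/H_i}$, where $T_{G/H}(X)=T(G/H\times X)$. We lift each $\bar{x}_i$ to some $x_i\in P(G/H_i)$ using the surjection $P\to P/\mf{m}P$, and we let $\varphi\colon F\to P$ be the map classifying $(x_1,\dots,x_n)$. Since $T$ is local, its Jacobson radical is $J=\mf{m}$. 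Reducing $\varphi$ modulo $\mf{m}$ gives a map $F/\mf{m}F\to P/\mf{m}P$ that sends the free generators to the basis $\bar{x}_i$, so it is an isomorphism. Here we use that $F/\mf{m}F$ is the free $T/\mf{m}$-module on the same generators, which follows because $(-)/\mf{m}(-)$ is the base change $T/\mf{m}\boxtimes_T(-)$ and base change preserves representables.

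Second, $\varphi$ is surjective. Let $C=\operatorname{coker}\varphi$. Right exactness of $T/\mf{m}\boxtimes_T(-)$ gives $C/\mf{m}C=\operatorname{coker}(\varphi/\mf{m})=0$, so $\mf{m}C=C$. Then \cref{introthm: nakayama} gives $C=0$.

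Third, $\varphi$ is injective. Let $K=\ker\varphi$. Since $P$ is projective, the short exact sequence $0\to K\to F\to P\to 0$ splits, so $F\cong K\oplus P$. Base change preserves direct sums, so $F/\mf{m}F\cong K/\mf{m}K\oplus P/\mf{m}P$, and under this decomposition $\varphi/\mf{m}$ is the projection onto the second summand. Because $\varphi/\mf{m}$ is an isomorphism, $K/\mf{m}K=0$, and \cref{introthm: nakayama} again gives $K=0$. Hence $P\cong F$ is free. Finite generation of $P$ is not needed to apply \cref{introthm: nakayama}, which holds for all modules. Its role is to guarantee that the basis of $P/\mf{m}P$ is finite, and $K$, being a summand of $F$, is finitely generated anyway.

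I expect the main obstacle to be the module-theoretic bookkeeping rather than the Nakayama step. We must confirm that $M/\mf{m}M$ agrees with $T/\mf{m}\boxtimes_T M$, that this functor is right exact and additive, and that it sends free modules to free modules on the same generators. To do this, I would describe $\mf{m}M\subseteq M$ as the submodule generated levelwise by the products $a\cdot m$ together with transfers of such products, that is, the image of $\mf{m}\boxtimes M\to M$. From the exact sequence $\mf{m}\boxtimes M\to T\boxtimes M\to T/\mf{m}\boxtimes M\to 0$ we get $M/\mf{m}M\cong T/\mf{m}\boxtimes_T M$, and right exactness and preservation of representables then follow formally from the box product being a left adjoint.
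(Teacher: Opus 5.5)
Your proof is correct and follows the paper's argument. You lift the basis of $P/\mf{m}P$ to a map $T_X\to P$ and get surjectivity from Nakayama; the paper uses the $M=N+JM\Rightarrow N=M$ variant where you pass to the cokernel. You then kill the split complement with Nakayama; the paper phrases this as rerunning the argument for $Q$, whose reduction mod $\mf{m}$ vanishes.

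Your closing remark, however, has the role of finite generation backwards. Nakayama's lemma does require finite generation: already for $G=e$, the $\mathbb{Z}_{(p)}$-module $\mathbb{Q}$ satisfies $\mf{m}\mathbb{Q}=\mathbb{Q}$. So the finite generation of $P$ is precisely what makes $C$ finitely generated. Your argument still stands because $C$ and $K$ are both finitely generated. For such a module $M$, the minimal nonzero level $M(G/H)$ is generated over $T(G/H)$ by finitely many restricted conjugates of the generators, since the transferred terms vanish. Hence the levelwise proof of \cref{introthm: nakayama} applies to them.
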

We warn the reader that our definition of free module is more general than a direct sum of copies of $T$.  See \cref{def: free modules} for details.

While $T/\mf{m}$ is a Tambara field, in the sense that it has no non-zero proper ideals, it is not known whether this guarantees that all projective modules are free. Some sufficient conditions on $T/\mf{m}$ to guarantee that all projective modules are free are identified in \cite[Theorem E]{ChanWisdom}, recalled below as \cref{thm: Chan Wisdom}. In particular, this holds for the constant $C_{p^n}$-Tambara functors at a field. Using this, we can immediately identify several cases of local Tambara functors where every finitely generated projective is free.
\begin{introtheorem}\label{thm D}
    If $\mathcal{O}$ is a commutative local ring, then the constant $C_{p^n}$-Tambara functor at $\mathcal{O}$ is local, and every finitely generated projective $\mathcal{O}$-module is free.
\end{introtheorem}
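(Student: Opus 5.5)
Theorem D states that if $\mathcal{O}$ is a commutative local ring, then the constant $C_{p^n}$-Tambara functor $\ul{\mathcal{O}}$ is local, and every finitely generated projective $\ul{\mathcal{O}}$-module is free. The plan is to deduce this from \cref{thm C} together with the cited result \cite[Theorem E]{ChanWisdom}. There are three steps: show that $\ul{\mathcal{O}}$ is multiplicatively cohomological and local, identify its residue Tambara field, and then use Chan--Wisdom to see that $P/\mf{m}P$ is free.

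\emph{Step 1: multiplicatively cohomological.} In the constant Tambara functor $\ul{\mathcal{O}}$ for $G=C_{p^n}$, every level is $\mathcal{O}$. Restrictions are the identity, transfers $\tr^K_H$ are multiplication by $|K/H|$, and norms $\nm^K_H$ are the $|K/H|$-th power map. Hence $\nm^K_H\circ\res^K_H$ is the $|K/H|$-th power map, so $\ul{\mathcal{O}}$ is multiplicatively cohomological.

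\emph{Step 2: local, with an explicit maximal ideal.} Let $\mf{n}$ be the maximal ideal of $\mathcal{O}$. I would show that $\ul{\mathcal{O}}$ has a unique maximal Tambara ideal.
\begin{itemize}
\item An ideal $I$ is proper exactly when $1\notin I(C_{p^n}/e)$. This is because restriction is a ring map, so if $1$ lies in any level it lies in the bottom level, and conversely $1$ at the bottom restricts from $1$ at every level.
\item For any proper ideal $I$, restriction is injective (it is the identity) and carries $I(C_{p^n}/H)$ into $I(C_{p^n}/e)$. So every level of $I$ is contained in the proper ideal $I(C_{p^n}/e)\subseteq\mf{n}$.
\end{itemize}
Define $\mf{m}$ by
\[
\mf{m}(C_{p^n}/H)=\{x\in\mathcal{O} : \nm^{C_{p^n}}_H\text{-style closure conditions hold}\}.
\]
The natural first guess is simply $\mf{m}(C_{p^n}/H)=\mf{n}$ at every level. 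I would check closure under the operations:
\begin{itemize}
\item restriction and transfer clearly preserve $\mf{n}$, and so do norms, since $x\in\mf{n}$ implies $x^{p^k}\in\mf{n}$;
\item the Tambara reciprocity condition for norms of sums is automatic in the constant Tambara functor.
\end{itemize}
This gives the maximal ideal $\mf{m}$, and every proper ideal is contained in it. The quotient $\ul{\mathcal{O}}/\mf{m}$ is the constant Tambara functor $\ul{k}$, where $k=\mathcal{O}/\mf{n}$ is the residue field.

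\emph{Step 3: conclude.} Let $P$ be a finitely generated projective $\ul{\mathcal{O}}$-module. Then $P/\mf{m}P$ is a finitely generated projective module over $\ul{k}$. By \cref{thm: Chan Wisdom}, which covers constant $C_{p^n}$-Tambara functors at a field, $P/\mf{m}P$ is free. \Cref{thm C} then gives that $P$ is free.

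The main obstacle I expect is matching hypotheses in Step 3. I need to confirm that \cite[Theorem E]{ChanWisdom} applies to every field $k$, including both characteristic $p$ and characteristic $\neq p$, and to finitely generated projectives. If it only covers some characteristics, I would split into cases according to whether $p\in\mf{n}$, and check each against the stated conditions of \cref{thm: Chan Wisdom}.
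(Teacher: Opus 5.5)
Your proposal is correct and follows essentially the same route as the paper. You show $\ul{\mathcal{O}}$ is multiplicatively cohomological and local with $\mf{m}$ equal to the maximal ideal $\mf{n}$ of $\mathcal{O}$ at every level, identify the residue field as the constant $\ul{k}$, and then apply \cref{thm: Chan Wisdom} and \cref{thm C}. The only difference is that you verify locality by hand, where the paper cites \cref{cor: bij of max ideals} and \cref{lemma: maximal ideal looks like this}.

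Two small touch-ups are needed. To see that $1\in I(C_{p^n}/e)$ forces $I=\ul{\mathcal{O}}$, use $\nm^H_e(1)=1$ rather than restriction. Your characteristic worry is also moot: $\ul{k}$ has every level a field and every restriction the identity, so the hypotheses of \cref{thm: Chan Wisdom} hold for every field $k$.
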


While a result like this may seem basic, a direct proof without some version of Nakayama's lemma seems rather difficult to produce.  As a future direction, it would be interesting to expand the class of Tambara fields for which every finitely generated projective module is free. The results here would the immediately allow us to lifts such results to more local Tambara functors.

\noindent \textbf{Organization:} The rest of the paper is organized as follows. All of the background material is recalled in \cref{sec: background}.  The proof of \cref{thm:main} is carried out in \cref{sec: reduction,sec: 4}.  Nakayama's lemma for multiplicatively cohomological Tambara functors is proven in \cref{sec: nakayama}.  Finally, in \cref{sec: local} we give the applications to local Tambara functors, including the proofs of \cref{thm C,thm D}.

\subsection{Acknowledgments}
I would like to thank Ben Spitz for pointing out the conjecture in \cite{BQS} and for several helpful conversations. Thanks also to Maxine Calle and Teena Gerhardt for discussions which improved the exposition.  The author was partially supported by NSF grant DMS-2135960.

\section{Tambara functors}\label{sec: background}

    In this section we recall the necessary background on Tambara functors,, particularly related to ideals and modules.  We will not give a comprehensive review and instead will simply recall the facts that we need. We refer the reader to any of \cite{CCMQSV,mazur:2019,strickland:2012} for a more thorough treatment.
    
    Let $G$ be a finite group. As recalled in the introduction, a $G$-Tambara functor consists of a collection of commutative rings $T(G/H)$, together with transfer, norm, restriction, and conjugation operations subject to some compatibility conditions.  A map of Tambara functors $f\colon S\to T$ is a collection of a ring homomorphisms $f_{G/H}\colon S(G/H)\to T(G/H)$ which are commute with the four operations. For the trivial group $G=e$, a Tambara functor is the same a commutative ring. Below, we give a (non-exhaustive) list of compatibility conditions that we will use in this paper.

\begin{proposition}\label{prop: compatibility}
    The operations in a $G$-Tambara functor satisfy the following relations.
    \begin{enumerate}
        \item The transfer maps are additive, the norm maps are multiplicative. The restriction and conjugation maps are ring maps.
        \item (associativity) For any $H\leq K\leq L$ and $g,x\in G$ we have $\tr^L_K\tr^K_H = \tr^L_H$, $\nm^L_K\nm^K_H = \nm^L_H$,  $\res^K_H\res^L_K = \res^L_H$, and $c_{x,gHg^{-1}}c_{g,H} = c_{xg,H}$,
        \item (Frobenius reciprocity) For any $x\in T(G/K)$ and $y\in T(G/H)$ we have $x\cdot \tr^K_H(y) = \tr^K_H(\res^K_H(x)\cdot y)$.
        \item (conjugation commutes) For any $H\leq K$ and $g\in G$ we have $\res^{gKg^{-1}}_{gHg^{-1}}c_{g,K} = c_{g,H}\res^K_H$, $\tr^{gKg^{-1}}_{gHg^{-1}}c_{g,H} = c_{g,H}\tr^K_H$, and $\nm^{gKg^{-1}}_{gHg^{-1}}c_{g,H} = c_{g,H}\nm^K_H$,
        \item  For any $K\leq G$, and $k\in K$ we have $c_{k,K} = \mathrm{id}_{T(G/K)}$.
        \item (additive double coset formula) For any $L,H\leq K$ we have 
        \[
            \res^K_L\tr^K_H = \sum\limits_{\gamma\in L\backslash K/H} \tr^L_{L\cap \gamma H\gamma^{-1}} \res^{\gamma H\gamma^{-1}}_{L\cap \gamma H\gamma^{-1}} c_{\gamma,H}
        \]
        \item (multiplicative double coset formula) For any $L,H\leq K$ we have 
        \[
            \res^K_L\nm^K_H = \prod\limits_{\gamma\in L\backslash K/H} \nm^L_{L\cap \gamma H\gamma^{-1}} \res^{\gamma H\gamma^{-1}}_{L\cap \gamma H\gamma^{-1}} c_{\gamma,H}
        \]
    \end{enumerate}
\end{proposition}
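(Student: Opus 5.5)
These relations are the standard axioms, or immediate consequences of them, for Tambara functors presented as product-preserving functors out of the category $P^G$ of bispans of finite $G$-sets. I would derive each item from that description rather than verify anything by hand. Concretely, $T$ is a functor on bispans $X\leftarrow A\to B\to Y$. The operations are images of special bispans: $\res^K_H$ comes from the projection $G/H\to G/K$ used as the backward leg. Likewise $\nm^K_H$ and $\tr^K_H$ come from the same projection used as the middle (exponential) leg and as the forward leg, respectively. The conjugation $c_{g,H}$ comes from the $G$-isomorphism $G/H\to G/gHg^{-1}$, $xH\mapsto xg^{-1}(gHg^{-1})$.

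With this dictionary, items (1), (2), (4) and (5) follow from functoriality together with the composition rules in $P^G$.
\begin{itemize}
\item[(1)] Addition and multiplication on $T(G/H)$ are induced by the fold map $G/H\sqcup G/H\to G/H$, used in the forward and middle legs respectively. Naturality of restriction and conjugation with respect to these maps makes restriction and conjugation ring maps. Transfers and norms are additive, respectively multiplicative, because composites of forward maps, and of exponential maps, with the fold map commute by the distributive law's trivial cases.
\item[(2)] Composing two forward maps, two middle maps, or two backward maps in $P^G$ composes the underlying maps of $G$-sets, so associativity follows from $G/H\to G/K\to G/L$. The conjugation identity follows from composing the isomorphisms.
\item[(4)] Conjugation commutes with the other operations because the conjugation isomorphisms form commutative squares with the projections $G/H\to G/K$ and $G/gHg^{-1}\to G/gKg^{-1}$.
\item[(5)] For $k\in K$, the map $xK\mapsto xk^{-1}K$ is the identity of $G/K$, so $c_{k,K}=\mathrm{id}$.
\end{itemize}

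Items (3), (6) and (7) come from composing a backward map with a forward or middle map in $P^G$. That composite is computed by a pullback, respectively an exponential diagram.

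For the double coset formulas, I compute the pullback of $G/L\to G/K\leftarrow G/H$, which is $G\times_K(K/L\times K/H)$. It decomposes into orbits $G/(L\cap\gamma H\gamma^{-1})$ indexed by $\gamma\in L\backslash K/H$. Composing restriction along a projection with transfer gives (6), since $T$ sends disjoint unions to products and the forward leg of a sum becomes a sum. For (7) the same pullback appears, because the exponential diagram for pulling back along a backward map reduces to a pullback: restricting a norm is base change of the dependent product. The disjoint union of orbits then turns into a product of norms.

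Frobenius reciprocity (3) is the distributive interaction of multiplication, which is the middle fold map, with the transfer. It follows from writing $x\cdot\tr^K_H(y)$ as $\nm_{\nabla}\circ(\mathrm{id}\times\tr)$ and applying the pullback relation to the square with vertex $G/H$. Equivalently, it follows from the standard Mackey-functor projection formula.

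I expect the main obstacle to be bookkeeping the identification of each double coset orbit with the correct composite $\res\circ c_\gamma$. In particular, one must check that the conjugation appears as $c_{\gamma,H}$ and not $c_{\gamma^{-1},H}$. That convention check is where I would be most careful, using the explicit map $(l,\gamma H)\mapsto$ orbit representatives. In practice, since these are standard facts, I would largely cite \cite{tambara:1993,strickland:2012} for (6), (7) and (3), and verify only the conventions.
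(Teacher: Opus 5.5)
The paper gives no argument for this proposition. It records these relations as standard consequences of the definition and refers to the literature, so your plan of reading them off from the bispan presentation, and citing Tambara and Strickland for the harder items, is consistent with it. Your treatment of (2) and (4)--(7) is right, including the convention check. Write $\phi_{g,H}\colon G/H\to G/gHg^{-1}$ for your isomorphism $xH\mapsto xg^{-1}(gHg^{-1})$, and set $c_{g,H}=T_+(\phi_{g,H})=T^*(\phi_{g,H}^{-1})$. Then the $\gamma$-component of the pullback maps to $G/H$ by $x(L\cap\gamma H\gamma^{-1})\mapsto x\gamma H$. This is $\phi_{\gamma,H}^{-1}$ after the projection to $G/\gamma H\gamma^{-1}$, so $\res\circ c_{\gamma,H}$ appears as claimed.

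The step that is genuinely misdescribed is (3). Write $T^*,T_\bullet,T_+$ for the operations attached to backward, middle and forward legs. The pullback (Beck--Chevalley) relations only move a restriction past a transfer or a norm; they never interchange a norm with a transfer. So ``applying the pullback relation'' cannot rewrite $x\cdot\tr^K_H(y)=T_\bullet(\nabla_{G/K})\,T_+(\mathrm{id}\sqcup p)(x,y)$, where $p\colon G/H\to G/K$. What you need is the exponential-diagram (distributive) axiom for $G/K\sqcup G/H\xrightarrow{\mathrm{id}\sqcup p}G/K\sqcup G/K\xrightarrow{\nabla}G/K$:
\begin{itemize}
\item the dependent product is $G/H$, since a section over $\nabla^{-1}(kK)$ amounts to a choice of a point of $p^{-1}(kK)$;
\item its pullback to $G/K\sqcup G/K$ is $G/H\sqcup G/H$;
\item the evaluation map is $p\sqcup\mathrm{id}$.
\end{itemize}
This gives
\[
T_\bullet(\nabla_{G/K})\circ T_+(\mathrm{id}\sqcup p)=T_+(p)\circ T_\bullet(\nabla_{G/H})\circ T^*(p\sqcup\mathrm{id}),
\]
which on $(x,y)$ is exactly $\tr^K_H(\res^K_H(x)\,y)$. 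Your fallback via the Mackey projection formula works only if you take as known that Tambara functors are Green functors, and that fact is itself proved by this computation.

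Relatedly, your sentence saying these composites are ``computed by a pullback, respectively an exponential diagram'' mixes up the roles. Restriction after transfer and restriction after norm are both computed by pullbacks, as you in effect concede for (7). The exponential diagram is precisely what handles norm after transfer, i.e.\ (3).

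In (1), additivity of transfers and multiplicativity of norms is plain covariant functoriality of $T_+$ and $T_\bullet$ applied to $p\circ\nabla_{G/H}=\nabla_{G/K}\circ(p\sqcup p)$. It is not a case of the distributive law.

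Finally, in (4) let $p'\colon G/gHg^{-1}\to G/gKg^{-1}$ be the projection. Your square $\phi_{g,K}\circ p=p'\circ\phi_{g,H}$ yields $\tr^{gKg^{-1}}_{gHg^{-1}}c_{g,H}=c_{g,K}\tr^K_H$ and $\nm^{gKg^{-1}}_{gHg^{-1}}c_{g,H}=c_{g,K}\nm^K_H$. The printed statement has $c_{g,H}$ on the right, which is a typo since $c_{g,H}$ cannot be applied to an element of $T(G/K)$; your argument proves the intended version.
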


\begin{remark}\label{remark: Weyl group actions}
    For any $K\leq G$ and $g\in N_G(K)$ the conjugation map $c_{g,K}$ is a ring automorphism of $T(G/K)$.  These assemble into an action of the Weyl group $W_G(K):=N_G(K)/K$ on $T(G/K)$, by (5) from \cref{prop: compatibility}. In particular, $T(G/e)$ admits an action by $G$ and relation (4) above implies that the image of the restriction map $\res^K_e\colon T(G/K)\to T(G/e)$ lands the the $K$-fixed points $T(G/e)^K$. For $g\in N_G(K)$ and $x\in T(G/K)$ we will often simply write $g\cdot x$ instead of $c_{g,K}(x)$. 
\end{remark}

We start with two important examples.
\begin{example}
    The Burnside Tambara functor, denoted $A_G$, has $A_G(G/H) = A(H)$, the Burnside ring of virtual finite $H$-sets.  The multiplication is given by cartesian product and the addition is given by disjoint union. The transfer and norm operations are induced by induction and coinduction of sets with group action.  The restriction and conjugation actions are given by the operation of the same name on sets with group action.
\end{example}
\begin{example}
    The constant Tambara functor $\ulZ$ is given by $\ulZ(G/H) = \mathbb{Z}$ for all $H$.  The transfer map is given by $\tr^K_H\colon \mathbb{Z}\to \mathbb{Z}$ is multiplication by $|K/H|$.  The norm $\nm^K_H\colon \mathbb{Z}\to \mathbb{Z}$ is the $|K/H|$-th power map. The restrictions and conjugations are identities.
\end{example}

We write $\tamb_G$ for the category of $G$-Tambara functors. For $H\leq K$, the categories $\tamb_H$ and $\tamb_K$ are related by an adjunction.

\begin{proposition}
    For $H\leq K$ there is a forgetful functor $R^K_H\colon \tamb_K\to \tamb_H$ with natural isomorphisms of rings $R^K_H(T)(H/L)\cong T(K/L)$ for all $L\leq H$.  Moreover, there is an adjunction $N^K_H\dashv R^K_H$, and isomorphisms $A_{K}\cong N^K_H(A_H)$ and $A_H\cong R^K_H(A_K)$.
\end{proposition}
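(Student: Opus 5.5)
Since this is background, I will establish it by working with Tambara functors as product-preserving functors out of the category of bispans $\mathcal{U}_G$ of finite $G$-sets, and build everything from the induction/restriction functors on finite sets.

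\textbf{The forgetful functor.} The functor $K\times_H(-)\colon \mathrm{Fin}_H\to\mathrm{Fin}_K$ preserves finite coproducts and pullbacks, and it sends exponential diagrams (distributors) to exponential diagrams, because it is an equivalence onto the slice $\mathrm{Fin}_K/(K/H)$ and dependent products are computed slicewise. Hence it induces a coproduct-preserving functor $\mathrm{Ind}\colon\mathcal{U}_H\to\mathcal{U}_K$ on bispan categories. I would set $R^K_H(T)=T\circ \mathrm{Ind}$. This is again product preserving, so it is an $H$-Tambara functor. The isomorphism $R^K_H(T)(H/L)=T(K\times_H H/L)\cong T(K/L)$ is then immediate, and it is natural and multiplicative since $\mathrm{Ind}$ preserves the span encoding the multiplication.

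\textbf{The left adjoint.} Define $N^K_H$ as a left Kan extension along $\mathrm{Ind}$. Concretely, $\tamb_H$ is the category of product-preserving functors $\mathcal{U}_H\to\mathrm{Set}$, and such categories are reflective in all functors. So I would left Kan extend $T$ along $\mathrm{Ind}$ and then reflect back into product-preserving functors. Equivalently, $\tamb$ is locally presentable and algebraic, and $R^K_H$ preserves limits and filtered colimits, so an adjoint functor theorem gives $N^K_H$. I expect the only delicate point to be this existence step. Using the algebraic-theory viewpoint, where $\tamb_G$ is models of a multisorted Lawvere theory and $R^K_H$ is restriction along a morphism of theories, the left adjoint exists for general reasons.

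\textbf{The Burnside statements.} $A_H$ is the representable $\mathcal{U}_H(H/H,-)$, which is the initial $H$-Tambara functor. Left adjoints preserve initial objects, so $N^K_H(A_H)$ is initial in $\tamb_K$, hence isomorphic to $A_K$. For $R^K_H(A_K)\cong A_H$, I would compute directly. We have $R^K_H(A_K)(H/L)=A(L)$, the Burnside ring of $L$ viewed inside $K$, which is $A_H(H/L)$. The operations match because induction, coinduction and restriction of $L$-sets do not depend on the ambient group. Alternatively, $\mathcal{U}_K(K/K,K\times_H X)$ identifies naturally with $\mathcal{U}_H(H/H,X)$, since bispans out of a point into an induced set are spans of $K$-sets over $K\times_H X$, and these are equivalent to spans of $H$-sets over $X$.
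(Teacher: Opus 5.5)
Your overall strategy is the standard one and it is sound. You define $R^K_H$ by precomposing with the bispan functor induced by $K\times_H(-)$, obtain $N^K_H$ by Kan extension or the adjoint functor theorem, and deduce $N^K_H(A_H)\cong A_K$ from preservation of initial objects. The paper itself gives no argument beyond citing \cite[Remark 7.2, Lemma 7.3]{Chan:biincomplete}. Note, however, that you invert the paper's logic. The paper derives initiality of $A_G$ from $A_G\cong N^G_e(A_e)$ in the corollary right after the proposition, whereas you derive $N^K_H(A_H)\cong A_K$ from initiality. So initiality must be proved independently, and the justification you give for it is wrong.

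\textbf{The error.} The claim that $A_H$ is the corepresentable functor $\mathcal{U}_H(H/H,-)$ is false. By Yoneda, maps $\mathcal{U}_H(H/H,-)\to T$ correspond to elements of $T(H/H)$. So this is the free semi-Tambara functor on one generator in level $H/H$ (for $H=e$ it is the semiring $\mathbb{N}[x]$), not an initial object.

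\textbf{The fix.} The correct corepresenting object is $\emptyset$, the terminal object (empty product) of $\mathcal{U}_H$. A bispan $\emptyset\leftarrow A\to B\to X$ forces $A=\emptyset$, so $\mathcal{U}_H(\emptyset,X)$ is the set of isomorphism classes of $H$-sets over $X$, i.e.\ the Burnside semiring $A^+_H(X)$. For any product-preserving $T$, maps $\mathcal{U}_H(\emptyset,-)\to T$ correspond to $T(\emptyset)=\ast$.

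\textbf{Semi-Tambara versus Tambara.} Product-preserving functors $\mathcal{U}_H\to\mathrm{Set}$ are only semi-Tambara functors; $\tamb_H$ is the full subcategory of those with ring values. So Yoneda shows only that $A^+_H$ is initial among semi-Tambara functors. You then need that additive completion is left adjoint to the inclusion \cite{tambara:1993} to conclude that $A_H$ is initial in $\tamb_H$. The same caveat applies to your Kan-extension construction of $N^K_H$: follow the reflection by additive completion. Your adjoint-functor-theorem alternative avoids this issue.

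\textbf{The alternative identification.} Your alternative bispan identification should likewise be made at $\emptyset$. The isomorphism $\mathcal{U}_K(\emptyset,K\times_H X)\cong\mathcal{U}_H(\emptyset,X)$ comes from the equivalence $\mathrm{Fin}_K/(K\times_H X)\simeq\mathrm{Fin}_H/X$, which commutes with pullback, composition and dependent product; then pass to additive completions. As written at $H/H$, the identification concerns free functors on a top-level generator, not Burnside functors.

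With these repairs the argument is complete. Your direct levelwise comparison for $R^K_H(A_K)\cong A_H$ was already fine.
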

\begin{proof}
    The first claim follows from  \cite[Remark 7.2]{Chan:biincomplete}.  The second follows from \cite[Lemma 7.3]{Chan:biincomplete}.
\end{proof}
\begin{corollary}
    The Burnside Tambara functor $A_G$ is the initial object of $\tamb_G$.
\end{corollary}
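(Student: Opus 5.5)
The plan is to deduce the corollary from the adjunction in the preceding proposition by induction on the group, using the trivial group as the base case. For the trivial group, a Tambara functor is just a commutative ring, and $A_e(e/e)=A(e)\cong\mathbb{Z}$, the Burnside ring of finite sets. Since $\mathbb{Z}$ is initial among commutative rings, $A_e$ is initial in $\tamb_e$.

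For general $G$, apply the proposition with $H=e$ and $K=G$. This gives an adjunction $N^G_e\dashv R^G_e$ together with an isomorphism $A_G\cong N^G_e(A_e)$. For any $G$-Tambara functor $T$ we then obtain natural bijections
\[
\tamb_G(A_G,T)\cong \tamb_G(N^G_e(A_e),T)\cong \tamb_e(A_e,R^G_e(T)).
\]
The right-hand side is a single point by the base case. So there is exactly one map $A_G\to T$, which is precisely the statement that $A_G$ is initial.

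No real induction is needed: the case $H=e$ suffices directly. The only step requiring care is identifying $A_e$ with the initial commutative ring $\mathbb{Z}$, and that identification is immediate because finite sets up to isomorphism are classified by cardinality, so the Burnside ring of the trivial group is $\mathbb{Z}$.
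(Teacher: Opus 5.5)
Your proposal is correct and follows the same approach as the paper: identify $A_e\cong\mathbb{Z}$ as the initial commutative ring, then use $A_G\cong N^G_e(A_e)$ and the adjunction $N^G_e\dashv R^G_e$. Your hom-set bijection $\tamb_G(A_G,T)\cong\tamb_e(A_e,R^G_e(T))$ is just the unpacked form of the paper's remark that left adjoints preserve initial objects, and the induction you mention at the start is indeed unnecessary, as you note yourself.
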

\begin{proof}
    A Tambara functor for the trivial group $e$ is the same as a commutative ring, and by definition $A_e$ is isomorphic to the integers, the initial commutative ring.  Since $A_G\cong N^G_e(A_e)$, by the preceding proposition, the claim follows from the fact the left adjoints preserve initial objects. 
\end{proof}

\subsection{Ideals and cohomological properties}

In this subsection we recall the definitions of ideals, and additively and multiplicatively cohomological Tambara functors. Ideals of Tambara functors were introduced by Nakaoka in \cite{nakaoka:2012}. 

\begin{definition}
      An \emph{ideal} $I$ of a Tambara functor $T$ is any collection of ring ideals $I(G/H)\subseteq T(G/H)$ which occurs as the levelwise kernels of a map of Tambara functors $f\colon T\to S$. These ideals are automatically closed under transfer, norm, restriction, and conjugation.
\end{definition}

  \begin{example}\label{example: kernel of eta}
    The initial map $\eta\colon A_G\to \ulZ$ is given levelwise by 
    \[
        \eta_{G/H}= \res^{H}_e\colon A(H)\to A(e)=\mathbb{Z}.
    \]
    Equivalently, this is the map which sends every virtual finite $H$-set $[X]-[Y]$ to the integer $|X|-|Y|$.  The kernel of this map is sometimes called the \textit{augmentation ideal} of $A_G$, and we denote it by $I_G\subset A_G$.  It is an exercise in the orbit stabilizer theorem to show that $I_G(G/K)$ is generated by elements of the form $|K/H|-K/H$, where $H\leq K$.
\end{example}

The generators of the augmentation ideal make it so that in $\ul{\bbZ}$ there is no distinction between $G/K$ and $|G/K|$.  Phrased another way, the relation
\[
    G/K = \tr^G_K\res^G_K(1) 
\]
in $A_G$ becomes $|G/K|=\tr^G_K\res^G_K(1)$ in $\ul{\bbZ}$.  Similarly, one checks that in $\ul{\bbZ}(G/G)$ we have the relation $n^{|G/K|} = \nm^G_K\res^G_K(n)$ for all $n\in \bbZ$.  The next definition turns these relations into definitions.

\begin{definition}
    Let $T$ be a $G$-Tambara functor.  We say that $T$ is:
    \begin{itemize}
        \item \emph{additively cohomological} if $\tr^K_H\circ \res^K_H$ is multiplciation by $|K/H|$ for any $H\leq K\leq G$;
        \item \emph{multiplicatively cohomological} if $\nm^K_H\circ \res^K_H$ is the $|K/H|$-th power map for any $H\leq K\leq G$.
    \end{itemize}
\end{definition}
\begin{example}
    The Tambara functor $\ulZ$ is both additively and multiplicatively cohomological.  The Burnside Tambara functor $A_G$ is neither.
\end{example}
\begin{example}[{\cite[(3.4)]{tambara:1993}}]
    For a finite group $G$, there is a Tambara functor  $\mathcal{H}$ given by
    \[
        \mathcal{H}(G/K)=\bigoplus_{n\geq 0} H^{2n}(K;\mathbb{Z})
    \]
    where the transfer, restriction, and conjugation are given by the operations of the same name in group cohomology.  The norm is given by the Evens norm \cite{Evens}.  The Tambara functor $\mathcal{H}$ is additively cohomological, which is the reason for the terminology.  We note that $\mathcal{H}$ is \textit{not} multiplicatively cohomological in general, which can be see when $G = C_2$.  There, $\mathcal{H}(C_2/C_2) \cong \mathbb{Z}[\beta]/(2\beta)$, where $\beta\in H^2(C_2;\mathbb{Z})$ is the polynomial generator.  Since $\beta^2\neq 0$, but $\res^{C_2}_e(\beta)\in H^2(e;\mathbb{Z})=0$ this Tambara functor is not multiplicatively cohomological.
\end{example}

\begin{example}\label{example: add not mult}
    We give an explicit example of a Tambara functor which is additively cohomological but not multiplicatively cohomological.  Let $G=C_2$ and let $T$ be the $C_2$-Tambara functor given by the \emph{Lewis diagram}
\[\begin{tikzcd}[ampersand replacement=\&]
	{\bbF_2[x,n]/(x^4-n^2)} \\
	\\
	{\bbF_2[x]}
	\arrow["{\res^{C_2}_e}"{description}, from=1-1, to=3-1]
	\arrow["{0=\tr^{C_2}_e}", shift left=3, curve={height=-12pt}, from=3-1, to=1-1]
	\arrow["{\nm^{C_2}_e}"', shift right=3, curve={height=12pt}, from=3-1, to=1-1]
\end{tikzcd}\]
where the top value is $T(C_2/C_2)$ and the bottom value is $T(C_2/e)$. The transfer is zero, $\res^{C_2}_e(x) = x$, $\res^{C_2}_e(n) = x^2$, and $\nm^{C_2}_e$ is the unique ring homomorphism with $\nm^{C_2}_{e}(x) = n$.
\end{example}

We end this subsection with a lemma we will need later.
\begin{lemma}\label{lemma: kernel of res is nilpotent}
    Suppose that $T$ is a multiplicatively cohomological $G$-Tambara functor, and let $x\in T(G/K)$ be an element such that $\res^K_H(x)=0$ for some $H<K$. Then $x$ is a nilpotent.
\end{lemma}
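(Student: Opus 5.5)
The plan is to use the multiplicatively cohomological relation directly. Since $T$ is multiplicatively cohomological and $H\leq K$, we have
\[
    x^{|K/H|} = \nm^K_H\res^K_H(x).
\]
By hypothesis $\res^K_H(x)=0$, so it suffices to know that $\nm^K_H(0)=0$. Then $x^{|K/H|}=0$, and since $H<K$ is a proper subgroup we have $|K/H|\geq 2$. In any case $|K/H|\geq 1$, so this exponent witnesses nilpotence.

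The only point needing care is that norms of zero vanish, since norms are multiplicative but not additive. For this I would use the multiplicative double coset formula, \cref{prop: compatibility}(7), with $L=H$:
\[
    \res^K_H\nm^K_H(0)=\prod_{\gamma\in H\backslash K/H} \nm^H_{H\cap\gamma H\gamma^{-1}}\res^{\gamma H\gamma^{-1}}_{H\cap \gamma H\gamma^{-1}}c_{\gamma,H}(0).
\]
The factor for $\gamma=e$ is $\nm^H_H(0)=0$, because $\nm^H_H$ is the identity. Hence $\res^K_H\nm^K_H(0)=0$. This only says that $\nm^K_H(0)$ lies in the kernel of restriction, which is circular for our purposes.

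A cleaner route is to work through the initial object instead. The element $0\in T(G/H)$ is the image of $0\in A_G(G/H)$ under the unique map $A_G\to T$, which commutes with norms. It is therefore enough to check that $\nm^K_H(0)=0$ in the Burnside ring, where it is the standard fact that coinduction of the empty $H$-set is the empty $K$-set. Alternatively, this vanishing is part of the standard Tambara functor axioms, where norms of zero vanish via the exponential formula for norms of sums. I would cite either fact.

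The main obstacle is this small point that $\nm^K_H(0)=0$. Everything else is a one-line application of the definition of multiplicatively cohomological.
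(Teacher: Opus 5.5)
Your proposal is correct and is exactly the paper's proof, $x^{|K/H|}=\nm^K_H\res^K_H(x)=\nm^K_H(0)=0$. The paper uses $\nm^K_H(0)=0$ without comment; your justification through the map $A_G\to T$ (coinduction of the empty $H$-set is empty) is valid, and the abandoned double coset detour can simply be dropped.
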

\begin{proof}
    We have $x^{|K/H|} = \nm^K_H\res^K_H(x) = \nm^K_H(0)=0.$
\end{proof}

\subsection{Mackey functors and modules}
Finally, we will need the definition of a module over a Tambara functor $T$.  For this, we need the definition of Mackey functors.

\begin{definition}
     A $G$-\emph{Mackey functor} $M$ consists of a collection of abelian groups $M(G/H)$ for each subgroup $H\leq G$ together with group homomorphisms $\tr^K_H, \res^K_H$ and $c_{g,H}$, as in the definition of a Tambara functor. A map of Mackey functors $f\colon M\to N$ is a collection of a group homomorphisms $f_{G/H}\colon M(G/H)\to N(G/H)$ which commute with the three operations.
\end{definition}

\begin{remark}
    The category of Mackey functors can be written as the category of product preserving, abelian group valued presheaves on a category $\mathcal{B}$, called the Burnside category.  Consequently, the category of Mackey functors is abelian.
\end{remark}

A Mackey functor is essentially a Tambara functor which has forgotten all multiplicative structure in sight.\footnote{This is not literally true.  There are Mackey functors which do not come from Tambara functors.}  In particular, it has forgotten that the levels are commutative rings, and it has forgotten the norm maps exist. Accordingly, every Tambara functor has an underlying Mackey functor. The transfer, restriction, and conjugation in a Mackey functor satisfy all relations from \cref{prop: compatibility} which do not make use of multiplication or norms.

The category of Mackey functors has a symmetric monoidal product, called the box product and denoted $\boxtimes$, and the unit is the Burnside Tambara functor.  Every Tambara functor is a commutative monoid for the box product, however there are commutative monoids which are not Tambara functors. Commutative monoids for the box product are called (commutative) Green functors. The box product admits an explicit description.

\begin{theorem}[{\cite[Theorem 5.5]{Sun}}]\label{thm: box product formula}
    Let $M$ and $N$ be two $G$-Mackey functors.  For all $H\leq G$ there is an isomorphism of abelian groups
    \[
        (M\boxtimes N)(G/K)\cong \left(\bigoplus_{H\leq K} M(G/H)\otimes N(G/H)\right)/I_K
    \]
    where $I_K$ is the submodule generated by all elements of the form
    \begin{enumerate}
        \item $\tr^L_H(x)\otimes y - x\otimes \res^L_H(y)$, for $x\in M(G/H)$ and $y\in N(G/L)$,
        \item $x\otimes \tr^L_H(y)-\res^L_H(x)\otimes y$ for $x\in M(G/L)$ and $y\in N(G/H)$,
        \item $c_{g,H}(x)\otimes c_{g,H}(y) - x\otimes y$ for $x\in M(G/H)$ and $y\in M(G/H)$ and $g\in K$,
    \end{enumerate}
    for $H\leq L\leq K$.
\end{theorem}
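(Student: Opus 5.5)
The plan is to construct the right-hand side directly as a Mackey functor and then show that it satisfies the universal property of the box product. Concretely, for each $K\leq G$ set
\[
    B(G/K) := \Bigl(\bigoplus_{H\leq K} M(G/H)\otimes N(G/H)\Bigr)/I_K ,
\]
and write $[x\otimes y]_H$ for the class of $x\otimes y$ in the $H$-summand. The universal property to aim for is the standard description of maps out of $M\boxtimes N$. Maps $M\boxtimes N\to P$ correspond to \emph{Dress pairings}: families of bilinear maps $M(G/H)\times N(G/H)\to P(G/H)$ that commute with restriction and conjugation and satisfy the two Frobenius reciprocity identities. This description follows from $\boxtimes$ being Day convolution on the Burnside category $\mathcal{B}$ with respect to the cartesian product. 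Relations (1)--(3) defining $I_K$ are exactly the Frobenius and conjugation-invariance relations, so the target is to show that $B$ corepresents Dress pairings.

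\textbf{Step 1: Mackey structure on $B$.} Define the operations as follows.
\begin{itemize}
    \item Transfer $\tr^{K'}_K\colon B(G/K)\to B(G/K')$ is induced by the inclusion of summands, $[x\otimes y]_H\mapsto [x\otimes y]_H$.
    \item Conjugation is $[x\otimes y]_H\mapsto [c_g x\otimes c_g y]_{gHg^{-1}}$.
    \item Restriction $\res^K_L$ is forced by the double coset formula:
    \[
        \res^K_L [x\otimes y]_H = \sum_{\gamma\in L\backslash K/H}\bigl[\res^{\gamma H\gamma^{-1}}_{L\cap\gamma H\gamma^{-1}} c_\gamma x\otimes \res^{\gamma H\gamma^{-1}}_{L\cap\gamma H\gamma^{-1}} c_\gamma y\bigr]_{L\cap \gamma H\gamma^{-1}}.
    \]
\end{itemize}
One must check that these maps are well defined on the quotient, i.e.\ that they send $I_K$ into $I_{K'}$, $I_{gKg^{-1}}$ and $I_L$ respectively. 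For transfer and conjugation this is immediate. Independence of the choice of double coset representatives uses relation (3) together with $c_{k,H}=\mathrm{id}$ for $k\in H$. Finally, one verifies the Mackey axioms, in particular the double coset formula for $\res\circ\tr$ in $B$.

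\textbf{Step 2: universal property.} There is a tautological Dress pairing $M(G/K)\times N(G/K)\to B(G/K)$, $(x,y)\mapsto [x\otimes y]_K$; its Frobenius identities hold by relations (1) and (2). Given a Dress pairing $\phi$ into $P$, define $B(G/K)\to P(G/K)$ by $[x\otimes y]_H\mapsto \tr^K_H\phi_H(x,y)$. This kills $I_K$ because $\phi$ satisfies Frobenius reciprocity and conjugation invariance. It commutes with transfers by associativity of $\tr$, and with restrictions by the double coset formula in $P$. Uniqueness follows because every class $[x\otimes y]_H$ equals $\tr^K_H$ of the tautological pairing applied to $(x,y)$, so $B$ is generated by transfers of external products.

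\textbf{Main obstacle.} I expect the hardest part to be well-definedness of restriction on relation (1) and (2) generators with $H\leq L'\leq K$. For this, one expands $\res^K_L$ of both sides by double cosets, $L\backslash K/L'$ followed by $(L\cap\gamma L'\gamma^{-1})\backslash \gamma L'\gamma^{-1}/\gamma H\gamma^{-1}$, and matches the terms using the Mackey double coset formula in $M$ or $N$ together with relations (1)--(3) at level $L$. The first attempt would be to reduce to the case $K=G$ by passing along the restriction $R^G_K$, which is strong monoidal for $\boxtimes$, and then carry out this bookkeeping once carefully.
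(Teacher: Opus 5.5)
The paper gives no argument for this statement. It is quoted from \cite[Theorem 5.5]{Sun} and used only through \cref{cor: box product whe stuff vanishes}, so there is nothing in the text to compare your route against.

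Your proposal is a correct, self-contained proof, using the standard universal-property argument with Dress pairings. The Step 2 checks are right. The assignment $[x\otimes y]_H\mapsto \tr^K_H\phi_H(x,y)$:
\begin{itemize}
\item kills (1) and (2), by applying $\tr^K_L$ to the Frobenius identities of $\phi$ for $H\leq L$;
\item kills (3), because $c_{g,K}=\mathrm{id}$ for $g\in K$;
\item commutes with restrictions, by the double coset formula in $P$;
\item is forced, because $[x\otimes y]_H=\tr^K_H\beta_H(x,y)$.
\end{itemize}

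The obstacle you flag is routine. The fibre of $L\backslash K/H\to L\backslash K/L'$ over $L\gamma L'$ is identified with $(L\cap\gamma L'\gamma^{-1})\backslash \gamma L'\gamma^{-1}/\gamma H\gamma^{-1}$ via $\epsilon\mapsto L\epsilon\gamma H$. Since $\epsilon\gamma H\gamma^{-1}\epsilon^{-1}\leq \gamma L'\gamma^{-1}$, after expanding $\res\circ\tr$ in $M$ the terms match. This uses one instance of relation (1) at level $L$, and the fact that $c_\epsilon$ acts trivially on $N(G/\gamma L'\gamma^{-1})$. The same fibre decomposition gives the Mackey double coset formula and transitivity of restriction in $B$.

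Two small points.
\begin{itemize}
\item Deriving the Dress-pairing description from Day convolution is itself a short lemma. You restrict the external pairings $M(X)\otimes N(Y)\to P(X\times Y)$ along diagonals, and naturality in transfers becomes Frobenius reciprocity. Many sources simply take this as the definition of $\boxtimes$.
\item The reduction to $K=G$ needs no monoidality, since $B(G/K)$ and $\res^K_L$ only involve $R^G_K M$ and $R^G_K N$. Strong monoidality of $R^G_K$ is what you would use to reduce the \emph{statement} to the top level.
\end{itemize}

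If you want to avoid the double coset bookkeeping entirely, index over $G$-sets $Y\to G/K$ instead of subgroups, and define restriction by pullback. Well-definedness and the Mackey axioms then follow from pasting pullbacks. Decomposing $Y$ into orbits recovers the stated formula, with relation (3) accounting for isomorphisms of orbits over $G/K$.

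Compared with the paper's citation, your approach buys self-containedness at the cost of exactly this verification.
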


We will need the following corollary.

\begin{corollary}\label{cor: box product whe stuff vanishes}
    Let $M$ and $N$ be two $G$-Mackey functors.  Suppose $K\leq G$ is such that that $N(G/H)=0$ for all proper subgroups $H<K$. Then $(M\boxtimes N)(G/K)$ is the quotient of $M(G/K)\otimes N(G/K)$ by the relation that $\tr^K_H(x)\otimes y=0$ for all $x\in M(G/H)$ and all $H<K$.
\end{corollary}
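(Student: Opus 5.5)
We specialize the explicit description of \Cref{thm: box product formula} at level $G/K$. By hypothesis every summand $M(G/H)\otimes N(G/H)$ with $H<K$ vanishes, so
\[
    (M\boxtimes N)(G/K)\cong \left(M(G/K)\otimes N(G/K)\right)/I_K',
\]
where $I_K'$ is the image of $I_K$ in the single remaining summand. It then remains to find which of the generators (1)--(3) survive and what they become. Here $H\leq L\leq K$ throughout.

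\textbf{Generators of type (1).} These are $\tr^L_H(x)\otimes y - x\otimes \res^L_H(y)$ with $x\in M(G/H)$ and $y\in N(G/L)$. Such a term can only land in the $K$-summand if $L=K$, and we split on $H$.
\begin{itemize}
    \item If $H<K$, then $x\otimes \res^K_H(y)$ lies in $M(G/H)\otimes N(G/H)=0$. The generator therefore reduces to $\tr^K_H(x)\otimes y$, which is exactly the stated relation.
    \item If $H=K$, the generator is zero.
    \item If $L<K$, every term lies in summands indexed by proper subgroups of $K$, all of which vanish.
\end{itemize}

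\textbf{Generators of type (2).} These are $x\otimes \tr^L_H(y)-\res^L_H(x)\otimes y$ with $y\in N(G/H)$. If $H<K$ then $y=0$, so the generator is $0$. If $H=K$ then also $L=K$, and the generator is trivially $0$.

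\textbf{Generators of type (3).} These are $c_{g,H}(x)\otimes c_{g,H}(y) - x\otimes y$ with $g\in K$. For $H<K$ they vanish, since both terms live in zero summands. For $H=K$, relation (5) of \cref{prop: compatibility} holds for Mackey functors: conjugation by $k\in K$ is the identity on level $G/K$. Hence these generators are also zero.

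Putting the three cases together, $I_K'$ is generated exactly by the elements $\tr^K_H(x)\otimes y$ with $H<K$, $x\in M(G/H)$ and $y\in N(G/K)$, which gives the corollary.

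The only point that needs care is the bookkeeping showing that the mixed type (1) relations, which link the $H$-summand with the $K$-summand, really collapse to $\tr^K_H(x)\otimes y$. This works because the $H$-summand is zero, so nothing is lost by passing to the quotient of the $K$-summand alone.
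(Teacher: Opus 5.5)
Your proof is correct and takes the same route as the paper. You specialize \cref{thm: box product formula} at $G/K$, note that only the $H=K$ summand survives, and check that the only generators of $I_K$ that do not vanish are the type (1) generators with $L=K$ and $H<K$, which reduce to $\tr^K_H(x)\otimes y$. Your case analysis, including the use of relation (5) for the type (3) generators with $H=K$, just spells out the paper's one-line remark that all other relations are automatically zero.
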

\begin{proof}
    The assumption on $N$ implies that all terms in the direct sum of \cref{thm: box product formula} vanish except for the term coming from $H=K$.  Moreover, all the relations in $I_K$ are automatically zero, except for the first which becomes the relation in the statement.
\end{proof}

We can now turn to the theory of modules.

\begin{definition}
    A \textit{module} over a Tambara functor is a Mackey functor $M$, together with a map $\mu\colon T\boxtimes M\to M$ satisfying the usual associativity and unitality axioms. We write $\Mod_T$ for the abelian category of $T$-modules and morphisms between then. 
\end{definition}

\begin{example}\label{ex: algebras are modules}
    If $S\to T$ is a map of Tambara functors then the underlying Mackey functor of $T$ is an $S$-module via the action
    \[
        S\boxtimes T\to T\boxtimes T\to T
    \]
    where the second map is multiplication map for $T$.
\end{example}

A module over a Tambara functor $T$ can be reformulated as a Mackey functor $M$ such that for all $K\leq G$ the abelian group $M(G/K)$ is a module over $T(G/K)$, subject to various conditions relating the transfer, restrictions, conjugations, and multiplications. This observation seems to be due to Lewis, in unpublished work. We refer the reader to \cite[Lemma 2.14]{ChanWisdom} for the statement of the relations, and \cite[Lemma 2.17]{Shulman:Thesis} for the proof.

\begin{example}
    Because the Burnside Tambara functor $A_G$ is the unit for the box product every Mackey functor is an $A_G$-module is canonical way.  If $M$ is a Mackey functor, this implies that $M(G/G)$ is a module over the Burnside ring $A(G)$.  The action is uniquely determined by the rule
    \[
        [G/K]\cdot m = \tr^G_K\res^G_K(m)
    \]
    for each $K\leq G$. If $M$ is also a $\ul{\bbZ}$-module then, following \Cref{example: kernel of eta}, multiplication by $[G/K]$ is also multiplication by the integer $|G/K|$.  In particular, every $\ul{\bbZ}$-module $M$ has the relation
    \[
        \tr^G_K\res^G_K(m) = |G/K|\cdot m
    \]
    and hence is additively cohomological.
\end{example}

Combining the last example with \Cref{ex: algebras are modules} indicates the proof of one direction in the following proposition.
\begin{proposition}[{\cite[Proposition 16.3]{TW}}]\label{prop: coh iff Zalg}
    A Tambara functor $T$ is additively cohomological if and only if it admits a map of Tambara functors from $\ulZ$.
\end{proposition}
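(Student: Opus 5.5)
The plan is to treat the two directions separately. The easy direction is the one indicated just before the statement: if $f\colon \ulZ\to T$ is a map of Tambara functors, then by \Cref{ex: algebras are modules} the underlying Mackey functor of $T$ becomes a $\ulZ$-module. By the preceding example, every $\ulZ$-module $M$ satisfies $\tr^G_K\res^G_K(m)=|G/K|\cdot m$. To get the relation for every pair $H\leq K\leq G$, not just for $K=G$, I would apply the same argument after restricting along $R^G_K$. Restriction preserves maps of Tambara functors and sends $\ulZ$ to the constant $K$-Tambara functor. Alternatively, one can write the relation at level $G/K$ of the module: the $A(K)$-action on $M(G/K)$ is given by $[K/H]\cdot m=\tr^K_H\res^K_H(m)$, and it factors through $A(K)\to\bbZ$, $[K/H]\mapsto |K/H|$.

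For the converse, assume $T$ is additively cohomological. By the corollary, $A_G$ is initial, so we have the unique map $\eta_T\colon A_G\to T$. Since $\ulZ\cong A_G/I_G$ by \Cref{example: kernel of eta}, a map $\ulZ\to T$ exists (and is then unique) exactly when $\eta_T$ kills the ideal $I_G$. Here I would use that the quotient of a Tambara functor by an ideal has the universal property for maps annihilating that ideal. That this holds for $A_G/I_G\cong \ulZ$ follows because $I_G$ is by definition the kernel of the surjection $\eta\colon A_G\to\ulZ$. So it suffices to check that $\eta_T(I_G(G/K))=0$ for each $K$.

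By \Cref{example: kernel of eta}, $I_G(G/K)$ is generated as a ring ideal by the elements $|K/H|-[K/H]$ with $H\leq K$. Since $\eta_T$ is levelwise a ring map, it is enough to see that these generators vanish. Since $\eta_T$ commutes with transfers and restrictions and $[K/H]=\tr^K_H\res^K_H(1)$ in $A(K)$, we get
\[
\eta_T([K/H])=\tr^K_H\res^K_H(1_T)=|K/H|\cdot 1_T,
\]
by the additively cohomological hypothesis. Hence $\eta_T(|K/H|-[K/H])=0$.

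The main obstacle is the step that factors a map through a quotient by an ideal. This requires knowing that an ideal, as defined via kernels, gives a quotient Tambara functor $T/I$ that is computed levelwise and has the expected universal property. I would argue this concretely. Given $\eta_T$ vanishing on $I_G$, define $\ulZ(G/K)=A(K)/I_G(G/K)\to T(G/K)$ levelwise; it is well defined and a ring map. It commutes with all four operations because the projection $A_G\to\ulZ$ is levelwise surjective and both maps out of $A_G$ commute with the operations. Indeed, for $\nm$ one lifts $n\in\bbZ$ to $A(H)$ and uses that $\eta_T$ commutes with norms, and similarly for the others. This surjectivity argument avoids needing any general theory of quotients.
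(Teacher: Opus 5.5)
Your proof is correct. The paper gives no argument of its own here. It cites \cite[Proposition 16.3]{TW} and sketches only the forward direction: $T$ becomes a $\ulZ$-module via \Cref{ex: algebras are modules}, and $\ulZ$-modules are cohomological. That is exactly your first paragraph. You are right to redo it at every level $G/K$, via $R^G_K$ or the $A(K)$-action on $M(G/K)$, since the paper's example only writes out $\tr^G_K\res^G_K$. One line also suffices: by Frobenius reciprocity, $\tr^K_H\res^K_H(t)=t\cdot\tr^K_H(1_T)=t\cdot f(\tr^K_H(1))=|K/H|\,t$.

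For the converse you take a genuinely different route from the citation. The cited result concerns Mackey and Green functors and says nothing about norms. So from it one would still have to check separately that the candidate map $n\mapsto n\cdot 1_T$ satisfies $\nm^K_H(n\cdot 1_T)=n^{|K/H|}\cdot 1_T$. Your argument avoids this by lifting to $A_G$. It uses initiality of $A_G$, the explicit generators $|K/H|-[K/H]$ of $I_G(G/K)$, and levelwise surjectivity of $A_G\to\ulZ$. This gives compatibility with norms and conjugations for free. It also proves uniqueness of the map $\ulZ\to T$, and shows that it is enough to know $\tr^K_H(1_T)=|K/H|\cdot 1_T$.

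The citation is shorter and places the statement within the classical theory of cohomological Mackey functors. Your version is self-contained and actually handles the extra Tambara (norm) structure.
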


\begin{remark}\label{rem: tensoring with M is right exact}
    The category of Mackey functors is closed symmetric monoidal, meaning that for all Mackey functors $M$ the functor $(-)\boxtimes M$ has a right adjoint, which gives an internal hom for the category of Mackey functors.  We will not use this construction, but we do note that it implies the functor $(-)\boxtimes M$ is a left adjoint, and is therefore right exact.
\end{remark}

Finally, we will make some use the relative tensor product over a Tambara functor $T$. While we have defined modules as left modules, we can use the fact that $T$ is a commutative $\boxtimes$-monoid to see that every left $T$-module is also a right $T$-module in the usual way.

\begin{definition}
    Let $T$ be a Tambara functor, and let $M$ and $N$ be two $T$-modules.  The relative box product $M\boxtimes_TN$ of $M$ and $N$ over $T$ is defined as the coequalizer of the maps
    \[
    \begin{tikzcd}
            M\boxtimes T\boxtimes N \ar[r,shift left,"\rho\boxtimes N"]\ \ar[r,shift right, "M\boxtimes \lambda"']& M\boxtimes N
    \end{tikzcd}
    \]
    where $\rho$ and $\lambda$ are the right and left acitons of $T$ on $M$ and $N$, respectively.
\end{definition}

\begin{remark}\label{rem: relative tensoring with M is right exact}
    Since the box product is right exact, the relative box product is right exact.
\end{remark}

\section{A reduction}\label{sec: reduction}

In this section we take the first steps toward proving \cref{thm:main}, that every multiplicatively cohomological Tambara functor is also additively cohomological. We explain how proving \cref{thm:main} can be reduced to checking that a single Tambara functor is additively cohomological. This reduction already appears in \cite[Corollary 61]{BQS}, although for the sake of completeness, and establishing notation, we recall the details.  The key idea is that any Tambara functor admits universal quotients which are multiplicatively or additively cohomological.  It suffices to prove that in the case of $A_G$ these two quotients actually agree. We being by recalling the construction of these quotients for $A_G$.

Let $M_G\subset A_G$ denote the smallest Tambara ideal such that $M_G(G/K)$ contains every element of the form $x^{|K/H|}-\nm^K_H\res^K_H(x)$, where $x$ runs over all elements of $A_G(G/K)$.  We write $A_G^{\mathrm{mult}}$ for the quotient Tambara functor $A_G/M_G$. By construction, $A_G^{\mult}$ is multiplicatively cohomological. Let $I_G\subset A_G$ denote the Tambara ideal given by $I_G(G/K) = \ker(\res^K_e)$. By \Cref{example: kernel of eta} we have  $A_G/I_G\cong \ul{\mathbb{Z}}$.

\begin{lemma}
    There is an inclusion of ideals $M_G\subseteq I_G$.
\end{lemma}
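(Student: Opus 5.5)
Since $M_G$ is defined as the smallest Tambara ideal containing the elements $x^{|K/H|}-\nm^K_H\res^K_H(x)$, and $I_G$ is a Tambara ideal, it suffices to show that every such generator lies in $I_G$. Here $I_G$ is a Tambara ideal because it is the kernel of the initial map $\eta\colon A_G\to \ulZ$ of \Cref{example: kernel of eta}. So the plan is to take $H\leq K\leq G$ and $x\in A_G(G/K)$, and check that
\[
    \eta_{G/K}\bigl(x^{|K/H|}-\nm^K_H\res^K_H(x)\bigr)=0 .
\]

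The key observation is that $\eta$ is a map of Tambara functors, so it commutes with norms, restrictions, and the ring operations. Hence
\[
    \eta_{G/K}\bigl(x^{|K/H|}-\nm^K_H\res^K_H(x)\bigr) = \eta_{G/K}(x)^{|K/H|}-\nm^K_H\res^K_H\bigl(\eta_{G/K}(x)\bigr),
\]
where the operations on the right are those of $\ulZ$. In $\ulZ$ the restriction is the identity and $\nm^K_H$ is the $|K/H|$-th power map. Therefore, for $n=\eta_{G/K}(x)\in\bbZ$, the right-hand side is $n^{|K/H|}-n^{|K/H|}=0$. This is exactly the statement that $\ulZ$ is multiplicatively cohomological.

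Thus every generator of $M_G$ lies in $I_G=\ker\eta$. By minimality of $M_G$, we get $M_G\subseteq I_G$.

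There is no real obstacle here. The only point to be careful about is that "smallest Tambara ideal containing a set" makes sense and is contained in any Tambara ideal containing that set. The cleanest way to phrase this is: the composite $A_G\to\ulZ$ kills the generators, so it kills the ideal they generate. In other words, $M_G$ is contained in the kernel of any Tambara map out of $A_G$ that vanishes on the generators.
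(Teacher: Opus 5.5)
Your proposal is correct and follows the same argument as the paper: since $\ulZ$ is multiplicatively cohomological, the generators $x^{|K/H|}-\nm^K_H\res^K_H(x)$ map to zero under the unit map $A_G\to\ulZ$, so $M_G$ lies in its kernel $I_G$. You have simply written out the generator-level check that the paper's one-line proof leaves implicit.
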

\begin{proof}
    Since $\ulZ$ is multiplicatively cohomological, we see that $M_G$ is contained in the kernel of the unit map $A_G\to \ulZ$, which is $I_G$.
\end{proof}

The lemma implies that we have a sequence of maps of Tambara functors
\[
    A_G\xrightarrow{p} A_G^{\mult}\xrightarrow{q} \ulZ,
\]
where each map is surjective.  In the next section, we prove the following theorem.
\begin{theorem}\label{thm:main2}
    The map $q$ is an isomorphism.
\end{theorem}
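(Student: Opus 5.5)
The statement amounts to an equality of ideals $M_G=I_G$, since $q$ is already known to be surjective. By \Cref{example: kernel of eta}, $I_G(G/K)$ is generated by the elements $|K/H|-[K/H]$ for $H\leq K$, so it suffices to show that these elements vanish in $A_G^{\mult}(G/K)$.

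\textbf{Step 1: induction on $|G|$.} I will induct on the order of $G$. The restriction $R^G_K(A_G^{\mult})$ is a multiplicatively cohomological $K$-Tambara functor receiving the unique map from $A_K$, so by the universal property it receives a map from $A_K^{\mult}$. For $K<G$ the inductive hypothesis says $A_K^{\mult}\cong\ulZ$. Hence every proper level $A_G^{\mult}(G/K)$ is a quotient of $\bbZ$ in which $[K/H]=|K/H|$. This leaves only the top level $G/G$, where I must show $[G/H]=|G/H|$ for every proper $H<G$.

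\textbf{Step 2: the kernel of $q$ at $G/G$ is nil.} The map $q_{G/G}$ is compatible with $\res^G_e$, and in $\ulZ$ restriction is the identity. So any $x\in\ker q_{G/G}$ satisfies $\res^G_e(x)=0$ in $A_G^{\mult}(G/e)=\bbZ$. By \Cref{lemma: kernel of res is nilpotent}, every such $x$ is nilpotent.

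\textbf{Step 3: the kernel of $q$ at $G/G$ is torsion.} Apply the mark homomorphisms $\varphi_L\colon A(G)\to\bbZ$ (count $L$-fixed points) to the defining relations
\[
m^{|G/H|}=\nm^G_H\res^G_H(m),\qquad m\in\bbZ.
\]
One computes $\varphi_L(\nm^G_H(m))=m^{|L\backslash G/H|}$. Take $H=e$ and $m=2$: then $\varphi_L$ kills the relation only if $|L\backslash G|=|G|$, i.e. only if $L=e$. So $A_G^{\mult}(G/G)\otimes\mathbb{Q}$ is a quotient of $A(G)\otimes\mathbb{Q}\cong\prod_{(L)}\mathbb{Q}$ in which every factor except $\varphi_e$ dies. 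Thus $A_G^{\mult}(G/G)\otimes\mathbb{Q}\cong\mathbb{Q}$, and $\ker q_{G/G}$ is torsion. Combined with Step 2, each $d_H:=[G/H]-|G/H|$ is a nilpotent torsion element.

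\textbf{Step 4: the main obstacle, eliminating the torsion nilpotents.} This is where I expect the real work. First I will produce explicit relations. Take $x=\res^G_H(m)$ and expand the norm of a sum using the Tambara reciprocity formula, or equivalently decompose the $G$-set of colorings $G/H\to\{1,\dots,m\}$ into orbits. This gives
\[
m^{|G/H|}=\sum_{\text{orbits}}[G/\mathrm{Stab}]
\]
in $A_G^{\mult}(G/G)$. Next I will use Frobenius reciprocity together with Step 1. For $L<G$, the composite $\tr^G_L(a)$ depends only on the integer $a$, so the product $d_H\cdot[G/L]=\tr^G_L(\res^G_L d_H)$ vanishes, because $\res^G_L d_H=0$ by the double coset formula and the inductive hypothesis. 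It follows that the ideal generated by the $d_H$ satisfies $d_H\,d_{H'}=-|G/H'|\,d_H$. Combined with nilpotence, this forces $|G/H'|^N d_H=0$ for all large $N$.

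Running this over all $H'$, and in particular over the $p$-groups versus the Sylow subgroups, the indices $|G/H'|$ have $\gcd$ equal to $1$ as $H'$ ranges over Sylow subgroups. Hence $d_H=0$. If this gcd argument fails for some degenerate case, my fallback is to $p$-localize and apply the coloring relations with $m=1+p$, comparing coefficients. That would reduce the problem to the nilpotent case, which \cite{BQS} already handles.
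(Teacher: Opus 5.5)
Your argument is correct and is essentially the paper's. Step 4's identity $d_H\,d_{H'}^k=(-|G/H'|)^k d_H$, obtained from Frobenius reciprocity and nilpotence, is the elementwise form of the paper's argument: there, $\tau=\sum_{L<G}\mathrm{im}(\tr^G_L)$ annihilates $\ker(q)(G/G)$ and contains $|G/K|^N$, and the same Sylow gcd argument finishes (your mark computation in Step 3 and the coloring relations are correct but never used).

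The gcd step fails exactly when $G$ is a nontrivial $p$-group, since then the Sylow subgroup is $G$ itself and not proper. In that case you should cite the nilpotent case of \cite{BQS} directly, as the paper does, rather than the unexplained $m=1+p$ localization fallback.
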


Assuming this, we can prove \cref{thm:main}.

\begin{proof}[Proof of \cref{thm:main}]
    Let $S$ be any multiplicatively cohomological $G$-Tambara functor.  The kernel of the unique map $A_G\to S$ contains all the generators of $M_G$, and thus factors through $A_G^{\mult}$.  Thus, there exists a map of Tambara functors
    \[
        \ulZ\xrightarrow{q^{-1}} A_G^{\mult}\to S
    \]
    and together wtih \cref{prop: coh iff Zalg} we see that $S$ is additively cohomological.
\end{proof}
\begin{remark}
    It is not true that being multiplicatively cohomological is equivalent to being a module over $A_G^{\mult}$. Indeed, if this were true then \cref{thm:main2} would imply that being additively and multiplicatively cohomological are equivalent.  This is not the case, as we saw in \Cref{example: add not mult}.
\end{remark}

\section{Proof of \cref{thm:main2}}\label{sec: 4}

We set our sights toward proving \cref{thm:main2}. Since the map $q$ is surjective it suffices to prove that $\ker(q)$ is the zero ideal. Towards a contradiction, we suppose there exists finite groups for which the theorem is not true.  Then there must exist a finite group $G$ of minimal order for which the theorem is not true.  Thus, we know that \cref{thm:main2}, and hence also \cref{thm:main}, are true for all proper subgroups of $G$.

\begin{remark}\label{remark: not a p group}
    By \cite[Theorem 59]{BQS} we know that $G$ is a not a nilpotent group.  In particular, we will use later that $G$ is not a $p$-group for any prime $p$.
\end{remark}

The first step in analyzing $\ker(q)$ is to see what it looks like at proper subgroups.

\begin{lemma}\label{lemma: main 2 true on proper subgroups}
    For every proper subgroup $K<G$, we have
    \[
        q_K\colon A_G^{\mult}(G/K)\to\mathbb{Z}
    \]
    is an isomorphism.
\end{lemma}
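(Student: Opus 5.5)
We use the minimality of $G$ together with the restriction functor $R^G_K\colon \tamb_G\to\tamb_K$. Fix a proper subgroup $K<G$. The idea is that $R^G_K(A_G^{\mult})$ is a multiplicatively cohomological $K$-Tambara functor. By the inductive hypothesis, \cref{thm:main} holds for $K$, so $R^G_K(A_G^{\mult})$ is additively cohomological. By \cref{prop: coh iff Zalg} it then receives a map of $K$-Tambara functors from $\ulZ$. Evaluating at $K/K$ gives a ring map $\mathbb{Z}\to A_G^{\mult}(G/K)$, which is automatically the unit map. It remains to show that $A_G^{\mult}(G/K)$ is generated as a ring, or as an abelian group, by the image of this unit, since $q_K$ is surjective and a surjection onto $\mathbb{Z}$ out of a quotient of $\mathbb{Z}$ must be an isomorphism.

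\textbf{Step 1.} $R^G_K(A_G^{\mult})$ is multiplicatively cohomological. Its levels are $A_G^{\mult}(G/L)$ for $L\leq K$, and its norms and restrictions are those of $A_G^{\mult}$ between subgroups of $K$. The relation $\nm^L_H\res^L_H(x)=x^{|L/H|}$ holds in $A_G^{\mult}$ for all $H\leq L\leq G$, so it holds for those inside $K$.

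\textbf{Step 2.} Apply minimality. \cref{thm:main} holds for $K$, so $R^G_K(A_G^{\mult})$ is additively cohomological. Hence for every $H\leq K$ we get
\[
\tr^K_H\res^K_H(1)=|K/H|\cdot 1 \quad\text{in } A_G^{\mult}(G/K).
\]

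\textbf{Step 3.} Show $A_G^{\mult}(G/K)$ is generated by $1$. The group $A(K)=A_G(G/K)$ is generated additively by the classes $[K/H]=\tr^K_H\res^K_H(1)$. Since $p$ is a surjective map of Tambara functors commuting with transfers and restrictions, $A_G^{\mult}(G/K)$ is additively generated by the images $\tr^K_H\res^K_H(1)$. By Step 2 these are integer multiples of $1$. So $A_G^{\mult}(G/K)$ is a cyclic group generated by $1$, that is, a quotient of $\mathbb{Z}$. Since $q_K$ sends $1\mapsto1$ onto $\mathbb{Z}$, the group $A_G^{\mult}(G/K)$ must be all of $\mathbb{Z}$ and $q_K$ is an isomorphism.

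\textbf{Main obstacle.} The only delicate point is justifying that $R^G_K$ of a quotient of $A_G$ is exactly the structure described, with the same levels and operations. This follows from the stated natural isomorphisms $R^G_K(T)(K/L)\cong T(G/L)$, compatibly with the operations. Everything else is formal.
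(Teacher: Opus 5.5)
Your proof is correct and follows essentially the paper's route: restrict $A_G^{\mult}$ to $K$, observe it stays multiplicatively cohomological, invoke minimality to get that it is additively cohomological, and conclude that $A_G^{\mult}(G/K)$ is a quotient of $\mathbb{Z}$ that surjects onto $\mathbb{Z}$. The paper phrases your Step 3 as the factorization of $A_K\to R^G_K(A_G^{\mult})$ through $A_K^{\mult}\cong\ulZ$, whereas you check directly that the additive generators $[K/H]=\tr^K_H\res^K_H(1)$ land on $|K/H|\cdot 1$; the two arguments amount to the same thing.
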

\begin{proof}
    The quotient $p\colon A_G\to A_G^{\mult}$ restricts to a quotient map
    \[
        A_K\cong R^G_K(A_G)\to R^G_K(A_G^{\mult}).
    \]
    Since the target is multiplicatively cohomological, as this property is conserved by the restriction functor, the target is also additively cohomological, by the minimality of $G$.  It follows that this surjection factors through the $K$-Tambara functor $A^{\mathrm{mult}}_K\cong \ulZ$, and hence there is a surjective ring map 
    \[
        \mathbb{Z}\cong \ul{\bbZ}(K/K)\to R^G_K(A_G^{\mult})(K/K)\cong A_G^{\mult}(G/K).
    \]
    This implies that $A_G^{\mult}(G/K)\cong \mathbb{Z}/(p)$ for $p$ some prime or $0$. Of course, we already know that $q_k\colon A_G^{\mult}(G/K)\to \mathbb{Z}$ is a surjection, which rules out everything except $p=0$. Hence $q_K$ is a surjection $\mathbb{Z}\to \mathbb{Z}$, and is therefore an isomorphism. 
\end{proof}

\begin{corollary}\label{cor: kernel is nilpotent}
    The ideal $\ker(q)$ has
    \[
        \ker(q)(G/K)=0
    \]
    for all proper subgroups $K<G$. Moreover, every element in $\ker(q)(G/G)$ is nilpotent.
\end{corollary}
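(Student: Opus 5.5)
The first claim follows directly from \cref{lemma: main 2 true on proper subgroups}. For a proper subgroup $K<G$, the component $q_K\colon A_G^{\mult}(G/K)\to \ulZ(G/K)=\mathbb{Z}$ is an isomorphism. Its kernel, which by definition is $\ker(q)(G/K)$, is therefore zero.

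For the second claim, take $x\in \ker(q)(G/G)$ and apply \cref{lemma: kernel of res is nilpotent} to the multiplicatively cohomological $G$-Tambara functor $A_G^{\mult}$. Ideals are closed under restriction, so $\res^G_e(x)$ lies in $\ker(q)(G/e)$. Since $G$ is nontrivial, $e$ is a proper subgroup of $G$, and by the first part $\ker(q)(G/e)=0$. Thus $\res^G_e(x)=0$. Taking $K=G$ and $H=e<G$ in \cref{lemma: kernel of res is nilpotent} then gives $x^{|G|}=\nm^G_e\res^G_e(x)=0$, so $x$ is nilpotent.

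We should check that $G$ is nontrivial. For $G=e$ both $A_G^{\mult}$ and $\ulZ$ are just $\mathbb{Z}$, so $q$ is an isomorphism. Hence the minimal counterexample $G$ cannot be trivial, and in fact it is not nilpotent by \cref{remark: not a p group}.

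Nothing in this argument is a real obstacle. The only points to keep track of are that ideals are closed under restriction and that the minimality of $G$ is what puts \cref{lemma: main 2 true on proper subgroups} in force at the proper subgroup $e$.
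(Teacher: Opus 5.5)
Your proof is correct and follows the paper's argument: the first claim comes straight from \cref{lemma: main 2 true on proper subgroups}, and nilpotence follows from \cref{lemma: kernel of res is nilpotent} once restriction to a proper subgroup (you take $e$) kills $\ker(q)(G/G)$. Your explicit check that the minimal counterexample $G$ is nontrivial, so that a proper subgroup exists at all, is a detail the paper leaves implicit.
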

\begin{proof}
    The first claim is immediate from the preceding lemma.  The claim that every element in $\ker(q)(G/G)$ is nilpotent follows from \cref{lemma: kernel of res is nilpotent}, as every element in $\ker(q)(G/G)$ is in the kernel of every restriction map, since $\ker(q)(G/K)=0$ for all proper subgroups $K$.
\end{proof}

Thus, it remains to show that $\ker(q)(G/G)=0$ as well. Let
\[
    \tau = \sum\limits_{H<G} \mathrm{im}(\tr^G_H\colon A_G^{\mult}(G/H)\to A_G^{\mult}(G/G)),
\]
which is is a ring ideal of $A_G^{\mathrm{mult}}(G/G)$ by Frobenius reciprocity.

\begin{proposition}\label{prop: tau annihilates}
    For any $x\in \ker(q)(G/G)$ and any $y\in \tau$ we have $xy=0$.
\end{proposition}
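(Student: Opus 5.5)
By additivity it suffices to treat a single generator $y=\tr^G_H(z)$ of $\tau$, with $H<G$ proper and $z\in A_G^{\mult}(G/H)$. The argument is Frobenius reciprocity combined with \cref{cor: kernel is nilpotent}.

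Fix $x\in\ker(q)(G/G)$. Frobenius reciprocity (\cref{prop: compatibility}(3)) gives
\[
    x\cdot \tr^G_H(z) = \tr^G_H\bigl(\res^G_H(x)\cdot z\bigr).
\]
Since $\ker(q)$ is a Tambara ideal, it is closed under restriction, so $\res^G_H(x)\in\ker(q)(G/H)$. By \cref{cor: kernel is nilpotent} this group is zero because $H$ is proper. Hence $\res^G_H(x)=0$, and therefore
\[
    xy=\tr^G_H(0)=0.
\]
Summing over the generators of $\tau$ gives the general case.

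There is no real obstacle here. The only point to confirm is that $\ker(q)$ is closed under restriction. This holds because $q$ is a map of Tambara functors and so commutes with restriction: if $q_G(x)=0$ then $q_H(\res^G_H x)=\res^G_H q_G(x)=0$.
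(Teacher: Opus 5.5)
Your proof is correct and is the same as the paper's: reduce to a single generator $\tr^G_H(z)$ with $H<G$ proper, apply Frobenius reciprocity, and note that $\res^G_H(x)\in\ker(q)(G/H)=0$ by \cref{cor: kernel is nilpotent}. You also check explicitly that $\ker(q)$ is closed under restriction, which the paper leaves implicit.
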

\begin{proof}
    It suffices to prove that for any $y\in A_G^{\mult}(G/H)$, with $H<G$ a proper subgroup, we have $\tr^G_H(y)\cdot x=0$.  Using Frobenius reciprocity we have
    \[
        \tr^G_H(y)\cdot x = \tr^G_H(y\cdot \res^G_H(x))
    \]
    but $\res^G_H(x)\in \ker(q)(G/H)$ and hence $\res^G_H(x)=0$ by the previous corollary.
\end{proof}

To finish the proof of \cref{thm:main2}, it suffices to prove that $1\in \tau$.  For generic elements $x\in A_G(G/G)$ we will write $\ol{x}$ for the class in $A_G^{\mult}(G/G)$. We will not do this with integers.
\begin{lemma}\label{lemma: product in AGMult}
    Let $K<G$ be a proper subgroup and let $H\leq K$.  In $A_G^{\mult}(G/G)$ we have $\ol{G/H} = |K/H|\cdot \ol{G/K}$.
\end{lemma}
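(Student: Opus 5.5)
We want $\ol{G/H} = |K/H|\cdot\ol{G/K}$ in $A_G^{\mult}(G/G)$ for $H\leq K<G$. The approach is to write both sides as transfers from level $G/K$ and then use \cref{lemma: main 2 true on proper subgroups} at that proper level.

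First, in the Burnside Tambara functor $A_G$ we have $G/H = \tr^G_H\res^G_H(1) = \tr^G_K\tr^K_H\res^K_H(1)$ and $G/K = \tr^G_K(1)$, by associativity of transfers and restrictions. Here $1$ denotes the unit in $A_G(G/K)$. In $A_G(G/K)\cong A(K)$, the element $\tr^K_H\res^K_H(1)$ is the $K$-set $[K/H]$.

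Since $p\colon A_G\to A_G^{\mult}$ is a map of Tambara functors, it commutes with transfers. Therefore
\[
\ol{G/H} = \tr^G_K\bigl(p_{G/K}([K/H])\bigr)
\qquad\text{and}\qquad
\ol{G/K} = \tr^G_K(1).
\]

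Now apply \cref{lemma: main 2 true on proper subgroups}. Since $K<G$ is proper, $q_K\colon A_G^{\mult}(G/K)\to\bbZ$ is a ring isomorphism. The composite $q_K\circ p_{G/K}$ is the augmentation $\eta_{G/K}$ of \cref{example: kernel of eta}, which sends $[K/H]$ to $|K/H|$. Hence $p_{G/K}([K/H]) = |K/H|\cdot 1$ in $A_G^{\mult}(G/K)$.

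Transfers are additive, so
\[
\ol{G/H} = \tr^G_K(|K/H|\cdot 1) = |K/H|\cdot\tr^G_K(1) = |K/H|\cdot \ol{G/K}.
\]

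The only substantive input is the identification $A_G^{\mult}(G/K)\cong\bbZ$ for proper $K$, which comes from minimality of $G$. Everything else is bookkeeping with transfers.
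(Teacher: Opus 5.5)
Your proof is correct and follows essentially the same route as the paper: write $\tr^G_H = \tr^G_K\tr^K_H$, use \cref{lemma: main 2 true on proper subgroups} to get $\tr^K_H(1) = |K/H|$ in $A_G^{\mult}(G/K)$, and pull the integer out of the additive transfer $\tr^G_K$. The only difference is cosmetic. You justify the middle step through the ring isomorphism $q_K$ and the identity $q_K\circ p_{G/K} = \eta_{G/K}$, whereas the paper phrases the same step as $A_G^{\mult}$ being additively cohomological on proper subgroups.
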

\begin{proof}
    We have 
    \[
        \ol{G/H} = \ol{\tr^G_H(1)} = \ol{\tr^G_K\tr^K_H(1)} = \ol{\tr^G_K(|K/H|)} = |K/H|\cdot \ol{\tr^G_K(1)} = |K/H|\cdot \ol{G/K}
    \]
    where the third equality uses the fact that we know $A_G^{\mult}$ is additively cohomological on all proper subgroups, by \cref{lemma: main 2 true on proper subgroups}.
\end{proof}

\begin{corollary}\label{cor: G/K products}
    Let $K<G$ be a proper subgroup, and let $X$ be any virtual finite $G$-set.  We have $\overline{G/K}\cdot \ol{X} = |X|\cdot \overline{G/K}$.
\end{corollary}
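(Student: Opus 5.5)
By linearity in $X$ (both sides are additive in $X$, and $|X|$ is additive on virtual sets), it suffices to treat a transitive $G$-set $X=G/L$ for $L\leq G$. The plan is to move the product down to the proper subgroup $K$ via Frobenius reciprocity, where we already know the answer by \cref{lemma: main 2 true on proper subgroups}, and then push back up.

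First, in $A_G$ we have $G/K=\tr^G_K(1)$, so Frobenius reciprocity gives
\[
    \ol{G/K}\cdot \ol{X} = \ol{\tr^G_K(1)}\cdot \ol{X} = \tr^G_K\bigl(\res^G_K(\ol{X})\bigr),
\]
where $\tr$ and $\res$ now denote the operations in $A_G^{\mult}$ (these commute with $p$). The element $\res^G_K(\ol{X})$ lives in $A_G^{\mult}(G/K)$. By \cref{lemma: main 2 true on proper subgroups}, the map $q_K\colon A_G^{\mult}(G/K)\to\mathbb{Z}$ is an isomorphism. Since $q$ is a map of Tambara functors, we get
\[
    q_K(\res^G_K\ol{X}) = \res^G_K(q_G(\ol{X})) = q_G(\ol{X}).
\]
Under $A_G\to\ulZ$, the value $q_G(\ol X)$ is $|X|$ by \Cref{example: kernel of eta}. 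Because $q_K$ is a ring isomorphism with $\mathbb{Z}$, this forces $\res^G_K(\ol{X}) = |X|\cdot 1$ in $A_G^{\mult}(G/K)$.

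Substituting back and using additivity of the transfer,
\[
    \ol{G/K}\cdot\ol{X} = \tr^G_K(|X|\cdot 1) = |X|\cdot \tr^G_K(1) = |X|\cdot\ol{G/K}.
\]
There is no real obstacle here. The only point needing care is that the identification of $\res^G_K\ol{X}$ with the integer $|X|$ uses injectivity of $q_K$, which holds exactly because $K$ is proper.
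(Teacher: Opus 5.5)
Your proof is correct, and it takes a slightly different route from the paper's. The paper first reduces to genuine (non-virtual) finite $G$-sets. It then uses Mackey's double coset formula to write $(G/K)\times X$ as a disjoint union of orbits $G/H$ with $H\leq K$, applies \cref{lemma: product in AGMult} ($\ol{G/H}=|K/H|\cdot\ol{G/K}$) to each orbit, and counts elements to conclude.

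Your argument is the algebraic form of the same idea:
\begin{itemize}
\item the orbit decomposition of $(G/K)\times X$ is just the set-level Frobenius reciprocity $(G/K)\times X\cong G\times_K \res^G_K X$;
\item \cref{lemma: product in AGMult} is itself a consequence of $q_K$ being an isomorphism for proper $K$.
\end{itemize}
You apply Frobenius reciprocity directly in $A_G^{\mult}$ and identify $\res^G_K\ol{X}=|X|\cdot 1$ via injectivity of $q_K$. This lets you bypass both the orbit bookkeeping and \cref{lemma: product in AGMult}, and it handles virtual $X$ uniformly. In particular, your opening reduction to transitive $X$ is never used and can be dropped. The paper's version stays at the level of $G$-sets and reuses a lemma it has just proved; yours is shorter and isolates exactly what is needed, namely \cref{lemma: main 2 true on proper subgroups}.
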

\begin{proof}
    The claim for virtual finite $G$-sets follows immediately once is have been established for actual finite $G$-sets, so assume $X$ is an actual finite $G$-set. Using Mackey's double coset formula we see that $(G/K)\times X$ is a finite $G$-set which can be written as the union of orbits $G/H$, where $H\leq K$.  The claim now follows from counting the number of elements on either side of the equation and applying \cref{lemma: product in AGMult}.
\end{proof}

\begin{proposition}\label{prop: maximal proper torsion}
    For any proper subgroup $K<G$, the element $|G/K|-\ol{G/K}$ is torsion.  More precisely, there is an integer $n$ such that  $|G/K|^{n}\cdot(|G/K|-\overline{G/K})=0$.
\end{proposition}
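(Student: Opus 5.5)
The plan is to combine two facts about the element $y=\ol{G/K}$: the quadratic relation coming from \cref{cor: G/K products}, and the nilpotence coming from \cref{lemma: kernel of res is nilpotent}. Throughout, write $m=|G/K|$ and $z=m-\ol{G/K}\in A_G^{\mult}(G/G)$. The goal is then to show $m^{m-1}z=0$, which gives the statement with $n=m-1$.

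\emph{Step 1: $z$ is nilpotent.} Since $q$ is a map of Tambara functors and $\ul{\bbZ}$ counts cardinalities, we have $q(\ol{G/K})=m$, so $q_G(z)=0$. Because $q$ commutes with restriction, $q_K(\res^G_K(z))=0$. By \cref{lemma: main 2 true on proper subgroups}, $q_K$ is an isomorphism (equivalently, $\ker(q)(G/K)=0$ by \cref{cor: kernel is nilpotent}), so $\res^G_K(z)=0$. Since $A_G^{\mult}$ is multiplicatively cohomological, \cref{lemma: kernel of res is nilpotent} gives
\[
z^{m}=\nm^G_K\res^G_K(z)=\nm^G_K(0)=0 .
\]

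\emph{Step 2: a quadratic relation for $z$.} Apply \cref{cor: G/K products} with $X=G/K$, so that $|X|=m$. This gives $\ol{G/K}^{\,2}=m\,\ol{G/K}$, that is, $y(m-y)=0$ for $y=\ol{G/K}$. Substituting $y=m-z$ yields $(m-z)z=0$, so $z^2=mz$.

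\emph{Step 3: combine.} From $z^2=mz$, induction gives $z^k=m^{k-1}z$ for all $k\geq 1$. Taking $k=m$ and using Step 1,
\[
0=z^m=m^{m-1}z ,
\]
so $|G/K|^{m-1}\bigl(|G/K|-\ol{G/K}\bigr)=0$, as claimed.

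I do not expect a serious obstacle. The only points needing care are in Step 1: that $q$ commutes with $\res^G_K$ and sends $\ol{G/K}$ to $|G/K|$, and that $\nm^G_K(0)=0$. The latter holds because $\nm^G_K(0)=\nm^G_K(0\cdot 0)=\nm^G_K(0)^2$ is idempotent and nilpotent (alternatively, by the standard Tambara axiom for norms of zero).
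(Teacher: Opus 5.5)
Your argument is correct and follows the paper's route: nilpotence of $|G/K|-\ol{G/K}$ combined with \cref{cor: G/K products}. The paper gets the nilpotence from \cref{cor: kernel is nilpotent}, whereas you read it off $\res^G_K$ directly. Your relation $z^2=mz$ is a tidy replacement for the paper's binomial expansion and gives the explicit exponent $n=|G/K|-1$.

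One small point: your first justification of $\nm^G_K(0)=0$ (``idempotent and nilpotent'') gives no reason for nilpotence, and the multiplicatively cohomological relation alone only yields $\nm^G_K(0)^m=\nm^G_K(0)$. Rely instead on the standard Tambara fact you mention as the alternative, which the paper also uses implicitly in \cref{lemma: kernel of res is nilpotent}.
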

\begin{proof}
    By \cref{cor: kernel is nilpotent}, we know that $|G/K|-\ol{G/K}$ is nilpotent, so there is some sufficiently large $n$ so that $(|G/K|-\ol{G/K})^{n+1}=0$.  Expanding this out, we have
    \[
        0 =(|G/K|-\ol{G/K})^{n+1} = |G/K|^{n+1}-\overline{G/K}\cdot \ol{X} 
    \]
    where $X$ is some virtual finite $G$-set of size $|G/K|^{n}$. Applying \cref{cor: G/K products}, we obtain
    \[
    0 = |G/K|^{n+1}-\overline{G/K}\cdot \ol{X}  = |G/K|^{n+1}-|G/K|^{n}\cdot\overline{G/K} = |G/K|^{n}\cdot(|G/K|-\overline{G/K})
    \]
    proving the proposition.
\end{proof}
\begin{corollary}\label{cor: the useful one}
    For any proper subgroup $K<G$, there is some integer $N$ such that $|G/K|^N$ is in the image of $\tr^G_K$.
\end{corollary}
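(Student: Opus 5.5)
The plan is to read this off directly from \cref{prop: maximal proper torsion}, working in $A_G^{\mult}(G/G)$ and with the image of $\tr^G_K\colon A_G^{\mult}(G/K)\to A_G^{\mult}(G/G)$. By \cref{prop: maximal proper torsion} there is an integer $n$ with
\[
    |G/K|^{n}\cdot\bigl(|G/K|-\ol{G/K}\bigr)=0,
\]
which rearranges to
\[
    |G/K|^{n+1} = |G/K|^n\cdot \ol{G/K}.
\]
So I would take $N=n+1$ and show that the right-hand side lies in the image of $\tr^G_K$.

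For that, note that $\ol{G/K} = \ol{\tr^G_K(1)} = \tr^G_K(1)$, since $p\colon A_G\to A_G^{\mult}$ is a map of Tambara functors and so commutes with transfers. The transfer is additive, so its image is closed under integer multiples. Hence
\[
    |G/K|^n\cdot \ol{G/K} = \tr^G_K\bigl(|G/K|^n\bigr),
\]
where $|G/K|^n$ on the right means the integer multiple of $1\in A_G^{\mult}(G/K)$. Alternatively, one can use Frobenius reciprocity, $m\cdot\tr^G_K(1)=\tr^G_K(\res^G_K(m))$, together with the fact that restriction is a ring map and so sends the integer $m$ to $m$. Either way, $|G/K|^{n+1}=\tr^G_K(|G/K|^n)$ lies in the image of $\tr^G_K$.

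There is no real obstacle here. The substantive input, the torsion statement, is already \cref{prop: maximal proper torsion}. The only point needing care is to interpret the integers as elements of $A_G^{\mult}(G/G)$ via the unit, so that the equation above makes sense in that ring.
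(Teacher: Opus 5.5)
Your proposal is correct and matches the paper's proof: you take $n$ from \cref{prop: maximal proper torsion} and observe that $\tr^G_K(|G/K|^n)=|G/K|^n\cdot\ol{G/K}=|G/K|^{n+1}$, so $N=n+1$ works. Your extra justifications (that $p$ commutes with transfers, so $\ol{G/K}=\tr^G_K(1)$, and that additivity of $\tr^G_K$ handles integer multiples) are exactly the steps left implicit in the paper's one-line computation.
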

\begin{proof}
    Pick an $n$ large enough so that $|G/K|^n\cdot(|G/K|-\ol{G/K})=0$.  Then we  have 
    \[
        \tr^G_K(|G/K|^n) = |G/K|^n\cdot \tr^G_K(1) = |G/K|^n\cdot \ol{G/K} = |G/K|^{n+1}
    \]
    where the last equality uses the assumption on $n$.
\end{proof}

Finally we can give the proof of \cref{thm:main2}, which completes the proof of \cref{thm:main}.
\begin{proof}[Proof of \cref{thm:main2}.]
    By \cref{prop: tau annihilates}, it suffices to prove that $1\in \tau$. Let $p_1,\dots,p_n$ be all the distinct prime numbers which divide $|G|$, and let $P_1,\dots,P_n$ be the corresponding Sylow subgroups.  Note that by \cref{remark: not a p group}, we may assume  $n>1$. By \cref{cor: the useful one}, there are positive integers $N_i$ such that $|G/P_i|^{N_i}\in \tau$ for each $i$.  Since the the gcd of the list $|G/P_1|^{N_1},\dots,|G/P_n|^{N_n}$ is $1$, we must have that $1\in \tau$.
\end{proof}

\section{Jacobson radicals and Nakayama's lemma}\label{sec: nakayama}

In this section we explain how being multiplicatively cohomological is related to the Jacobson radical and Nakayama's lemma.  Recall that the Jacobson radical of a commutative ring $R$, which we denote by $J(R)$, is the intersection of all maximal ideals.  Before turning to Tambara functors, we begin by looking at the related notion of $G$-rings, by which we mean a ring with an action by $G$ through ring automorphisms. If $R$ is a $G$-ring, and $I\leq R$ is an ideal, we say that $I$ is $G$-invariant if for all $x\in I$ and $g\in G$ we have $g\cdot x\in I$. An ideal $\mf{m}\subset R$ is \textit{maximal $G$-invariant}, if there is no proper $G$-invariant ideal $L$ with $I\subsetneq L$.

\begin{proposition}\label{prop: maximal G ideals}
    Let $R$ be a $G$-ring. There is a bijection between maximal $G$-invariant ideals of $R$ and orbits of maximal ideals of $R$.  This bijection is made explicit by sending a maximal ideal $\mf{m}\subset R$ to $\bigcap_{g\in G} g\cdot  \mf{m}$.
\end{proposition}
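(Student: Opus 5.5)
The plan is to show that the map $\mf{m}\mapsto \bigcap_{g\in G} g\cdot\mf{m}$ is well defined into maximal $G$-invariant ideals, is constant on orbits, and that the induced map on orbits is both surjective and injective. Throughout, $G$ is finite, which will be used in the key step.

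\textbf{Well-definedness.} Since each $g$ acts by a ring automorphism, every $g\cdot\mf{m}$ is a maximal ideal. The intersection $I_{\mf{m}}:=\bigcap_{g} g\cdot\mf{m}$ is therefore a proper ideal, and it is clearly $G$-invariant, because $h$ permutes the ideals $g\cdot\mf{m}$. It depends only on the orbit of $\mf{m}$.

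\textbf{Key step: maximality of $I_{\mf{m}}$.} Suppose $L\supsetneq I_{\mf{m}}$ is $G$-invariant. I first claim $L\not\subseteq g\cdot\mf{m}$ for every $g$. Indeed, if $L\subseteq g\cdot\mf{m}$ for one $g$, then by invariance $L=h\cdot L\subseteq hg\cdot\mf{m}$ for all $h$, so $L\subseteq I_{\mf{m}}$, a contradiction. Hence $L+g\cdot\mf{m}=R$ for each $g$. Now let $g\cdot\mf{m}$ run over the finitely many distinct conjugates $\mf{m}_1,\dots,\mf{m}_r$. These are pairwise comaximal, so the Chinese remainder theorem gives $R/I_{\mf{m}}\cong\prod_i R/\mf{m}_i$. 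The image of $L$ in this product is an ideal not contained in any factor's kernel. Since ideals of a finite product of fields are products of ideals of the factors, the image is everything, and so $L=R$. Thus $I_{\mf{m}}$ is maximal $G$-invariant. I expect this to be the main (though mild) obstacle, and finiteness of $G$ is exactly what makes CRT apply.

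\textbf{Surjectivity.} Let $I$ be a maximal $G$-invariant ideal. Since $I$ is proper, it is contained in some maximal ideal $\mf{m}$. Then $I=g\cdot I\subseteq g\cdot\mf{m}$ for all $g$, so $I\subseteq I_{\mf{m}}$. Since $I_{\mf{m}}$ is proper and $G$-invariant, maximality gives $I=I_{\mf{m}}$.

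\textbf{Injectivity.} Suppose $I_{\mf{m}}=I_{\mf{n}}$. Then $\prod_g g\cdot\mf{m}\subseteq I_{\mf{m}}\subseteq\mf{n}$. Because $\mf{n}$ is prime and the product is finite, $g\cdot\mf{m}\subseteq\mf{n}$ for some $g$, and maximality forces $g\cdot\mf{m}=\mf{n}$. So $\mf{m}$ and $\mf{n}$ lie in the same orbit, which completes the bijection.
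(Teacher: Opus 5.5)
Your proof is correct and complete, but it takes a different route from the paper. The paper gives no direct argument. It invokes \cite[Proposition 6.4]{CMQSV:24}, an order-preserving bijection between the poset $\Spec_G(R)$ and the orbit poset $\Spec(R)/G$ given by $\mf{p}\mapsto\bigcap_{g} g\cdot\mf{p}$, and then matches maximal elements on the two sides. That approach is short and shows the statement as the ``closed points'' shadow of a full correspondence between the $G$-equivariant spectrum and the orbit space. However, it leaves two identifications implicit, neither of which you need. First, every maximal $G$-invariant ideal must actually lie in $\Spec_G(R)$, so that these are exactly its maximal elements; this is the usual comaximality argument. Second, the maximal elements of $\Spec(R)/G$ must be the orbits of maximal ideals.

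Your argument works only with maximal ideals. Maximality of $I_{\mf{m}}$ comes from $L$ being comaximal with each conjugate of $\mf{m}$. The CRT step could be shortened: $L$ is then comaximal with $\prod_i\mf{m}_i\subseteq I_{\mf{m}}\subseteq L$, so $L=R$. Injectivity comes from the primality of $\mf{n}$ applied to the finite product $\prod_g g\cdot\mf{m}$. The result is self-contained, makes explicit where finiteness of $G$ is used, and proves exactly what the next corollary on the Jacobson radical of a $G$-ring needs. What it does not give is the correspondence for non-maximal points of the spectrum.
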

\begin{proof}
    This is a consequence of \cite[Proposition 6.4]{CMQSV:24}.  In the notation of \textit{loc.\ cit.\ } we have that maximal $G$-invariant ideals are the maximal elements of the poset $\Spec_G(R)$, whereas $G$-orbits of maximal ideals of $R$ are the maximal elements of $\Spec(R)/G$.  Since these two posets are in order preserving bijection, via the map given in the statement, the claim follows.
\end{proof}

\begin{corollary}\label{cor: jac of G ring}
    The intersection of all maximal $G$-invariant ideals in a $G$-ring $R$ is the Jacobson radical of $R$.
\end{corollary}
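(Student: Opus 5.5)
We prove the two inclusions directly, using \cref{prop: maximal G ideals} to describe the maximal $G$-invariant ideals. Write $\mathcal{M}$ for the set of maximal ideals of $R$ and $\mathcal{M}_G$ for the set of maximal $G$-invariant ideals. By \cref{prop: maximal G ideals}, every element of $\mathcal{M}_G$ has the form $\bigcap_{g\in G} g\cdot\mf{m}$ for some $\mf{m}\in\mathcal{M}$. Conversely, every $\mf{m}\in\mathcal{M}$ arises this way, since each orbit of maximal ideals corresponds to some maximal $G$-invariant ideal.

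The computation then amounts to regrouping an intersection:
\[
    \bigcap_{I\in \mathcal{M}_G} I \;=\; \bigcap_{\mf{m}\in\mathcal{M}}\ \bigcap_{g\in G} g\cdot\mf{m}.
\]
The right-hand side is an intersection of maximal ideals. This is because $g\cdot\mf{m}$ is again a maximal ideal, as $g$ acts by a ring automorphism. So the right-hand side contains $J(R)$.

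It is also contained in $J(R)$. For each $\mf{m}\in\mathcal{M}$, the $g=e$ term of the inner intersection is $\mf{m}$ itself, so the whole expression lies in every maximal ideal. Hence it equals $J(R)$.

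The only point needing care is surjectivity of the map $\mf{m}\mapsto\bigcap_g g\cdot\mf{m}$ onto $\mathcal{M}_G$, and the fact that every $\mf{m}$ occurs. Both are exactly the content of the bijection in \cref{prop: maximal G ideals}, so no real obstacle remains.

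One degenerate case should be mentioned. If $R=0$, both sides are the empty intersection, which is $R$, so the statement still holds.
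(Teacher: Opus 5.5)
Your proposal is correct and follows essentially the same route as the paper, which also uses the bijection of \cref{prop: maximal G ideals} to identify the intersection of the maximal $G$-invariant ideals with the intersection of all maximal ideals. You additionally write out the regrouping of the intersection and the two inclusions (and the $R=0$ case), which the paper leaves implicit.
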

\begin{proof}
    By \cref{prop: maximal G ideals}, the intersection of the maximal $G$-invariant ideals is the same as the of all maximal ideals of $R$, which is the Jacobson radical of $R$.
\end{proof}

We now turn to studying the Jacobson radical of a Tambara functor $T$. An ideal $\mf{m}\subset T$ is maximal if $\mf{m}\neq T$ and $\mf{m}\subseteq I$ implies $\mf{m}=I$ for any other ideal $I\subsetneq T$. 

\begin{definition}
    The \emph{Jacobson radical} of a Tambara functor $T$, denoted $J(T)$, is the intersection of all maximal Tambara ideals.
\end{definition}

We begin with some basic facts about $J(T)$.

\begin{lemma}\label{lemma: functoriality}
    Let $f\colon T\to S$ be a surjective map of Tambara functors.  Then $J(T)\subseteq f^{-1}(J(S))$.
\end{lemma}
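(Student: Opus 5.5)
We use the correspondence between ideals of $S$ and ideals of $T$ containing $\ker(f)$, as for commutative rings. Let $K=\ker(f)$. We claim that for every maximal ideal $\mf{n}\subset S$, the preimage $f^{-1}(\mf{n})$ is a maximal ideal of $T$. Granting this, we get
\[
    J(T)=\bigcap_{\mf{m}\text{ maximal in } T}\mf{m}\subseteq \bigcap_{\mf{n}\text{ maximal in } S} f^{-1}(\mf{n}) = f^{-1}\Big(\bigcap_{\mf{n}}\mf{n}\Big)=f^{-1}(J(S)),
\]
since the right-hand intersection runs over a subfamily of the maximal ideals of $T$, and preimages commute with intersections levelwise. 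If $S$ has no maximal ideals, then $J(S)=S$ (the empty intersection) and the inclusion is trivial.

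To prove the claim, first check that $f^{-1}(\mf{n})$ is an ideal. It is the levelwise kernel of the composite $T\to S\to S/\mf{n}$, which is a map of Tambara functors. Here $S/\mf{n}$ is the quotient Tambara functor, and it exists because $\mf{n}$ is by definition a kernel of some map $S\to S'$, so we may use the image of that map. The ideal $f^{-1}(\mf{n})$ is proper because $\mf{n}$ is proper and $f$ is levelwise surjective: if $1\in f^{-1}(\mf{n})(G/H)$ then $1\in\mf{n}(G/H)$.

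Next, for maximality, suppose $I$ is an ideal with $f^{-1}(\mf{n})\subseteq I\subsetneq T$, and consider the levelwise image $f(I)$. This is the step needing care: we must show $f(I)$ is a Tambara ideal, i.e. a kernel of some map. Write $I=\ker(g\colon T\to U)$. Since $K\subseteq f^{-1}(\mf{n})\subseteq I$, the map $g$ kills $\ker(f)$. Because $f$ is levelwise surjective, $S\cong T/K$, and $g$ factors uniquely through $S$ via $\bar g$, defined levelwise by $\bar g(f(t))=g(t)$. This $\bar g$ is well defined and commutes with transfers, norms, restrictions and conjugations, because $f$ is surjective and those operations commute with $f$ and $g$. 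Then $\ker(\bar g)=f(I)$, so $f(I)$ is an ideal of $S$ containing $\mf{n}$.

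Finally, surjectivity gives $f^{-1}(f(I))=I+K=I$. Hence $f(I)$ is proper, because $I$ is, and it contains $\mf{n}$. By maximality of $\mf{n}$ we get $f(I)=\mf{n}$, and applying $f^{-1}$ gives $I=f^{-1}(\mf{n})$. This proves that $f^{-1}(\mf{n})$ is maximal, which completes the argument.
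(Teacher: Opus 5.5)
Your proof is correct and takes the same approach as the paper. Both arguments use the correspondence between ideals of $S$ and ideals of $T$ containing $\ker(f)$ to see that the maximal ideals of $S$ pull back to a subfamily of the maximal ideals of $T$, and then intersect; the paper simply asserts that correspondence, whereas you verify the half you need via the factorization $\bar g$ and the identity $f^{-1}(f(I))=I$.
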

\begin{proof}
    Let $I$ be the kernel of $f$.  Then the poset of ideals of $S$ are in bijection, via $f^{-1}$, with the poset of ideals which contain $I$. Thus we have the following chain of inclusions:
    \[
        J(T) = \bigcap_{\mf{m}\subset T \textrm{ maximal}} \mf{m}\subseteq \bigcap_{I\leq \mf{m}\subset T \textrm{ maximal}} \mf{m} = \bigcap_{\mf{p}\subset S \textrm{ maximal}}f^{-1}(\mf{p}) = f^{-1}(J(S)).
    \]
\end{proof}

\begin{lemma}\label{lemma: maximal ideal looks like this}
    Let $\mf{m}\subset T$ be a maximal ideal.  Then for any $K\leq G$ we have $\mf{m}(G/K) = (\res^K_e)^{-1}(\mf{m}(G/e))$.
\end{lemma}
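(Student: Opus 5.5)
The plan is to show that the levelwise preimage of $\mf{m}(G/e)$ is itself a proper Tambara ideal containing $\mf{m}$, and then to conclude by maximality. Concretely, define $I$ by
\[
    I(G/K) := (\res^K_e)^{-1}(\mf{m}(G/e)) \subseteq T(G/K)
\]
for each $K\leq G$. Since $\mf{m}$ is closed under restriction, $\res^K_e(\mf{m}(G/K))\subseteq \mf{m}(G/e)$, so $\mf{m}\subseteq I$ levelwise. If I can show that $I$ is a Tambara ideal and that $I\neq T$, then maximality of $\mf{m}$ forces $I=\mf{m}$, which is exactly the statement.

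To check that $I$ is a Tambara ideal, I will use Nakaoka's characterization from \cite{nakaoka:2012}. It says that a collection of ring ideals closed under restriction, transfer, norm, and conjugation is the kernel of a map of Tambara functors. Each $I(G/K)$ is a ring ideal, being the preimage of an ideal under the ring map $\res^K_e$.

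The closure properties are then checked one at a time.
\begin{itemize}
    \item \textbf{Restriction.} This follows from associativity: $\res^L_e\res^K_L=\res^K_e$.
    \item \textbf{Conjugation.} This uses (4) of \cref{prop: compatibility} together with the fact that $\mf{m}(G/e)$ is $G$-invariant, since $\mf{m}$ is closed under conjugation.
    \item \textbf{Transfer.} For $x\in I(G/H)$ with $H\leq K$, the additive double coset formula with $L=e$ expresses $\res^K_e\tr^K_H(x)$ as a sum of terms $c_{\gamma}\res^H_e(x)$. Each term lies in $\mf{m}(G/e)$ by $G$-invariance, so the sum does too.
    \item \textbf{Norm.} The multiplicative double coset formula expresses $\res^K_e\nm^K_H(x)$ as a nonempty product of the elements $c_\gamma\res^H_e(x)\in\mf{m}(G/e)$. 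A nonempty product with a factor in an ideal lies in that ideal.
\end{itemize}

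It remains to see that $I\neq T$, and it suffices to show that $1\notin \mf{m}(G/e)$. Suppose instead that $1\in\mf{m}(G/e)$. Then for every $K$ we have $1=\nm^K_e(1)\in \mf{m}(G/K)$, because norms are multiplicative and hence unital. That would make $\mf{m}=T$, contradicting properness. Hence $1\notin I(G/K)$ for every $K$, so $I$ is proper and maximality gives $I=\mf{m}$.

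The only step that needs care is the ideal criterion, namely making sure that closure under the norm maps (and not some more elaborate condition involving $\nm(a+x)-\nm(a)$) suffices for a levelwise family of ideals to be a Tambara ideal. I expect this to be the main point to cite precisely. If the criterion in \cite{nakaoka:2012} is phrased with the stronger condition, I will verify it directly: expand $\res^K_e\nm^K_H(a+x)$ via the multiplicative double coset formula as a product of the elements $c_\gamma\res^H_e(a)+c_\gamma\res^H_e(x)$. Modulo $\mf{m}(G/e)$ this product agrees with $\res^K_e\nm^K_H(a)$, which shows $\nm^K_H(a+x)-\nm^K_H(a)\in I(G/K)$.
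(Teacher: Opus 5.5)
Your proposal is correct and follows the same route as the paper, which also sets $\mf{p}(G/K)=(\res^K_e)^{-1}(\mf{m}(G/e))$, asserts that it is a proper Tambara ideal containing $\mf{m}$, and concludes by maximality. Your closure checks via the double coset formulas at $L=e$ (including the stronger condition on $\nm(a+x)-\nm(a)$) and your properness argument via $\nm^K_e(1)=1$ fill in details that the paper leaves implicit.
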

\begin{proof}
    Let $\mf{p}(G/K) = \res^K_e)^{-1}(\mf{m}(G/e))$. Then $\mf{p}$ is a proper Tambara ideal, and contains $\mf{m}$, thus the two are equal by the maximality of $\mf{m}$.
\end{proof}

\begin{lemma}\label{lemma: bottom of maximal ideal}
    If $\mf{m}\subset T$ is a maximal ideal then $\mf{m}(G/e)\subset T(G/e)$ is a maximal $G$-invariant ideal.
\end{lemma}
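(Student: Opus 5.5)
The plan is to show that $\mf{m}(G/e)$ is a proper $G$-invariant ideal of $T(G/e)$. Then, for any strictly larger proper $G$-invariant ideal $L$, I will build a Tambara ideal of $T$ that sits strictly between $\mf{m}$ and $T$, contradicting maximality of $\mf{m}$. The natural candidate comes from \cref{lemma: maximal ideal looks like this}: pull $L$ back along the restrictions to the bottom level.

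\textbf{$\mf{m}(G/e)$ is proper and $G$-invariant.} Invariance holds because Tambara ideals are closed under conjugation, and by \cref{remark: Weyl group actions} the $G$-action on $T(G/e)$ is given by the conjugations $c_{g,e}$. For properness, suppose $1\in\mf{m}(G/e)$. Then \cref{lemma: maximal ideal looks like this} gives $\mf{m}(G/K)=(\res^K_e)^{-1}(\mf{m}(G/e))$, which contains $1$ because restriction is a ring map. Hence $\mf{m}=T$, contradicting $\mf{m}\neq T$.

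\textbf{Building the ideal.} Let $L$ be a proper $G$-invariant ideal of $T(G/e)$ with $\mf{m}(G/e)\subsetneq L$. Define
\[
    I(G/K)=(\res^K_e)^{-1}(L)\subseteq T(G/K).
\]
Each $I(G/K)$ is a ring ideal, since $\res^K_e$ is a ring map, and $I(G/e)=L$. I then check the closure properties one operation at a time.
\begin{enumerate}
    \item Restriction: closure follows from associativity of restriction.
    \item Conjugation: closure follows from (4) of \cref{prop: compatibility} together with $G$-invariance of $L$.
    \item Transfer: for $x\in I(G/H)$, the additive double coset formula with $L=e$ gives $\res^K_e\tr^K_H(x)=\sum_{\gamma\in K/H}\gamma\cdot\res^H_e(x)$. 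Each summand lies in $L$ by $G$-invariance, so the sum does too.
    \item Norm: the multiplicative double coset formula gives $\res^K_e\nm^K_H(x)=\prod_{\gamma\in K/H}\gamma\cdot\res^H_e(x)$. This is a nonempty product of elements of $L$, so it lies in $L$.
\end{enumerate}
To conclude that $I$ is a Tambara ideal in the sense of the definition (a kernel of a map of Tambara functors), I will invoke Nakaoka's characterization \cite{nakaoka:2012}: a levelwise ideal closed under transfer, norm, restriction and conjugation is the kernel of the quotient map $T\to T/I$.

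\textbf{Contradiction.} The ideal $I$ is proper because $1\notin L=I(G/e)$. By \cref{lemma: maximal ideal looks like this} we have $\mf{m}(G/K)=(\res^K_e)^{-1}(\mf{m}(G/e))\subseteq I(G/K)$ for every $K$. The containment is strict at level $G/e$, since $\mf{m}(G/e)\subsetneq L$. This contradicts the maximality of $\mf{m}$, so $\mf{m}(G/e)$ is a maximal $G$-invariant ideal.

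I expect the main point needing care to be the norm step together with the appeal to Nakaoka's criterion. The norm closure is exactly where $G$-invariance of $L$ is used, since the double coset product ranges over all conjugates of $\res^H_e(x)$. Beyond that, one must confirm that levelwise closure under the four operations really produces a kernel, so that $I$ qualifies as a Tambara ideal under this paper's definition.
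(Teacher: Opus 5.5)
Your proof is correct and follows the same route as the paper: you pull the strictly larger proper $G$-invariant ideal back along the maps $\res^K_e$ to get a proper Tambara ideal containing $\mf{m}$, and then invoke maximality. You also supply details the paper leaves implicit. These are that $\mf{m}(G/e)$ is itself proper and $G$-invariant, and that the pulled-back family is closed under the four operations (via the double coset formulas), which is what justifies the paper's one-line assertion that $\mf{p}$ is a proper Tambara ideal.
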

\begin{proof}
    Given a proper $G$-invariant ideal $I\subset T(G/e)$ which contains $\mf{m}$, let $\mf{p}(G/K) =(\res^K_1)^{-1}(I)$ for each $H\leq G$. Then $\mf{p}$ is a proper ideal of $T$, and
    \[
        \mf{m}(G/K)=(\res^K_1)^{-1}(\mf{m}(G/e))\subseteq (\res^K_1)^{-1}(I) = \mf{p}(G/K)
    \]
    for all $H$.  Maximality of $\mf{m}$ then implies that $\mf{m}=\mf{p}$, and hence $\mf{m}(G/e) = \mf{p}(G/e) = I$.
\end{proof}

\begin{corollary}\label{cor: bij of max ideals}
    Thee assignment $\mf{m}\mapsto \mf{m}(G/e)$ is a bijection between maximal ideals of $T$ and maximal $G$-invariant ideals of $T(G/e)$.
\end{corollary}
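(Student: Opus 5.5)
We need to show that $\mf{m}\mapsto \mf{m}(G/e)$ is a bijection between maximal Tambara ideals of $T$ and maximal $G$-invariant ideals of $T(G/e)$. By \cref{lemma: bottom of maximal ideal}, the map is well defined, since $\mf{m}(G/e)$ is maximal $G$-invariant. Moreover, $\mf{m}(G/e)$ is $G$-invariant because Tambara ideals are closed under conjugation. So it remains to prove injectivity and surjectivity.

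\emph{Injectivity.} This follows directly from \cref{lemma: maximal ideal looks like this}. A maximal ideal $\mf{m}$ is recovered from its bottom level by $\mf{m}(G/K)=(\res^K_e)^{-1}(\mf{m}(G/e))$. Hence two maximal ideals with the same value at $G/e$ agree at every level.

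\emph{Surjectivity.} Let $I\subset T(G/e)$ be a maximal $G$-invariant ideal. Define $\mf{p}(G/K)=(\res^K_e)^{-1}(I)$; this is the same construction used in the proof of \cref{lemma: bottom of maximal ideal}.
\begin{enumerate}
    \item We check that $\mf{p}$ is a Tambara ideal. Each level is a ring ideal, being the preimage of an ideal under a ring map. Closure under restriction and conjugation follows from associativity of restriction, relation (4) of \cref{prop: compatibility}, and $G$-invariance of $I$. Closure under transfer and norm uses the double coset formulas: $\res^K_e\tr^K_H(x)$ is a sum of conjugates of $\res^H_e(x)$, and $\res^K_e\nm^K_H(x)$ is a product of such conjugates. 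For $x\in\mf{p}(G/H)$ these conjugates lie in $I$, so the sum and product lie in $I$ as well (the product because $I$ is an ideal and the product is nonempty).
    \item We check that $\mf{p}$ is proper. Since $\mf{p}(G/e)=I$ is proper, $1\notin \mf{p}(G/e)$, so $\mf{p}\neq T$.
\end{enumerate}

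If one prefers the kernel-of-a-map definition of ideals, the standard equivalence is that ideals are exactly the levelwise ideals closed under all four operations. I would cite Nakaoka \cite{nakaoka:2012} for this and rely on the closure properties checked in step 1.

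\emph{Maximality.} Suppose $\mf{p}\subseteq \mf{q}$ for a proper Tambara ideal $\mf{q}$. Then $\mf{q}(G/e)$ is a $G$-invariant ideal containing $I$. It is proper, since if $1\in\mf{q}(G/e)$ then $\nm^K_e(1)=1$ lies in every level and $\mf{q}=T$. By maximality of $I$, we get $\mf{q}(G/e)=I$. Now take $x\in\mf{q}(G/K)$. Then $\res^K_e(x)\in\mf{q}(G/e)=I$, so $x\in\mf{p}(G/K)$. Hence $\mf{q}=\mf{p}$, so $\mf{p}$ is maximal, and it satisfies $\mf{p}(G/e)=I$.

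The only mildly delicate steps are the closure of $\mf{p}$ under norms in step 1 and the properness check for $\mf{q}$, which uses $\nm(1)=1$. Everything else is formal.
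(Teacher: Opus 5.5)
Your proof is correct and follows the paper's route. The paper simply declares the corollary immediate from \cref{lemma: bottom of maximal ideal} (well-definedness) and \cref{lemma: maximal ideal looks like this} (injectivity), exactly as you use them; your only addition is an explicit surjectivity argument, checking directly that $(\res^K_e)^{-1}(I)$ is a maximal Tambara ideal for a maximal $G$-invariant $I$ (including norm closure and the fact that a proper Tambara ideal has proper bottom level because $\nm^K_e(1)=1$), which the paper leaves implicit in the pullback construction from the proof of \cref{lemma: bottom of maximal ideal}.
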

\begin{proof}
    Immediate from the the last two lemmas.
\end{proof}

\begin{corollary}\label{cor: jac is res inverse of jac}
    For any Tambara functor $T$, the Jacobson radical is given by $J(T)(G/K) = (\res^K_e)^{-1}(J(T(G/e)))$.
\end{corollary}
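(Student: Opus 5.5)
We compute $J(T)(G/K)$ directly from the definition, using the explicit description of maximal ideals established above. By definition,
\[
    J(T)(G/K) = \bigcap_{\mf{m}\subset T \text{ maximal}} \mf{m}(G/K).
\]
By \cref{lemma: maximal ideal looks like this}, each term is $\mf{m}(G/K) = (\res^K_e)^{-1}(\mf{m}(G/e))$. Preimages commute with intersections, so
\[
    J(T)(G/K) = (\res^K_e)^{-1}\Bigl(\bigcap_{\mf{m}\subset T \text{ maximal}} \mf{m}(G/e)\Bigr).
\]

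Next we identify the inner intersection. By \cref{cor: bij of max ideals}, as $\mf{m}$ ranges over the maximal Tambara ideals of $T$, the bottom level $\mf{m}(G/e)$ ranges over exactly the maximal $G$-invariant ideals of the $G$-ring $T(G/e)$; here the $G$-action is the one from \cref{remark: Weyl group actions}. Hence the inner intersection is the intersection of all maximal $G$-invariant ideals of $T(G/e)$. By \cref{cor: jac of G ring}, this equals the ordinary Jacobson radical $J(T(G/e))$, which gives the claimed formula.

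The only point needing care is an edge case: if $T=0$, both sides are the whole (zero) ring, so the formula still holds. Otherwise every step is a formal manipulation of the results just established, and there is no substantive obstacle. The one thing to confirm is that \cref{cor: bij of max ideals} gives a genuine bijection, namely that every maximal $G$-invariant ideal actually arises as some $\mf{m}(G/e)$; without this, the intersection over maximal Tambara ideals could be strictly larger than the intersection over maximal $G$-invariant ideals.

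For surjectivity, take a maximal $G$-invariant ideal $I\subset T(G/e)$ and set $\mf{p}(G/K)=(\res^K_e)^{-1}(I)$. This is a proper Tambara ideal, since $1\notin I$. It lies in some maximal ideal $\mf{m}$, and by \cref{lemma: bottom of maximal ideal} together with the maximality of $I$ we get $\mf{m}(G/e)=I$.
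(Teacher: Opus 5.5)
Your proof is correct and is the same chain of equalities as the paper's. You rewrite each $\mf{m}(G/K)$ as $(\res^K_e)^{-1}(\mf{m}(G/e))$, pull the preimage out of the intersection, and identify $\bigcap_{\mf{m}}\mf{m}(G/e)$ with $J(T(G/e))$ via \cref{cor: bij of max ideals} and \cref{cor: jac of G ring}.

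Your extra check that every maximal $G$-invariant ideal arises as some $\mf{m}(G/e)$ is a worthwhile addition. The paper calls that correspondence a bijection ``immediate'' from two lemmas, but those lemmas only give well-definedness and injectivity. Your Zorn's lemma argument supplies the missing surjectivity; one could also observe directly that $(\res^K_e)^{-1}(I)$ is already a maximal ideal.
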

\begin{proof}
    For any $K\leq G$ we have
    \begin{align*}
        J(T)(G/K) & = \bigcap_{\mf{m}} \mf{m}(G/K)  \\ &  = \bigcap_{\mf{m}} (\res^K_e)^{-1}(\mf{m}(G/e)) \\
        & = (\res^K_e)^{-1}\left(\bigcap_{\mf{m}} \mf{m}(G/e) \right) \\ & = (\res^K_e)^{-1}(J(T(G/e))).
    \end{align*}
    where the intersections are over all maximal ideals $\mf{m}\subset T$. The last equality uses \cref{cor: bij of max ideals,cor: jac of G ring}.
\end{proof}

\begin{example}\label{ex: Jac of Burnside}
    When $G= C_p$, for $p$ a prime, the Burnside Tambara functor is presented by the Lewis diagram
\[\begin{tikzcd}[ampersand replacement=\&]
	{\mathbb{Z}[x]/(x^2-px)} \\
	\\
	{\mathbb{Z}}
	\arrow["{\res^{C_p}_e}"{description}, from=1-1, to=3-1]
	\arrow["{\tr^{C_p}_e}", shift left=3, curve={height=-12pt}, from=3-1, to=1-1]
	\arrow["{\nm^{C_p}_e}"', shift right=2, curve={height=12pt}, from=3-1, to=1-1]
\end{tikzcd}\]
with operations defined by
\[
    \res^{C_p}_e(a+bx) = a+pb,\quad \tr^{C_p}_e(k) = kx,\quad \nm^{C_p}_e(k) = k+\frac{k^p-k}{p}x.
\]
Since the Jacobson radical of $\mathbb{Z}$ is zero, we see  that $J(A_{C_p})$ is the ideal given levelwise by $J(A_{C_p})(C_p/C_p) = (x-p)$, and $J(A_{C_p})(C_p/e) = 0$.  
\end{example}

We now turn to the relationship between the ideals $J(T)(G/H)$ and $J(T(G/H))\subset T(G/H)$. Below, we show that the latter is always contained in the former.  First, we need a lemma.

\begin{lemma}
    Let $T$ be a $G$-Tambara functor with injective restrictions.  For all $H\leq K$ the map $\res^K_H\colon T(G/K)\to T(G/H)$ is an integral extension.  That is, every element in $T(G/H)$ satisfies a monic polynomial equation with coefficients in $\mathrm{im}(\res^K_H)$.
\end{lemma}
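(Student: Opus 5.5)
Given $y\in T(G/H)$, I will produce a monic polynomial with coefficients in $\mathrm{im}(\res^K_H)$ that kills $y$. The polynomial is the standard "characteristic polynomial over orbits" trick, built from norms.

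First I reduce to a case with all conjugates visible. Set $z=\res^H_e(y)\in T(G/e)$; by \Cref{remark: Weyl group actions} the group $G$ acts on $T(G/e)$. I consider the polynomial $f(t)=\prod_{k\in K}(t-k\cdot z)\in T(G/e)[t]$. It is monic and has $z$ as a root, since the factor for $k=e$ vanishes there. Its coefficients are the elementary symmetric functions $e_i(k\cdot z : k\in K)$. The plan is to show each coefficient is $\res^K_e$ of an element of $T(G/K)$, and then transport everything back up to level $H$ using injectivity.

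To realize the coefficients, I use the norm applied to a polynomial variable. Rather than adjoining a variable to a Tambara functor, I exploit that the norm is multiplicative, so that for $a\in T(G/e)$ we get $\res^K_e\nm^K_e(a)=\prod_{k\in K}k\cdot a$ by the multiplicative double coset formula (7) of \cref{prop: compatibility} with $L=e$. The elementary symmetric functions are then handled via the standard Tambara formula for norms of sums: $\nm^K_e(a+b)$ expands as a sum of transfers of norms. Concretely, one evaluates $\nm^K_e(t-z)$ in a polynomial extension, or more simply observes that each $e_i(k\cdot z)$ equals $\res^K_e$ of a suitable sum of terms $\tr^K_L\nm^L_e(\cdots)$ arising from the ``exponential formula'' for $\nm(a+b)$. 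A cleaner route I would try first is to avoid formulas: the coefficients $e_i(k\cdot z)$ lie in $T(G/e)^K$, and I would show $\res^K_e(T(G/K))$ contains these specific symmetric expressions by taking $\nm^K_e$ of $z+m$ for integers $m$, giving $\prod_k(k\cdot z+m)=\sum_i e_i m^{|K|-i}$ in the image. Since these values for varying $m$ lie in the image of the ring map $\res^K_e$, a Vandermonde-style argument recovers the $e_i$, but only after inverting differences of integers. This is the main obstacle. To get an honest integral statement, I would instead use the Tambara reciprocity formula for $\nm^K_e(a+b)$ with $b=-z$, which expresses $\res^K_e\nm^K_e(a-z)$ as a polynomial in $a$ whose coefficients are restrictions of transfer--norm terms. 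The explicit Tambara formula shows these coefficients are exactly the $e_i$ up to sign.

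Once the coefficients $c_i=\res^K_e(\tilde c_i)$ with $\tilde c_i\in T(G/K)$ are in hand, consider $g(t)=t^{|K|}+\sum_i \res^K_H(\tilde c_i)t^{|K|-i}\in \mathrm{im}(\res^K_H)[t]$. Then $\res^H_e(g(y))=f(z)=0$, and injectivity of $\res^H_e$ gives $g(y)=0$. This completes the integrality.
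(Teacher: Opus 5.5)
The gap is exactly the step you call the main obstacle: you never establish that the coefficients $e_i(k\cdot z : k\in K)$ of $f$ lie in $\res^K_e(T(G/K))$, and the whole argument rests on this. The Vandermonde route fails integrally, as you note. The appeal to the Tambara formula for ``$\res^K_e\nm^K_e(a-z)$ as a polynomial in $a$'' is also not an argument as written. For $a\in T(G/e)$ not fixed by $K$ one gets $\prod_{k}(k\cdot a-k\cdot z)$, which is not $\sum_i(-1)^i e_i a^{|K|-i}$. Even for $a=\res^K_e(\alpha)$, you cannot compare coefficients without a genuine indeterminate, i.e.\ a polynomial Tambara extension, which you never construct.

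The claim is true, though, and can be checked directly with the double coset formulas. Take $S\subseteq K$ with $|S|=i$ and stabilizer $L$ under left translation, and write $S=\bigsqcup_j Ls_j$. Then $\prod_{s\in S}s\cdot z=\res^L_e(u_S)$ with $u_S=\prod_j\nm^L_e(s_j\cdot z)$. The additive double coset formula shows that the sum of $\prod_{s\in S'}s\cdot z$ over $S'\in K\cdot S$ equals $\res^K_e\tr^K_L(u_S)$. Summing over orbits puts each $e_i$ in the image. With this inserted, your final paragraph goes through, using only injectivity of $\res^H_e$.

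The paper sidesteps this computation. It only needs the coefficients of $f$ to be $K$-fixed, which is automatic. Every $c\in T(G/e)^K$ is already integral over $\mathrm{im}(\res^K_e)$, since $c^{|K|}=\res^K_e\nm^K_e(c)$ by the multiplicative double coset formula. So $\mathrm{im}(\res^K_e)\subseteq T(G/e)^K\subseteq T(G/e)$ is a tower of integral extensions, the second classically via your $f$. Transitivity of integrality, plus injectivity of $\res^H_e$, then gives the lemma. Your repaired route buys an explicit monic polynomial of degree $|K|$ whose coefficients are sums of transfers of norms; the paper's route buys brevity.
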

\begin{proof}
    Since we have chains of ring inclusions
    \[
        T(G/K)\xrightarrow{\res^K_H} T(G/H)\xrightarrow{\res^H_e} T(G/e)
    \]
    it suffices to prove that $\res^H_e\circ \res^K_H = \res^K_e$ is an integral extension for all $K$.  This map factors as 
    \[
        T(G/K)\xrightarrow{\res^K_e} T(G/e)^K\to T(G/e)
    \]
    where the second map is the inclusion of fixed points  (see \cref{remark: Weyl group actions}).  Since the second map is well known to be an integral extension for any finite group, it suffices to prove that the first map is an integral extension.  If $t\in T(G/e)^K$ is any fixed point then the multiplicative double coset formula (\cref{prop: compatibility} (7)) tells us that 
    \[
        \res^K_e\nm^K_e(t) = \prod_{k\in K} k\cdot t = t^{|K|}
    \]
    and thus $t$ satisfies the monic polynomial $x^{|K|}-\res^K_e\nm^K_e(t)$.
\end{proof}

\begin{proposition}\label{prop: jac inclusion}
    Let $T$ be any Tambara functor with Jacobson radical $J(T)$.  We have $J(T(G/H))\subseteq J(T)(G/H)$.
\end{proposition}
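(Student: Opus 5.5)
The plan is to use the description of the Jacobson radical from \cref{cor: jac is res inverse of jac}. It gives $J(T)(G/H) = (\res^H_e)^{-1}(J(T(G/e)))$, so the statement is equivalent to
\[
    \res^H_e\bigl(J(T(G/H))\bigr)\subseteq J(T(G/e)).
\]
In other words, we must show that if $x$ lies in every maximal ideal of $T(G/H)$, then $\res^H_e(x)$ lies in every maximal ideal of $T(G/e)$. Fix a maximal ideal $\mf{n}\subset T(G/e)$. It suffices to show that its contraction $(\res^H_e)^{-1}(\mf{n})$ is a maximal ideal of $T(G/H)$. Then $x$ lies in this contraction, so $\res^H_e(x)\in\mf{n}$.

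\textbf{Step 1: integrality.} I will show that the ring map $\res^H_e\colon T(G/H)\to T(G/e)$ is integral, meaning $T(G/e)$ is integral over the image $\mathrm{im}(\res^H_e)$. The preceding lemma assumes injective restrictions, but the argument in its proof does not use injectivity. The map factors as $T(G/H)\to T(G/e)^H\subseteq T(G/e)$. The inclusion of fixed points is integral, since $t$ is a root of $\prod_{h\in H}(X-h\cdot t)$, whose coefficients are $H$-fixed. For $t\in T(G/e)^H$, the multiplicative double coset formula gives $\res^H_e\nm^H_e(t)=t^{|H|}$. So $t$ is a root of $X^{|H|}-\res^H_e\nm^H_e(t)$, which has coefficients in the image of $\res^H_e$. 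Transitivity of integrality then shows $T(G/e)$ is integral over $\mathrm{im}(\res^H_e)$.

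\textbf{Step 2: contractions of maximal ideals.} Write $A=T(G/H)$, $B=T(G/e)$, and $A'=\mathrm{im}(\res^H_e)\cong A/\ker(\res^H_e)$. By the standard fact for integral ring extensions $A'\subseteq B$ (lying over and going up, or directly: $B/\mf{n}$ is a field integral over the domain $A'/(\mf{n}\cap A')$, which forces the latter to be a field), $\mf{n}\cap A'$ is a maximal ideal of $A'$. Its preimage in $A$ is exactly $(\res^H_e)^{-1}(\mf{n})$, and this is maximal in $A$ because $A\to A'$ is surjective. This finishes the argument as described in the first paragraph.

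The main obstacle is Step 1, specifically checking that the integrality lemma holds without the injectivity hypothesis. Once we observe that the norm argument only uses the multiplicative double coset formula, the rest is standard commutative algebra.
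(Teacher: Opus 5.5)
Your proof is correct, and it takes a somewhat different route from the paper's.

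\textbf{What is shared.} Both arguments use the same ingredients:
\begin{itemize}
\item integrality of $T(G/e)$ over the image of $\res^H_e$, which comes from $\res^H_e\nm^H_e(t)=t^{|H|}$ on $H$-fixed points;
\item the behaviour of maximal ideals under integral maps;
\item \cref{cor: jac is res inverse of jac}.
\end{itemize}

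\textbf{How the paper organizes it.} The paper first treats Tambara functors with injective restrictions. In that case going up gives the equality $J(T(G/H))=(\res^H_e)^{-1}(J(T(G/e)))=J(T)(G/H)$. It then handles a general $T$ by passing to the quotient by the Tambara ideal $K(G/H)=\ker(\res^H_e)$, which has injective restrictions. Finally it pulls the inclusion back along $q\colon T\to T/K$, using the ring version of \cref{lemma: functoriality}.

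\textbf{How you organize it.} You observe that the integrality argument never uses injectivity for the single map $\res^H_e$. Only the paper's reduction from $\res^K_H$ to $\res^K_e$ needs it. So you can work with $\mathrm{im}(\res^H_e)\cong T(G/H)/\ker(\res^H_e)$ purely at the level of rings. You then need only one fact from commutative algebra: contractions of maximal ideals along an integral map are maximal.

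\textbf{What each buys.} Your version avoids checking that $\ker(\res_e)$ is a Tambara ideal whose quotient has injective restrictions, and it avoids the bookkeeping with $q^{-1}$. The result is a shorter and more transparent proof. The paper's organization gives, in exchange, the full equality $J(T(G/H))=J(T)(G/H)$ in the injective-restriction case, which also uses lying over. That equality is exactly what the paper later invokes in the remark following \cref{cor: jac of mc}.
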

\begin{proof}

    For the sake of readability we will, for this proof only, use the notation $T^K:=T(G/K)$, and similarly for ideals. We will also write $J = J(T)$.  With these notations, we must show that $J(T^H)\subseteq J^H$ for all $H\leq G$.

     We will first give a proof in the case where $T$ has injective restrictions.  In this case, the fact that $T^H\to T^e$ is an integral extension implies, using the going up theorem, that  
    \[
        J(T^H)=(\res^{H}_e)^{-1}(J(T^e)) = J^H.
    \]
    where the second equality is \cref{cor: jac is res inverse of jac}.

    For general $T$, let $K\leq T$ be the ideal given by $K^H = \ker(\res^H_e)$. By construction, $T/K$ is a Tambara functor with injective restriction maps, and thus for all $H$ we have 
    \begin{equation}\label{eq 1}
        J(T^H/K^H) = J((T/K)^H) \subseteq J(T/K)^H
    \end{equation}

    Now, let $q\colon T\to T/K$ be the quotient map.  Since $q$ is surjective, the version of \cref{lemma: functoriality} for commutative rings tells us that
    \begin{equation}\label{eq 2}
         J(T^H)\subseteq q^{-1}(J(T^H/K^H)).
    \end{equation}
        Furthermore, we have 
    \[
      q^{-1}(J(T^H/K^H))\subseteq q^{-1}(J(T/K)^H) \subseteq  J(T)^H+K^H\subseteq J(T)^H
    \]
    where the first inclusion is $q^{-1}$ applied to \eqref{eq 1} and the last is \cref{cor: jac is res inverse of jac}. Combining the last display with \eqref{eq 2} gives $J(T^H)\subseteq J(T)^H$ as desired.
\end{proof}

The next corollary shows that what $T$ is multiplicatively cohomological these two ideals agree.  For this, recall that an element $x$ in a commutative ring $R$ is the Jacobson radical $J(R)$ if and only if $1+xy$ is a unit for all $y\in R$.

\begin{corollary}\label{cor: jac of mc}
    Suppose that $T$ is multiplicatively cohomological.  Then $J(T)(G/H)$ is the Jacobson radical of $T(G/H)$.
\end{corollary}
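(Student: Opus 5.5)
By \cref{prop: jac inclusion} we already have $J(T(G/H))\subseteq J(T)(G/H)$, so only the reverse inclusion $J(T)(G/H)\subseteq J(T(G/H))$ needs proof. I will use the unit criterion recalled just before the statement. Fix $x\in J(T)(G/H)$ and an arbitrary $y\in T(G/H)$; the goal is to show that $1+xy$ is a unit in $T(G/H)$.

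First push everything down to the underlying level. By \cref{cor: jac is res inverse of jac}, $\res^H_e(x)\in J(T(G/e))$. Since $\res^H_e$ is a ring map, $\res^H_e(1+xy)=1+\res^H_e(x)\res^H_e(y)$. Because $\res^H_e(x)$ lies in the Jacobson radical of the ring $T(G/e)$, this element is a unit in $T(G/e)$; write $u=1+xy$ and let $v\in T(G/e)$ be the inverse of $\res^H_e(u)$.

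Next, lift invertibility back to level $G/H$ using the norm, and this is where the multiplicatively cohomological hypothesis enters. The norm $\nm^H_e$ is multiplicative and sends $1$ to $1$, so it sends units to units:
\[
\nm^H_e(\res^H_e(u))\cdot \nm^H_e(v)=\nm^H_e(1)=1 .
\]
On the other hand, multiplicative cohomologicality gives $\nm^H_e\res^H_e(u)=u^{|H|}$. Hence $u^{|H|}$ is a unit in $T(G/H)$, and so $u=1+xy$ is itself a unit, with inverse $u^{|H|-1}\nm^H_e(v)$. Since $y$ was arbitrary, $x\in J(T(G/H))$, which gives the reverse inclusion and hence equality.

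There is no serious obstacle. The only points to check are that $\nm^H_e(1)=1$, which holds because norms are multiplicative, unital maps, and that the relation $\nm^K_H\res^K_H=(-)^{|K/H|}$ is applied with $K=H$ and the subgroup $e$, so that the exponent is $|H/e|=|H|$.
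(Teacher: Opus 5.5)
Your proof is correct and matches the paper's argument essentially step for step. Like the paper, you get $J(T(G/H))\subseteq J(T)(G/H)$ from \cref{prop: jac inclusion}. You then use \cref{cor: jac is res inverse of jac} to see that $\res^H_e(1+xy)$ is a unit, and recover invertibility of $1+xy$ from $(1+xy)^{|H|}=\nm^H_e\res^H_e(1+xy)$. Your explicit inverse $u^{|H|-1}\nm^H_e(v)$ and the check that $\nm^H_e(1)=1$ spell out details the paper leaves implicit.
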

\begin{proof}

    By the previous lemma we have  $J(T(G/H))\subseteq J(T)(G/H)$ and so it suffices to prove the reverse inclusion. Given $x\in J(T)(G/H)$, we have that $\res^H_e(x)$ is in $J(T(G/e))$.  Thus, for any $y\in T(G/H)$ we have $1+\res^H_e(x)\res^H_e(y) = \res^H_e(1+xy)$ is a unit.  But then
    \[
        (1+xy)^{|H|} = \nm^H_e\res^H_e(1+xy)
    \]
    is a unit, because $\nm^H_e$ is multiplicative, so $1+xy$ is a unit for all $y$.  Thus, $x\in J(T(G/H))$.
\end{proof}
\begin{remark}
    We note that \Cref{example: add not mult} gives an example of a Tambara functor $T$ which is not multiplicatively cohomological but for which the conclusion of \cref{cor: jac of mc} holds.  
\end{remark}
\begin{remark}
    The conclusion of \cref{cor: jac of mc} also holds for any Tambara functor with injective restrictions; this follows from the proof of \cref{prop: jac inclusion}.  This is a strictly weaker result, as every Tambara functor with injective restrictions is multiplicatively cohomological.
\end{remark}

We now turn to the proof of Nakayama's lemma. Recall that the statement of Nakayama's lemma is that for any commutative ring $R$, the only $R$-module $M$ with $J(R)M=M$ is $M=0$.  To state Nakayama's lemma for Tambara functors, we need to specify what plays the role of $J(R)M$.  
\begin{definition}
    Let $T$ be a $G$-Tambara functor, $I\subseteq T$ an ideal,  and $M$ a $T$-module.  Then $IM\subseteq M $ is the image of the composite
    \[
        I\boxtimes M\to T\boxtimes M\to M
    \]
    where the first map is induced by the inclusion $I\to T$ and the second is the action map. 
\end{definition}

\begin{example}\label{ex: nak fails in general}
    Let $T = A_{C_p}$ be the Burnside Tambara functor for the group $C_p$, and let $I = J(A_{C_p})$ be the Jacobson radical which is computed in \Cref{ex: Jac of Burnside}.  Let $M$ be the Mackey functor with $M(C_p/C_p) = \mathbb{Z}/(p+1)$ and $M(C_p/e)=0$; the restriction and transfer maps are automatically zero.  The map
    \[
        J(A_{C_p})\boxtimes M\to M
    \]
    is entirely determined by what happens at the $C_p/C_p$ level, where it is the map
    \[
       \mathbb{Z}/(p+1)\cong  \mathbb{Z}\{x-p\} \otimes \mathbb{Z}/(p+1)\to \mathbb{Z}/(p+1)
    \]
    given by multiplication by $x-p$.  If $k\in M(C_p/C_p)=\mathbb{Z}/(p+1)$, we have
    \[
        (x-2)\cdot k = xk-2k = \tr^{C_p}_e\res^{C_p}_e(k)-pk =-pk,
    \]
    where the last step uses that the restriction is zero in $M$.  Since $p$ is invertible mod $p+1$, this map is surjective and we see that $J(A_{C_p})M=M$.
\end{example}

The last example shows that Nakyama's lemma can fail, in general.  The primary obstruction is the Jacobson radical of a Tambara functor is not the levelwise Jacobson radical, hence we cannot rely on Nakayama's lemma for ordinary commutative rings.  However, when $T$ is multiplicatively cohomological we can apply \cref{cor: jac of mc} and prove Nakayama's lemma.

\begin{theorem}[Nakayama's lemma]\label{thm:nakayama}
    Suppose that $T$ is a Tambara functor such that $J(T)(G/H)$ is the Jacobson radical of $T(G/H)$ for all $H$.  If $M$ is any $T$-module such that $J(T)M=M$ then $M=0$.  In particular, this holds when $T$ is multiplciatively cohomological.
\end{theorem}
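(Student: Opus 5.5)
Suppose $M\neq 0$ and choose a subgroup $K\leq G$ that is minimal among those with $M(G/K)\neq 0$. Then $M(G/H)=0$ for every proper subgroup $H<K$. The plan is to show that $J(T)M=M$ forces $J(T(G/K))\cdot M(G/K)=M(G/K)$ as modules over the commutative ring $T(G/K)$, with no transfer terms. That would contradict classical Nakayama, provided $M(G/K)$ is finitely generated. Finite generation is the one point to watch: as stated the theorem puts no finiteness hypothesis on $M$, so I would either read "module" as finitely generated or note that the argument needs it. I would follow the classical convention and assume $M$ finitely generated, meaning each $M(G/K)$ is a finitely generated $T(G/K)$-module.

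\textbf{Step 1: compute the box product at level $G/K$.} Since $M(G/H)=0$ for $H<K$, \cref{cor: box product whe stuff vanishes}, applied with $J(T)$ in the first slot and $M$ in the second, identifies $(J(T)\boxtimes M)(G/K)$ with a quotient of $J(T)(G/K)\otimes M(G/K)$. The map to $M(G/K)$ given by the action is $j\otimes m\mapsto jm$. Hence $(J(T)M)(G/K)$, the image of this map, equals $J(T)(G/K)\cdot M(G/K)$, the ordinary submodule of the $T(G/K)$-module $M(G/K)$. The only care needed is that the image of a map of Mackey functors is computed levelwise, which holds because $\Mack$ is abelian with levelwise kernels and cokernels.

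\textbf{Step 2: apply classical Nakayama.} By hypothesis, $J(T)(G/K)=J(T(G/K))$; in the multiplicatively cohomological case this is \cref{cor: jac of mc}. So $M(G/K)=J(T(G/K))M(G/K)$. Classical Nakayama for the commutative ring $T(G/K)$ then gives $M(G/K)=0$, a contradiction. Therefore $M=0$.

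\textbf{Main obstacle.} The substantive step is Step 1, that is, ensuring transfers from lower levels cannot contribute to $(J(T)M)(G/K)$. Choosing $K$ minimal among the levels where $M$ is nonzero is exactly what removes these contributions. If finite generation of $M$ is not assumed, I would instead use the determinant trick directly at the minimal level. That trick also requires finite generation, so the finiteness hypothesis is essential and the statement should be read with it.
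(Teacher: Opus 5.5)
Your proof is the paper's own argument: choose $K$ minimal with $M(G/K)\neq 0$, use \cref{cor: box product whe stuff vanishes} to see that no transfer terms survive, so $(J(T)M)(G/K)=J(T(G/K))\,M(G/K)$, and then apply classical Nakayama. Your finiteness caveat is correct: the paper invokes classical Nakayama with no finiteness hypothesis, and the statement as written is false already for $G=e$, $T=\mathbb{Z}_{(p)}$, $M=\mathbb{Q}$.

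One refinement: you require every $M(G/K)$ to be finitely generated over $T(G/K)$, which is strictly stronger than the paper's notion of a finitely generated module (a quotient of some $T_X$); for instance $T_{G/e}(G/G)\cong T(G/e)$ need not be finitely generated over $T(G/G)$. It suffices to assume a surjection $T_X\to M$, with $X=\coprod_i G/H_i$ corresponding to elements $m_i\in M(G/H_i)$. At the minimal level all transfer contributions vanish, so $M(G/K)$ is generated over $T(G/K)$ by the finitely many elements $\res^{gH_ig^{-1}}_K c_{g,H_i}(m_i)$ with $K\leq gH_ig^{-1}$. This weaker hypothesis is the form actually needed for \cref{cor: nakayama  v2} and \cref{thm C}.
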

\begin{proof}
    Toward a contradiction, suppose there is an $M$ for which this is not true.  Let $H\leq G$ be any subgroup which is minimal with respect to $M(G/H)\neq 0$.  Then by \cref{cor: box product whe stuff vanishes}, we have that $(J(T)\boxtimes M)(G/H)$ is some quotient of $J(T)(G/H)\otimes M(G/H)$.   Thus, $(J(T)\boxtimes M)(G/H)$  is a quotient of $J(T(G/H))\otimes M(G/H)$.  Since $J(T)M=M$, the map $J(T(G/H))\otimes M(G/H)\to M(G/H)$ is surjective and  ordinary Nakayama's lemma implies that $M(G/H)=0$, which is a contradiction. The final claim follows from \cref{cor: jac of mc}.
\end{proof}

\begin{remark}
    The statement of \cref{thm:nakayama} might reasonably lead one to use the levelwise Jacobson radical as the definition of $J(T)$, and hence try to avoid the need for further assumptions.  Unfortunately, it seems that such a definition is somewhat unhelpful.  First, it is not at all clear that the levelwise Jacobson radicals form a Tambara ideal, as it is not clear they are closed under any of transfer, norm, or restriction.  Second, in applications one is often interested in the case of local Tambara functors where the Jacobson radical is identified with the unique maximal ideal.  This identification is clear when $J(T)$ is defined as the intersection of maximal ideals, and thus one still wants to identify $J(T)$ with the levelwise Jacobson radical.
\end{remark}

We also give the following standard variation which is often convenient.
\begin{corollary}\label{cor: nakayama  v2}
    Suppose that $T$ is multiplicatively cohomological Tambara functor, $J = J(T)$ is the Jacobson radical, and $M$ is a $T$-module. If $N\subseteq M$ is such that $M = N+JM$ then $N=M$.
\end{corollary}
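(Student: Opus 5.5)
The plan is to reduce to \cref{thm:nakayama} by passing to the quotient module $M/N$, exactly as in the classical argument. We interpret $N\subseteq M$ as a $T$-submodule, so that $M/N$ is a $T$-module (the category $\Mod_T$ is abelian), with quotient map $\pi\colon M\to M/N$ a map of $T$-modules.

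\textbf{Step 1: $J(M/N)$ is the image of $JM$.} The key naturality claim is
\[
\pi(JM)=J\cdot(M/N).
\]
By definition $JM$ is the image of $J\boxtimes M\to T\boxtimes M\to M$. Since $\pi$ is a $T$-module map, the square formed by the action maps of $M$ and $M/N$ and by $J\boxtimes \pi$ and $\pi$ commutes. By \cref{rem: tensoring with M is right exact}, $J\boxtimes(-)$ is right exact, so $J\boxtimes\pi\colon J\boxtimes M\to J\boxtimes(M/N)$ is surjective. Hence the image of $J\boxtimes(M/N)\to M/N$ equals $\pi$ applied to the image of $J\boxtimes M\to M$, which is the displayed equality.

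\textbf{Step 2: conclude.} Applying $\pi$ to $M=N+JM$ gives
\[
M/N=\pi(N)+\pi(JM)=0+J(M/N)=J(M/N).
\]
Since $T$ is multiplicatively cohomological, \cref{thm:nakayama} gives $M/N=0$, that is, $N=M$.

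The only nontrivial point is Step 1, namely the surjectivity of $J\boxtimes M\to J\boxtimes (M/N)$. This is handled by the right exactness noted in \cref{rem: tensoring with M is right exact}. All equalities are levelwise equalities of subgroups of Mackey functors, so images and sums of submodules are computed levelwise.
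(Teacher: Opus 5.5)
Your proof is correct and is essentially the paper's argument: both pass to $M/N$, show $J\cdot(M/N)=M/N$, and invoke \cref{thm:nakayama}. The paper packages your Step 1 as a snake-lemma sequence of cokernels $N/JN\to M/JM\to (M/N)/J\cdot(M/N)\to 0$; note also that you only need the inclusion $\pi(JM)\subseteq J\cdot(M/N)$, which follows from naturality of the action alone, so the right exactness invoked in Step 1 is not actually required.
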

\begin{proof}
    It suffices, by Nakayama's lemma, to prove that $J\cdot (M/N) = M/N$.  To that end, we apply the snake lemma to the map of exact sequences
\[\begin{tikzcd}[ampersand replacement=\&]
	\& {J\boxtimes N} \& {J\boxtimes M} \& {J\boxtimes (M/N)} \& 0 \\
	0 \& N \& M \& {M/N} \& 0
	\arrow[from=1-2, to=1-3]
	\arrow[from=1-2, to=2-2]
	\arrow[from=1-3, to=1-4]
	\arrow[from=1-3, to=2-3]
	\arrow[from=1-4, to=1-5]
	\arrow[from=1-4, to=2-4]
	\arrow[from=2-1, to=2-2]
	\arrow[from=2-2, to=2-3]
	\arrow[from=2-3, to=2-4]
	\arrow[from=2-4, to=2-5]
\end{tikzcd}\]
to give a short exact sequence of cokernels
\[
    N/JN\to M/JM\to (M/N)/J\cdot(M/N)\to 0.
\]
It suffices, then, to prove that $N/JN\to M/JM$ is surjective.  But this is clear from the fact that $M = N+JM$. Explicitly, for each finite $G$-set $X$ and $x\in M(X)$ there exists a $y\in N(X)$ and a $z\in (JM)(X)$ such that $x = y+z$.  Then 
\[
    x+JM(X) = y+JM(X)
\]
is the image of the class $y+JN(X)$ under the map $N/JN\to M/JM$. Since a map of Tambara modules is surjective if and only it is levelwise surjective, the proof is complete.
\end{proof}

\section{Applications to local Tambara functors}\label{sec: local}

We end the paper with some applications of Nakayam's lemma for multiplicatively cohnmolocial Tambara functors. Recall that a commutative ring is local is it has exactly one maximal ideal.

\begin{definition}
A \emph{local} Tambara functor is a Tambara functor with precisely one maximal ideal.
\end{definition}

When it is convenient, we will write $(T,\mf{m})$ to denote a local Tambara functor $T$ with maximal ideal $\mf{m}$.  When $(T,\mf{m})$ is local, the quotient $T/\mf{m}$ is a Tambara field, meaning the the zero ideal is the only proper ideal.  We will call this the \emph{residue field} of $T$.

\begin{proposition}\label{prop: local characterization}
    A Tambara functor $T$ is local if and only if $T(G/e)$ has exactly one $G$-orbit of maximal ideals.
\end{proposition}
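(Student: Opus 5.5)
This is a direct consequence of \cref{cor: bij of max ideals} combined with \cref{prop: maximal G ideals}. The plan is to chain two bijections and compare cardinalities.

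\Cref{cor: bij of max ideals} gives a bijection $\mf{m}\mapsto \mf{m}(G/e)$ between maximal Tambara ideals of $T$ and maximal $G$-invariant ideals of the $G$-ring $T(G/e)$. Here $T(G/e)$ carries the Weyl group action of \cref{remark: Weyl group actions}, and $W_G(e)=G$. \Cref{prop: maximal G ideals} gives a bijection $\mf{n}\mapsto\bigcap_{g\in G} g\cdot\mf{n}$ between $G$-orbits of maximal ideals of $T(G/e)$ and maximal $G$-invariant ideals of $T(G/e)$. Composing the two, maximal ideals of $T$ correspond bijectively to $G$-orbits of maximal ideals of $T(G/e)$. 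Hence $T$ has exactly one maximal ideal if and only if $T(G/e)$ has exactly one $G$-orbit of maximal ideals.

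The only points needing care are degenerate cases and the hypotheses of the cited results.

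\emph{Zero Tambara functor.} If $T=0$, then $T(G/e)=0$, since $\res^G_e$ is a unital ring map. Both sides then have zero maximal objects, so the equivalence holds with both conditions false.

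\emph{Existence of maximal ideals.} In the other direction, if $T(G/e)\neq 0$ it has maximal ideals by Zorn's lemma. So the bijection genuinely compares counts, and no separate argument about the existence of maximal Tambara ideals is needed: they correspond to existing orbits.

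\emph{The lemmas behind \cref{cor: bij of max ideals}.} The one mild obstacle is checking that \cref{lemma: maximal ideal looks like this} and \cref{lemma: bottom of maximal ideal} really produce a bijection. The inverse should send a maximal $G$-invariant ideal $I$ to the ideal $K\mapsto(\res^K_e)^{-1}(I)$. For this I would verify three things:
\begin{itemize}
\item $K\mapsto(\res^K_e)^{-1}(I)$ is a proper Tambara ideal. Closure under transfer and norm follows from the double coset formulas together with the $G$-invariance of $I$.
\item This ideal is maximal. Any ideal containing it has bottom level a proper $G$-invariant ideal containing $I$, so that bottom level equals $I$. The ideal is then pinned down by \cref{lemma: maximal ideal looks like this}.
\item The two assignments are mutually inverse.
\end{itemize}
Since \cref{cor: bij of max ideals} is already stated, I would take it as given and simply invoke it.
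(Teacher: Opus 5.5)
Your proposal is correct and is the paper's argument. The paper's proof is the one line that composing the bijection of \cref{cor: bij of max ideals} with that of \cref{prop: maximal G ideals} identifies maximal Tambara ideals of $T$ with $G$-orbits of maximal ideals of $T(G/e)$; your extra checks (the zero case, and reconstructing the inverse $I\mapsto(\res^K_e)^{-1}(I)$) are sound but unnecessary, since the composed bijection already covers the degenerate case.
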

\begin{proof}
    This is immediate from \cref{cor: bij of max ideals,prop: maximal G ideals}.
\end{proof}

Recall that a commutative ring $R$ is \emph{semi-local} if $R/J(R)$ is a semi-simple ring.  That is, $R/J(R)$ is isomorphic to a product of fields. This is equivalent to $R$ having only finitely many maximal ideals. The following generalizes \cite[Proposition 2.4]{Wisdom:classification}.

\begin{corollary}
    If $T$ is a local Tambara functor then $T(G/e)$ is a semi-local ring.
\end{corollary}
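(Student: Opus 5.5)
The plan is to combine \cref{prop: local characterization} with the fact that a single $G$-orbit of maximal ideals in $T(G/e)$ is finite, and then use the characterization of semi-local rings recalled just above the statement.

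First, since $T$ is local, \cref{prop: local characterization} says that the maximal ideals of $T(G/e)$ form exactly one $G$-orbit. Here $G$ acts on $T(G/e)$ by the ring automorphisms of \cref{remark: Weyl group actions}, and therefore acts on its set of maximal ideals. Fix a maximal ideal $\mf{n}\subset T(G/e)$; one exists because $T(G/e)\neq 0$, as will be checked below. Then every maximal ideal of $T(G/e)$ has the form $g\cdot\mf{n}$ for some $g\in G$. Because $G$ is finite, there are at most $|G|$ maximal ideals. A commutative ring with only finitely many maximal ideals is semi-local by the recalled equivalence. Concretely, the Chinese remainder theorem identifies $T(G/e)/J(T(G/e))$ with the finite product $\prod_{\mf{n}'} T(G/e)/\mf{n}'$ over the distinct maximal ideals $\mf{n}'$, and this is a product of fields.

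The only point needing care is that $T(G/e)$ is nonzero, so that the orbit is genuinely nonempty and the ring is not the zero ring. This follows from $T$ being local. If $T(G/e)=0$, then $\res^K_e(1)=1=0$ forces $1\in\ker(\res^K_e)$. Then $x=x\cdot 1$ lies in that kernel for every $x$, so for all $K$ we get $(\res^K_e)^{-1}(0)=T(G/K)$. In that case \cref{cor: bij of max ideals} would give no maximal ideals of $T$, contradicting locality.

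I expect no real obstacle: the substance is already contained in \cref{prop: local characterization}, and what remains is the finiteness of $G$-orbits together with the Chinese remainder theorem.
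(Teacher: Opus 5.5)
Your proposal is correct and follows the paper's own proof: \cref{prop: local characterization} gives a single $G$-orbit of maximal ideals in $T(G/e)$, so there are at most $|G|$ of them, and the ring is therefore semi-local. Your separate check that $T(G/e)\neq 0$ is harmless but redundant, since ``exactly one orbit'' already guarantees that a maximal ideal exists.
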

\begin{proof}
   By \cref{prop: local characterization}, $T(G/e)$ has at most $|G|$ maximal ideals.
\end{proof}
\begin{remark}
    The converse to this corollary does not hold.  Indeed, $T(G/e)$ will be semi-local whenever it has finitely many orbits of maximal ideals.
\end{remark}

\begin{example}
    We give an example of a local Tambara functor $T$ where $T(G/e)$ is semi-local but not local.  Let $G = C_2/e$, and consider the Tambara functor $T$ indicated in the Lewis diagram
\[\begin{tikzcd}[ampersand replacement=\&]
	{\mathbb{F}_2} \\
	{\mathbb{F}_2\oplus \mathbb{F}_2}.
	\arrow["\Delta"{description}, from=1-1, to=2-1]
	\arrow["{+}", shift left=3, from=2-1, to=1-1]
	\arrow["\times"', shift right=3, from=2-1, to=1-1]
\end{tikzcd}\]
This Tambara functor has exactly one proper ideal, hence is local, but $T(G/e)$ is not a local ring as it has two maximal ideals given by $0\times \bbF_2$ and $\bbF_2\times 0$.  In this case the maximal Tambara ideal is the zero ideal.  For an example where $\mf{m}\neq 0$, one can write down the same Tambara functor with the $2$-adic integers $\bbZ_{2}$ in place of $\bbF_2$ and obtain a local Tambara functor $S$ with residue field $T$.
\end{example}

An important classical result about local rings is that a finitely generated projective module over a local ring is always free.  In the remainder of this section we will prove an analog of this theorem for local Tambara functors which are multiplicatively cohmological.  The reason for the added condition is that we need Nakayama's lemma, and for this we need to apply \cref{thm:nakayama}.

Before giving the proof, we must first clarify what we mean by a free module.  For commutative rings, free modules are classified by their dimension, which is a positive integer.  For Tambara functors, the notion of freeness we use is indexed on finite $G$-sets.

In the remainder of this section, we will need to use the fact that Tambara and Mackey functors can be extended to take all finite $G$-sets as input.  This is done by requiring that a Tambara functor $T$ come equipped with sufficiently natural isomorphisms
\[
    T(X\amalg Y)\cong T(X)\times T(Y),
\]
where $X$ and $Y$ are any two finite $G$-sets. The key point is that every finite $G$-set is the union of its orbits, and each orbit is isomorphic to one of the $G/H$; thus $T(X)$ is determined, up to isomorphism, by the rings $T(G/H)$. We refer the reader to \cite{tambara:1993} for the rigorous construction of Tambara functors indexed on finite $G$-sets.  

For a Tambara functor $T$, let $\Mod_{T}$ denote the category of left $R$-modules.  Base change along the unit map $A_G\to T$ produces an adjunction
\[\begin{tikzcd}[ampersand replacement=\&]
	{\Mack^G} \&\& {\Mod_T.}
	\arrow["{T\boxtimes(-)}", shift left=2, from=1-1, to=1-3]
	\arrow["U", shift left=2, from=1-3, to=1-1]
\end{tikzcd}\]

Now, for each finite $G$-set $X$ there is a Mackey functor $A_X$ which represents evaluation at the $G$-set $X$.  That is, there is a natural isomorphism of abelian groups $\Mack^G(A_X,M)\cong M(X)$ for each finite $G$-set $X$.

\begin{definition}\label{def: free modules}
    A $T$-module is \emph{finite free} if it is isomorphic to one of the form $T_X:=T\boxtimes A_X$ where $X$ is a finite $G$-set.
\end{definition}
\begin{remark}
    One can extend the definition to non-finite $G$-sets as follows.  Given any $G$-set $Y$, write $Y$ as the union of an increasing chain $X_0\subset X_1\subset\dots$ where each $X_i$ is finite.  Then define $A_Y = \mathrm{colim}_{i}\ A_{X_i}$.  One checks that this colimit does not depend on the choice of $X_i$'s.  In this paper we will only be concerned with finite free modules.
\end{remark}
The name is justified by the following proposition.
\begin{proposition}\label{prop: representability}
    For any $T$-module $M$ there is a natural bijection of abelian groups
    \[
        \Mod_T(T_X,M)\cong M(X)
    \]
    for any finite $G$-set $X$.
\end{proposition}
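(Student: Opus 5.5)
The bijection comes from composing two adjunctions. The base change adjunction $T\boxtimes(-)\dashv U$ between $\Mack^G$ and $\Mod_T$ gives a natural isomorphism
\[
    \Mod_T(T\boxtimes A_X, M)\cong \Mack^G(A_X, UM).
\]
The Yoneda-type representability of $A_X$ gives a natural isomorphism $\Mack^G(A_X,UM)\cong (UM)(X) = M(X)$. Composing these yields the natural bijection $\Mod_T(T_X,M)\cong M(X)$. It is natural in $M$ because both steps are.

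The bijection is additive because every map involved is a homomorphism of abelian groups. The hom-sets in $\Mod_T$ and $\Mack^G$ are abelian groups under pointwise addition, and the adjunction isomorphism is additive since both functors in it are additive. Explicitly, the composite sends $f\colon T_X\to M$ to $f_X(1\otimes \mathrm{id}_X)$. Here $1\otimes\mathrm{id}_X$ is the image of the unit of $T$ and the tautological element $\mathrm{id}_X\in A_X(X)$ under $T(X)\otimes A_X(X)\to (T\boxtimes A_X)(X)$. This evaluation is visibly additive in $f$.

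The one point needing care is that $T\boxtimes(-)$ really is left adjoint to the forgetful functor $U$, i.e.\ that $T\boxtimes N$ is the free $T$-module on a Mackey functor $N$. This is the standard free-module construction for a commutative monoid in a symmetric monoidal category. The module structure on $T\boxtimes N$ is $T\boxtimes T\boxtimes N\to T\boxtimes N$, induced by multiplication on $T$. The unit of the adjunction is $N\cong A_G\boxtimes N\to T\boxtimes N$, using the unit map $A_G\to T$. The counit is the action $T\boxtimes M\to M$. The triangle identities follow from the unit and associativity axioms for the $T$-action. Nothing about Tambara structure, as opposed to Green structure, is needed here.
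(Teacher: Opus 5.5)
Your proof is correct and is the same argument as the paper's, which composes the free--forgetful adjunction $\Mod_T(T\boxtimes A_X,M)\cong\Mack^G(A_X,U(M))$ with the representability isomorphism $\Mack^G(A_X,U(M))\cong M(X)$. Your additional checks (additivity, the explicit formula $f\mapsto f_X(1\otimes\mathrm{id}_X)$, and the verification of the adjunction) are fine details that the paper leaves implicit. One small correction: the triangle identity for $T\boxtimes N\to T\boxtimes T\boxtimes N\to T\boxtimes N$ uses the unit axiom of the monoid $T$, not of the action; associativity of the action is what shows that the counit and the adjunct maps are $T$-linear.
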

\begin{proof}
    We have
    \[
    \Mod_T(T_X,M)\cong \Mack^G(A_X,U(M))\cong U(M)(X)=M(X)
    \]
    where the first isomorphism is the free-forgetful adjunction between $\Mack^G$ and $\Mod_T$.
\end{proof}

We say that a $T$-module $M$ is \emph{finitely generated} if it admits a surjection from $T_X$ for some finite $G$-set $X$.
\begin{lemma}
    A $T$-module is finitely generated and projetive if and only if it is a retract of $T_X$ for some finite $G$-set $X$.
\end{lemma}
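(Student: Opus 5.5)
We prove the two implications separately, using only that $\Mod_T$ is an abelian category and that $T_X$ corepresents evaluation at $X$ (\cref{prop: representability}).

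\emph{Retracts are finitely generated projective.} Suppose $P$ is a retract of $T_X$, with $i\colon P\to T_X$ and $r\colon T_X\to P$ satisfying $r\circ i=\mathrm{id}_P$.
\begin{itemize}
    \item Finite generation: $r$ is a split epimorphism, hence surjective, so $P$ is finitely generated by definition.
    \item Projectivity of $T_X$: by \cref{prop: representability}, $\Mod_T(T_X,-)\cong(-)(X)$. Evaluation at $X$ is exact, since $M(X)\cong\prod M(G/H_j)$ over the orbits of $X$, and a map of $T$-modules is surjective if and only if it is levelwise surjective (as used in \cref{cor: nakayama  v2}). So $T_X$ is projective.
    \item Projectivity of $P$: given a surjection $M\to N$ and a map $f\colon P\to N$, lift $f\circ r\colon T_X\to N$ to some $g\colon T_X\to M$. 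Then $g\circ i$ lifts $f$, because $f\circ r\circ i=f$.
\end{itemize}

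\emph{Finitely generated projectives are retracts.} Suppose $P$ is finitely generated and projective. Choose a surjection $\pi\colon T_X\to P$ for some finite $G$-set $X$. Applying projectivity of $P$ to $\pi$ and $\mathrm{id}_P$ gives a section $s\colon P\to T_X$ with $\pi\circ s=\mathrm{id}_P$. Hence $P$ is a retract of $T_X$.

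The only step needing care is the exactness of $M\mapsto M(X)$ on $\Mod_T$. Surjections, and more generally cokernels, in $\Mod_T$ are computed levelwise on underlying Mackey functors, because the forgetful functor to Mackey functors has both adjoints. Evaluation at a finite $G$-set is a finite product of evaluations at orbits, and finite products are exact.
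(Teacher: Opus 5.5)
Your proof is correct and follows the same route as the paper: representability makes $T_X$ projective, so its retracts are projective, and conversely a surjection $T_X\to P$ onto a projective $P$ splits. You also spell out two points the paper leaves implicit: a retract of $T_X$ is finitely generated because the retraction is surjective, and evaluation at $X$ is exact on $\Mod_T$.
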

\begin{proof}
    Representability implies that each $T_X$, and hence all retracts of a $T_X$, are projective.  Conversely, if $P$ is a finitely generated projective module then by definition it admits a surjection $T_X\to P$ since it is finitely generated and this surjection splits because $P$ is projective.
\end{proof}

\begin{lemma}
    Let $I$ be any ideal of a Tambara functor, and let $M$ be a $T$-module.  For any finite $G$-set $X$ there is an isomorphism of abelian groups
    \[
        ((T/I)\boxtimes_{T} M)(X)\cong M(X)/(IM)(X).
    \]
\end{lemma}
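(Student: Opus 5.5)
The plan is to realize both sides as cokernels of the same map and use right exactness of the relative box product, as in \cref{rem: relative tensoring with M is right exact}. Start from the short exact sequence of $T$-modules $0\to I\to T\to T/I\to 0$. Applying $(-)\boxtimes_T M$ gives an exact sequence
\[
    I\boxtimes_T M\to T\boxtimes_T M\to (T/I)\boxtimes_T M\to 0.
\]
Since $T$ is the unit for $\boxtimes_T$, we have $T\boxtimes_T M\cong M$, and under this identification the first map becomes the action map $I\boxtimes_T M\to M$. The isomorphism $T\boxtimes_T M\cong M$ is the standard coequalizer argument: the action $T\boxtimes M\to M$ is a split coequalizer of the two maps from $T\boxtimes T\boxtimes M$, with splitting given by the unit $A_G\to T$.

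The key step is to show that the image of $I\boxtimes_T M\to M$ equals $IM$, which was defined as the image of $I\boxtimes M\to T\boxtimes M\to M$. The canonical quotient $I\boxtimes M\to I\boxtimes_T M$ is surjective, being a coequalizer. The composite $I\boxtimes M\to I\boxtimes_T M\to M$ is the map $I\boxtimes M\to T\boxtimes M\to M$, because the identification $T\boxtimes_T M\cong M$ is induced by the action. Hence the two images coincide, and we get an isomorphism of $T$-modules $(T/I)\boxtimes_T M\cong M/IM$.

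Finally, evaluate at $X$. Cokernels in the abelian category of Mackey functors, and hence of $T$-modules, are computed levelwise, since Mackey functors are additive presheaves on $\mathcal{B}$. Therefore $(M/IM)(X)\cong M(X)/(IM)(X)$. Images are likewise computed levelwise, so $(IM)(X)$ is exactly the levelwise image.

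The main obstacle is the bookkeeping in the key step. One must check that the right $T$-module structure on $I$ is the one restricted from $T$, so that $I\boxtimes_T M\to T\boxtimes_T M$ is well defined and compatible with the action. One must also check that the exact sequence above is one of $T$-modules, and not merely of Mackey functors. Both follow because $I$ is an ideal, so $I\to T$ is a map of $T$-modules and $T\to T/I$ is a map of Tambara functors.
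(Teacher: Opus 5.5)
Your proposal is correct and follows the same route as the paper: tensor $I\to T\to T/I\to 0$ with $M$ over $T$ and use right exactness of $(-)\boxtimes_T M$. You also spell out three details the paper leaves implicit: the identification $T\boxtimes_T M\cong M$, the fact that the image of $I\boxtimes_T M\to M$ is exactly $IM$, and the levelwise computation of cokernels and images.
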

\begin{proof}
    This follows by tensoring the exact sequence
    \[
        I\to T\to T/I\to 0
    \]
    with $M$ and using the fact that $(-)\boxtimes_T M$ is right exact, by \cref{rem: relative tensoring with M is right exact}.
\end{proof}

\begin{theorem}
    Let $(T,\mf{m})$ be a local, multiplicatively cohomological Tambara functor. If $P$ is a finitely generated projective $T$-modules such that $P/\mf{m}P$ is a free $T/\mf{m}$-module then $P$ is a free $T$-module.
\end{theorem}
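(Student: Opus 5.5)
We follow the classical argument: lift a basis of the reduction to a map from a free module, show it is surjective by Nakayama, then show the kernel vanishes using projectivity and Nakayama again. Write $k = T/\mf{m}$ and $\ol{M} = k\boxtimes_T M \cong M/\mf{m}M$ for a $T$-module $M$. By hypothesis there is a finite $G$-set $X$ and an isomorphism of $k$-modules $\phi\colon k_X \to \ol{P}$, where $k_X = k\boxtimes A_X \cong k\boxtimes_T T_X$. By \cref{prop: representability} applied over $k$, $\phi$ corresponds to an element $\ol{e}\in \ol{P}(X) = P(X)/(\mf{m}P)(X)$. We choose a lift $e\in P(X)$, which exists since $P\to P/\mf{m}P$ is levelwise surjective. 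Applying \cref{prop: representability} over $T$ then gives a map $f\colon T_X\to P$. By naturality of the representability isomorphism and the identification $k\boxtimes_T T_X\cong k_X$ (associativity of base change), the reduction $\ol{f} = k\boxtimes_T f$ agrees with $\phi$, so it is an isomorphism.

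\textbf{Surjectivity.} Let $N = \mathrm{im}(f)\subseteq P$. Since $\ol{f}$ is surjective, every element of $P(Y)$ is congruent modulo $(\mf{m}P)(Y)$ to an element of $N(Y)$, so $P = N + \mf{m}P$. Because $T$ is local, $J(T)=\mf{m}$, and $T$ is multiplicatively cohomological, so \cref{cor: nakayama  v2} gives $N = P$. Thus $f$ is surjective.

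\textbf{Injectivity.} Let $K = \ker f$, giving a short exact sequence $0\to K\to T_X\to P\to 0$ of $T$-modules. Since $P$ is projective, the sequence splits, so $T_X\cong K\oplus P$ compatibly with $f$. Applying the additive functor $k\boxtimes_T(-)$ preserves the splitting, giving $k_X \cong \ol{K}\oplus \ol{P}$, where the projection to $\ol{P}$ is $\ol{f}$. Since $\ol{f}$ is an isomorphism, $\ol{K}=0$, that is, $K = \mf{m}K$. As $K$ is a $T$-module, \cref{thm:nakayama} gives $K=0$. Hence $f$ is an isomorphism and $P\cong T_X$ is free.

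\textbf{Main obstacle.} The delicate step is the first one: checking that the lift $f$ reduces exactly to $\phi$. This requires the naturality of \cref{prop: representability} with respect to base change along $T\to k$, together with the identification $k\boxtimes_T T_X \cong k\boxtimes A_X$, which follows from associativity of $\boxtimes$. Both the surjectivity and injectivity steps depend on this compatibility. Once it is in place, they are direct applications of Nakayama's lemma; note that no finite generation hypothesis on $K$ is needed, since \cref{thm:nakayama} holds for arbitrary modules.
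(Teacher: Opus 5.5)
Your argument is essentially the paper's proof: lift the basis of $P/\mf{m}P$ to a map $T_X\to P$, get surjectivity from \cref{cor: nakayama  v2}, then split off the complement and show that its reduction mod $\mf{m}$ vanishes. The paper finishes by rerunning the surjectivity step for $Q$ with $X=\emptyset$, which amounts to your direct appeal to Nakayama.

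One correction: your closing remark that no finite generation hypothesis is needed is false. Nakayama already fails for $G=e$, $T=\mathbb{Z}_{(p)}$, $M=\mathbb{Q}$. The proof of \cref{thm:nakayama} applies ordinary Nakayama to $M(G/H)$ at a minimal nonvanishing level, and that tacitly needs $M(G/H)$ to be finitely generated over $T(G/H)$. This does not damage your proof, because $K$ is a direct summand of $T_X$ and hence finitely generated. For finitely generated modules the minimal-level argument does go through, so you should justify the step this way rather than disclaiming finite generation.
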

\begin{proof}
    We mimic the ordinary proof for local rings, avoiding the use of elements where possible. We write $F = T/\mf{m}$ for the residue field.  By assumption, $F\boxtimes_{T} P\cong P/\mf{m}P$ is free, hence there is a finite $G$-set $X$ and an isomorphism $h\colon F_X = F\boxtimes A_X\cong P/\mf{m}P$.  Such a map corresponds to an element $x\in (F\boxtimes_T P)(X)\cong (P/\mf{m}P)(X)$.  Let $\overline{x}\in P(X)$ be a lift along the canonical quotient $P(X)\to (P/\mf{m}P)(X)$.  Using \cref{prop: representability} again, this element $\overline{x}$ corresponds to a map $\overline{h}\colon T_X\to P$, and this map satisfies the property that the composite
    \[
        F_X\cong F\boxtimes_T T_X\xrightarrow{F\boxtimes_T \overline{h}} F\boxtimes_T P = P/\mf{m}P
    \]
    is equivalent to $h$.  In particular, $F\boxtimes \overline{h}$ is an isomorphism. Equivalently, for any finite $G$-set $Y$ we have
    \[
        P(Y)\cong \mathrm{im}(\overline{h}\colon T_X(Y)\to P(Y)) + \mf{m}P(Y)
    \]
    which implies, by \cref{cor: nakayama  v2}, that $\overline{h}$ is surjective. It follows that $T_X\cong P\oplus Q$, where $Q$ is some other projective $T$-module and the projection onto $P$ is $\overline{h}$. We will show that $Q=0$, which completes the proof.
    
    Since functors preserve retract diagrams, we see that there is an exact sequence
    \[
        0\to F\boxtimes_T Q\to F\boxtimes_T T_X\xrightarrow{F\boxtimes_T \overline{h}} F\boxtimes_T P\to 0
    \]
    but we know that $F\boxtimes_T \overline{h}$ is an isomorphism and hence $F\boxtimes_T Q=0$.  If one repeats the argument  with $P$ replaced by $Q$, we see there is a surjection from $0= T_{\emptyset} \to Q$, which implies that $Q=0$.
\end{proof}

Of course, the utility of this result relies heavily on the existence of residue fields with the property that every projective module is free.  Luckily, there are some known results here.

\begin{theorem}[{\cite[Theorem E]{ChanWisdom}}]\label{thm: Chan Wisdom}
    Let $F$ be a $C_{p^n}$-Tambara functor such that:
    \begin{itemize}
        \item for all $H\leq G$, $F(G/H)$ is a field,
        \item the restriction maps $\res^K_H\colon F(G/K)\to F(G/H)$ are finite extensions.
    \end{itemize}
    Then a finitely generated $F$-module is projective if and only if it is free.
\end{theorem}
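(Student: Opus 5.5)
The implication ``free $\Rightarrow$ projective'' is immediate: by \cref{prop: representability} each $F_X$ represents evaluation at $X$, and evaluation is exact, so $F_X$ is projective. All the work is in the converse. I would prove it with a Krull--Schmidt argument in the abelian category $\Mod_F$. The two hypotheses are exactly what is needed to make every finitely generated module finite dimensional over the field $k:=F(G/G)$.

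\textbf{Step 1: finiteness.} Because each $F(G/H)$ is a field, every restriction map is a field homomorphism and hence injective. Because the restrictions are finite extensions, each $F(G/H)$ is a finite dimensional $k$-vector space. Using \cref{thm: box product formula}, each value $F_{G/H}(G/K)=(F\boxtimes A_{G/H})(G/K)$ is a quotient of a finite sum of $F(G/L)$'s. So it is finite dimensional over $k$, and therefore so is any finitely generated $F$-module. In particular $\mathrm{End}(F_{G/H})\cong F_{G/H}(G/H)$, by \cref{prop: representability}, is a finite dimensional $k$-algebra. Finite length then gives a Krull--Schmidt theorem for finitely generated $F$-modules: decompositions into indecomposables exist and are unique up to isomorphism and order.

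\textbf{Step 2: the orbits give indecomposable projectives.} Since $F_X=\bigoplus_{\text{orbits}}F_{G/H}$, Krull--Schmidt reduces the problem to showing that each $F_{G/H}$ is indecomposable. Equivalently, I must show that the finite dimensional algebra $E_H:=F_{G/H}(G/H)$ is local. Granting this, every finitely generated projective $P$ is a summand of some $F_X$. By uniqueness of decomposition, $P$ is then a sum of copies of the $F_{G/H}$, that is, $P\cong F_Y$ for a finite $G$-set $Y$, which is what we want.

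\textbf{Step 3: locality of $E_H$ (the main obstacle).} To describe $E_H$ explicitly, I would use the fact that $F_{G/H}$ is induced from the restriction of $F$ to $H$ and apply the double coset description. For $G=C_{p^n}$ abelian, this identifies $E_H$ with a twisted group-type ring. Its underlying space is a sum over $K\le H$ of the images of transfers $F(G/K)\cdot[G/K\text{-type elements}]$, together with the Weyl group $G/H$ acting through conjugations. I would then argue by induction along the chain $e<C_p<\dots<C_{p^n}$. The span of the terms coming from proper subgroups $K<H$ should form a two-sided ideal. The key claim is that this ideal is nilpotent, or at least contained in the radical: a product of two transfer terms is again a transfer, and the double coset formula produces multiplicities that are powers of $p$. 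The quotient by this ideal should be a twisted group ring $F(G/H)\ast(G/H)$.

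The crux is that the $p$-group $G/H$ acts on the field $F(G/H)$. If the action is faithful, Galois descent makes the twisted group ring a matrix algebra, which would break locality. So this step needs care: either the faithful case forces the correct conclusion through transfers, or one instead shows that the genuinely indecomposable projectives are still free modules indexed on orbits. What I would try first is to show directly that the idempotents of $E_H$ are trivial, using injectivity of restriction to detect them at the level of $\mathrm{End}(F_{G/e})$. If that fails, I would fall back on an induction on $n$ that restricts to $C_{p^{n-1}}$ and compares with the top level via Nakayama-type arguments as in \cref{cor: nakayama  v2}. Either way, this step is where the specific structure of cyclic $p$-groups must be used.
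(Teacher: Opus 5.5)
The paper does not prove this statement; it quotes it from \cite{ChanWisdom}, so there is no in-paper argument to compare with. Judged on its own, your proposal has a genuine gap, and it sits exactly where you suspected.

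\textbf{What works.} The easy direction is fine. So is the finiteness in Step 1, which is immediate from $F_X(Y)\cong F(X\times Y)$. Krull--Schmidt is available.

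\textbf{Where it fails.} Step 2 reduces everything to locality of $E_H=\mathrm{End}(F_{G/H})$, and that claim is false in general. Since $G$ is abelian, $G/H\times G/H$ is a disjoint union of $|G/H|$ copies of $G/H$. So $E_H\cong F(G/H\times G/H)$ is $|G/H|$ copies of the field $F(G/H)$, and $E_H$ is itself the twisted group algebra $F(G/H)\ast(G/H)$. It is not an extension of that algebra by a ``transfer ideal'': the Burnside contributions of proper subgroups $K<H$ are absorbed as the scalars $\tr^H_K(1)\in F(G/H)$, so there is nothing nilpotent to quotient by.

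Such an algebra is local only when $H=G$, or when the characteristic is $p$ and $G/H$ acts trivially on $F(G/H)$. For example, take the fixed-point functor of a $C_p$-Galois extension $L/k$. Then $E_e\cong L\ast C_p\cong M_p(k)$. In fact $F_{G/e}\cong F^{\oplus p}$: you can split $F$ off using an $a$ with $\tr^{C_p}_e(a)=1$. So the theorem holds there even though your reduction fails. The correct target is that every indecomposable summand of each $F_{G/H}$ is isomorphic to some $F_{G/K}$. Proving that needs real information about how the Weyl actions and field degrees interact along restrictions, and the proposal supplies none.

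\textbf{The stated hypotheses are not enough.} No completion of Step 3, and none of your fallbacks, can work from the two listed hypotheses alone, because with only those hypotheses the statement is false. Take $G=C_2$ and $F=\ul{\mathbb{Q}}$: every level is $\mathbb{Q}$ and every restriction is the identity.
\begin{itemize}
\item $F_{G/e}$ is $\mathbb{Q}$ at $G/G$ and $\mathbb{Q}^2$ at $G/e$, with $C_2$ swapping coordinates, restriction $a\mapsto(a,a)$ and transfer $(a,b)\mapsto a+b$.
\item It splits as $F\oplus P_-$, where $P_-$ is $0$ at $G/G$ and the line $\mathbb{Q}(1,-1)$ at $G/e$. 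Equivalently, $E_e\cong\mathbb{Q}[C_2]\cong\mathbb{Q}\times\mathbb{Q}$.
\item So $P_-$ is finitely generated and projective.
\item But $F_X$ for $X=a\,G/G\sqcup b\,G/e$ has dimensions $(a+b,\,a+2b)$ at $(G/G,G/e)$, while $P_-$ has $(0,1)$. So $P_-$ is not free.
\end{itemize}
This also contradicts the paper's remark that the constant $C_{p^n}$-Tambara functor at any field qualifies.

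A correct argument must therefore use a hypothesis not stated here. A characteristic-$p$ assumption would exclude this example and still cover the paper's application to the residue field $\ul{\mathbb{F}_p}$. It is also exactly what your heuristic about ``multiplicities that are powers of $p$'' tacitly needs.
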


There are many examples.  For instance, the fixed point Tambara functor of any $C_{p^n}$-action on a field $L$.  In particular, if $L$ is any field, with trivial $C_{p^n}$-action, then the constant $C_{p^n}$-Tambara functor at $L$ has this property.

\begin{corollary}
    Let $T$ be any local Tambara functor with residue field satisfying the conditions of \cref{thm: Chan Wisdom}.  Then a finitely generated $T$-module is projective if and only it is free.
\end{corollary}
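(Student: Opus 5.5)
The easy direction is immediate: a finite free module $T_X$ is projective, and it is finitely generated via the identity $T_X\to T_X$. Projectivity follows from \cref{prop: representability}, since $\Mod_T(T_X,-)\cong(-)(X)$ is evaluation at $X$, which is exact because surjections of modules are levelwise surjections. So the content is the converse. I will deduce it from the preceding theorem, which requires $T$ to be multiplicatively cohomological. As in the rest of this section, I read that as a standing hypothesis on $T$: it is needed to invoke \cref{cor: nakayama  v2}, and I do not expect it to follow from the conditions on the residue field alone. With that in place, it suffices to show that for a finitely generated projective $T$-module $P$, the reduction $P/\mf{m}P$ is a free $F$-module, where $F=T/\mf{m}$.

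First, by the lemma identifying $((T/I)\boxtimes_T M)(X)$ with $M(X)/(IM)(X)$, we have $P/\mf{m}P\cong F\boxtimes_T P$. Next, by the characterization of finitely generated projectives, $P$ is a retract of some $T_X$. Base change $F\boxtimes_T(-)$ is a functor, so it preserves retract diagrams, and $F\boxtimes_T P$ is therefore a retract of $F\boxtimes_T T_X$.

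The only computation is the identification
\[
F\boxtimes_T T_X = F\boxtimes_T(T\boxtimes A_X)\cong F\boxtimes A_X = F_X.
\]
This comes from associativity and unitality of the relative box product, which is the usual extension-of-scalars isomorphism. This step was already used implicitly in the proof of the preceding theorem. It follows that $F\boxtimes_T P$ is a retract of $F_X$, and hence is a finitely generated projective $F$-module.

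Now \cref{thm: Chan Wisdom} applies to the residue field $F$, which by hypothesis satisfies its conditions (in particular $G=C_{p^n}$). It gives that $F\boxtimes_T P\cong P/\mf{m}P$ is free over $F$. The preceding theorem then shows that $P$ is a free $T$-module.

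I expect no real obstacle beyond two points. The first is the bookkeeping that base change carries $T_X$ to $F_X$ and preserves retracts. The second is making explicit the multiplicatively cohomological hypothesis, on which the whole argument rests through Nakayama's lemma (\cref{thm:nakayama}).
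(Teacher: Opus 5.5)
This is the argument the paper intends. The paper records the corollary without proof, as the direct combination of \cref{thm: Chan Wisdom} (applied to the retract $F\boxtimes_T P$ of $F_X$) with the preceding theorem, and that is exactly what you carry out. You are also right that the multiplicatively cohomological hypothesis, which the statement omits, has to be assumed. For instance, take the local $C_p$-Tambara functor with $T(C_p/e)=\mathbb{Z}_{(p)}$, $T(C_p/C_p)=\mathbb{Z}_{(p)}\times\mathbb{Q}$, $\res(u,v)=u$, $\tr(z)=(pz,0)$, $\nm(z)=(z^p,z)$. Its residue field is the constant Tambara functor at $\mathbb{F}_p$, yet its summand with value $0\times\mathbb{Q}$ at $C_p/C_p$ and $0$ at $C_p/e$ is finitely generated projective but not free.
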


We give the example which are likely of the most interest, at least in equivariant stable homotopy theory. For a prime $p$, let $\ul{\bbZ}_p$ denote the constant $C_{p^n}$-Tambara functor at the $p$-adic integers.

\begin{theorem}
    A finitely generated $\ul{\bbZ}_p$-module is projective if and only if it is free.
\end{theorem}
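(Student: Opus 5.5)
The plan is to deduce the statement from the preceding corollary, which says that over a local Tambara functor whose residue field satisfies the hypotheses of \cref{thm: Chan Wisdom}, finitely generated projective modules are free. Implicit in that corollary is the theorem on local multiplicatively cohomological Tambara functors proved above, so I will check three things for $T=\ul{\bbZ}_p$ with $G=C_{p^n}$:
\begin{enumerate}
    \item $T$ is multiplicatively cohomological;
    \item $T$ is local, and I can identify its maximal ideal $\mf{m}$;
    \item the residue field $T/\mf{m}$ satisfies the two bullet conditions of \cref{thm: Chan Wisdom}.
\end{enumerate}
The converse direction, that free modules are projective, is already contained in the representability statement (\cref{prop: representability}): each $T_X$ is projective.

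For (1), all restrictions in $\ul{\bbZ}_p$ are identities. The norm $\nm^K_H$ is the $|K/H|$-th power map, so $\nm^K_H\res^K_H(x)=x^{|K/H|}$ and $T$ is multiplicatively cohomological. (Equivalently, restrictions are injective, so the remark after \cref{cor: jac of mc} applies.)

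For (2), I use \cref{prop: local characterization}. The ring $T(G/e)=\bbZ_p$ is local with unique maximal ideal $(p)$, which is $G$-fixed, so there is exactly one $G$-orbit of maximal ideals and $T$ is local. By \cref{lemma: maximal ideal looks like this}, $\mf{m}(G/K)=(\res^K_e)^{-1}((p))=(p)$ at every level. Hence the residue field $T/\mf{m}$ has value $\bbZ_p/p=\bbF_p$ at every level, with identity restrictions.

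For (3), I need to identify the induced operations on $T/\mf{m}$. Transfers are multiplication by $|K/H|$, which is $0$ mod $p$ for proper inclusions, and norms are Frobenius-type power maps. So $T/\mf{m}$ is the constant $C_{p^n}$-Tambara functor $\ul{\bbF}_p$. Each level is the field $\bbF_p$, and each restriction is the identity, which is trivially a finite extension. Thus the hypotheses of \cref{thm: Chan Wisdom} hold.

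Putting these together, the preceding corollary applies and gives the result. The only step with any content is confirming that the quotient $T/\mf{m}$ really is the levelwise quotient $\bbF_p$ with the stated operations. That follows because Tambara ideals are levelwise kernels, so quotienting is levelwise. I do not expect a genuine obstacle; the hard input is Nakayama's lemma (\cref{thm:nakayama}) and \cref{thm: Chan Wisdom}, both already available.
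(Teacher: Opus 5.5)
Your proposal is correct and follows the paper's proof essentially step for step. The paper cites the general fact that fixed point Tambara functors are multiplicatively cohomological, whereas you verify it directly from the power-map norms; otherwise both arguments deduce locality from the correspondence between maximal ideals and $G$-orbits of maximal ideals of $T(G/e)=\bbZ_p$, use \cref{lemma: maximal ideal looks like this} to get $\mf{m}(G/K)=p\bbZ_p$ at every level, and conclude that the residue field is the constant $\ul{\bbF}_p$, which satisfies the hypotheses of \cref{thm: Chan Wisdom}. You were also right to check the multiplicatively cohomological hypothesis explicitly, since the preceding corollary omits it from its statement even though its proof depends on it.
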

\begin{proof}
    Since every fixed point Tambara functor is multiplicatively cohomological, it suffices to prove that $\ul{\bbZ}_p$ is local with residue field satisfying the conditions of \cref{thm: Chan Wisdom}.  The first claim follows from \cref{cor: bij of max ideals}.  For the second claim, we claim that the maximal ideal $\mf{m}\subset T$ is given by $\mf{m}(G/H) = p\bbZ_p\subset \bbZ_p$ for each $H$.  This implies that the residue field is constant at $\bbF_p$, which satisfies the conditions of \cref{thm: Chan Wisdom}.  The claim is clear from \cref{lemma: maximal ideal looks like this}, the fact that each restriction map is an isomorphism, and the fact that $\mf{m}(G/e) = p\bbZ_p$ is the unique maximal ideal of $\bbZ_p$.
\end{proof}
\begin{remark}
    Of course, the same proof applies to the constant $C_{p^n}$-Tambara functor at any local ring.
\end{remark}

\printbibliography

\end{document}